\newtheorem{assumption}{Assumption}
\definecolor{bgcolor}{rgb}{0.8,1,1}
\definecolor{bgcolor2}{rgb}{0.8,1,0.8}
\definecolor{niceblue}{rgb}{0.0,0.19,0.56}
\definecolor{PineGreen}{RGB}{0,110,51}
\definecolor{BrickRed}{RGB}{143,20,2}
\newcommand{\R}{\mathbb{R}}
\newcommand{\eqdef}{\stackrel{\text{def}}{=}}
\def\<#1,#2>{\left\langle #1,#2\right\rangle}
\newcolumntype{Y}{>{\centering\arraybackslash}X}
\newcommand{\algname}[1]{{\sf  #1}\xspace}
\newcommand{\cO}{{\cal O}}
\newcommand{\EE}{\mathbb{E}}
\def\clip{\texttt{clip}}
\def\med{\texttt{Med}}
\def\batch{\texttt{BatchMed}}
\newlength{\dhatheight}
\def\la{\langle}
\def\ra{\rangle}
\newtheorem{theorem}{Theorem}
\newtheorem{lemma}{Lemma}
\newtheorem{proposition}{Proposition}
\newtheorem{remark}{Remark}
\newenvironment{proof}{%
  \par\noindent\textbf{Proof.}\quad%
}{%
  \hfill$\square$%
}
\begin{document}
\author{Nikita Kornilov \\MIPT, Skoltech
\And
Yuriy Dorn  \\
MSU AI Institute, MIPT, IITP RAS
\And
Aleksandr Lobanov \\
MIPT, ISP RAS
\And
Nikolay Kutuzov \\
RCenter for AI, MIPT 
\And
Innokentiy Shibaev \\
MIPT, IITP RAS
\And
Eduard Gorbunov \\
MBZUAI
\And
Alexander Nazin \\
ICS RAS, MIPT
\And
Alexander Gasnikov \\
Innopolis University, Steklov MI RAS, MIPT
}

\title{Median Clipping for Zeroth-order Non-Smooth Optimization and Multi-Armed Bandit}

\maketitle
\begin{abstract}
In this paper, we consider non-smooth convex optimization with a zeroth-order oracle corrupted by symmetric stochastic noise. Unlike the existing high-probability results requiring the noise to have bounded $\kappa$-th moment with $\kappa \in (1,2]$, our results allow even heavier noise with any $\kappa > 0$, e.g., the noise distribution can have unbounded expectation. Our convergence rates match the best-known ones for the case of the bounded variance, namely, to achieve function accuracy $\varepsilon$ our methods with Lipschitz oracle require  $\tilde{O}(d^2\varepsilon^{-2})$ iterations for any $\kappa > 0$. We build the median gradient estimate with bounded second moment as the mini-batched median of the sampled gradient differences. We apply this technique to the stochastic multi-armed bandit problem with heavy-tailed distribution of rewards and achieve $\tilde{O}(\sqrt{dT})$ regret. We demonstrate the performance of our zeroth-order and MAB algorithms for various $\kappa \in (0,2]$ on synthetic and real-world data. Our methods do not lose to SOTA approaches and dramatically outperform them for $\kappa \leq 1$.
\end{abstract}

\vspace{-0.2cm}
\section{Introduction}\label{sec:intro}
\vspace{-0.15cm}
During the recent few years, stochastic optimization problems with heavy-tailed noise received a lot of attention from many researchers. In particular, heavy-tailed noise is observed in various problems, such as the training of large language models \citep{brown2020language, zhang2020adaptive}, generative adversarial networks \citep{goodfellow2014generative, gorbunov2022clipped}, finance \citep{rachev2003handbook}, and blockchain \citep{wang2019flash}. The noise is called heavy-tailed if it has bounded $\kappa$-th moment for $\kappa \in (1,2]$. In particular case of $\kappa = 2$, it has bounded variance and considered to be light-tailed. 

One of the most popular techniques for handling heavy-tailed noise in theory and practice is  the gradient clipping \citep{gorbunov2020stochastic, cutkosky2021high, nguyen2023improved, puchkin2023breaking} which allows deriving high-probability bounds and considerably improves convergence even in case of light tails \citep{sadiev2023high}. For convex functions,  \citep{sadiev2023high, nguyen2023improved} show that first-order methods with clipping and properly adjusted clipping levels, stepsizes achieves the optimal sample complexity bound $\tilde{O}(\varepsilon^{-\frac{\kappa}{\kappa - 1}})$. In \citep{sadiev2023high}, the authors propose to use restarts to accelerate the methods in case of strongly convex functions, namely, they obtain optimal  $\tilde{O}(\varepsilon^{-\frac{\kappa}{2(\kappa - 1)}})$ rates.


However, most of the mentioned works focus on the gradient-based (first-order) methods. For some problems, e.g., the multi-armed bandit \citep{flaxman2004online,bartlett2008high, liu2011multi, bubeck2013bandits}, only losses or function values are available, and thus, zeroth-order algorithms are required. Stochastic zeroth-order optimization is being actively studied. For a detailed overview, see the recent survey \citep{gasnikov2022randomized} and the references
therein. The only existing works that handle heavy-tailed noise in convex zeroth-order optimization are \citep{kornilov2023gradient, kornilov2023accelerated} which combine clipping and gradient smoothing \citep{gasnikov2022power} techniques. The authors obtain optimal high-probability convergence for $d$-dimensional non-smooth convex  problems, i.e., function accuracy $\varepsilon$ is achieved in $\tilde{O} ((\sqrt{d} \varepsilon^{-1})^\frac{\kappa}{\kappa - 1}) \text{oracle calls}.$ These rates match the optimal rates for first-order optimization  \citep{gorbunov2020stochastic} in $\varepsilon$, however, they degenerate as $\kappa \to 1$, and the convergence is not guaranteed for $\kappa = 1$. The same is related to the degenerating $\tilde{O} ((d\varepsilon^{-1})^\frac{\kappa}{2(\kappa - 1)})$ rates for strongly convex functions.

In optimization literature \citep{jakovetic2023nonlinear, armacki2023high, armacki2024large, compagnoni2024adaptive, compagnoni2025unbiased}, it was observed that for particular class of \textit{symmetric} heavy-tailed noise the first-order methods do not suffer from small $\kappa \to 1$ and can even work under noises without finite math expectation.  For example, under symmetric (and close to symmetric) heavy-tailed noises, the degeneration issue can be handled via median estimates \citep{zhong2021breaking, puchkin2023breaking}, which are frequently used in robust mean estimation and robust machine learning \citep{lugosi2019mean}. In the case of first-order methods, the authors of \citep{puchkin2023breaking} combine clipping with median estimate and achieve better complexity guarantees $\tilde{O}(\varepsilon^{-2}/\kappa)$ and $\tilde{O}(\varepsilon^{-1}/\kappa)$ for convex and strongly convex functions, respectively. They also show that the narrowing of the distributions' class is essential for breaking the lower bounds. However, the possibility of application of the median estimates to the case of the zeroth-order optimization and multi-armed bandit remains open. In this paper, we address this question.


\subsection{Contributions}
\textbf{Theory I: novel oracle concept.} We propose our novel theoretical zeroth-order oracle (As.~\ref{as:oracle}) that allows us to incorporate fine-grained features of the noise probability distributions. We use it to successfully utilize symmetry of the heavy-tailed noise and dramatically improve current convergence results. \color{black}

\noindent \textbf{Theory II: zeroth-order optimization.} We propose our novel \algname{ZO-clipped-med-SSTM} ($\S$\ref{sec:SSTM}) for unconstrained optimization and \algname{ZO-clipped-med-SMD} ($\S$\ref{sec:SMD}) for optimization on convex compact which successfully incorporate median clipping technique. For any symmetric heavy-tailed noise with bounded $\kappa$-th moment $\kappa > 0$, our methods achieve not degenerating convergence rates with high-probability which match the optimal rates for ZO minimization under any noise with the bounded variance. For $\mu$-strongly convex functions, we use restart technique to accelerate our algorithms (Appendix \ref{par:Restarts}). In the Table~\ref{tab:res_uncon}, we provide convergence guarantees for the unconstrained case. 

\begin{table*}[t]
\centering
\footnotesize
\caption{Number of successive iterations to achieve a function's accuracy $\varepsilon$ with high probability; unconstrained optimization via Lipschitz oracle with bounded $\kappa$-th moment.  Constants $b, d, M'_2$ denote the batch size, dimensionality, and the Lipschitz constant of the oracle, respectively.} 

\label{tab:res_uncon}
\begin{tabular}{|l|ll|}
\hline

                 \multirow{2}*{Setup} & \multicolumn{1}{l|}{\algname{ZO-clipped-SSTM} \citep{kornilov2023accelerated}}      &  \color{niceblue} {Our \algname{ZO-clipped-med-SSTM}}   \\    
                  & \multicolumn{1}{l|}{ $\kappa > 1$,  $b$ oracle calls per iter.}      &  \color{niceblue} $\kappa > 0$, symmetric noise, $\frac{b}{\kappa}$ calls \\
                  \hline
Convex        & \multicolumn{1}{l|}{$\widetilde\cO\left( \max\left\{\frac{d^\frac14 M'_2 }{\varepsilon},\frac1b \left(\frac{\sqrt{d} M'_2 }{\varepsilon}\right)^{\frac{\kappa}{\kappa-1}} \right\}\right)$}      &        \cellcolor{bgcolor} \pbox{20cm}{$\widetilde{\cO}\left( \max\left\{\frac{d^\frac14 M'_2 }{\varepsilon},\frac1b  \left(\frac{dM'_2 }{ \varepsilon} \right)^2 \right\} \right) $ \\ \text{(Theorem \ref{thm:SSTM col})} }      \\ \hline
 \pbox{20cm}{$\mu$-str. \\ convex } &  \multicolumn{1}{l|}{$\widetilde\cO\left(\max\left\{\frac{d^\frac14 M'_2}{\varepsilon}, \frac1b  \left(\frac{d(M'_2)^2}{\mu\varepsilon}\right)^\frac{\kappa}{2(\kappa - 1)}\right\}\right)$ }      &  \cellcolor{bgcolor} \pbox{20cm}{$\widetilde\cO\left( \max\left\{\frac{d^\frac14 M'_2}{\varepsilon},\frac1b  \frac{d^2(M'_2)^2}{\mu\varepsilon}\right\}\right) $ \\ \text{(Theorem \ref{thm:R-med-SSTM cor})}}     \\ \hline
\end{tabular}
\end{table*}

\noindent \textbf{Theory III: Multi Armed Bandit.} We propose \algname{Clipped-INF-med-SMD} ($\S$\ref{sec:MAB}) for the stochastic multi-armed bandit (MAB) with symmetric heavy-tailed reward distribution. For MAB with $d$ arms and time interval $T$, in Theorem \ref{avrregretest}, we obtain the $\tilde{O}(\sqrt{dT})$ bound on the regret, which is optimal and matches the lower bound $\Omega (\sqrt{dT})$ for stochastic MAB with any reward distribution and bounded variance.  Moreover, this bound holds not only in expectation but with controlled large deviations.

\noindent \textbf{Practice.} We demonstrate in experiments ($\S$\ref{sec:Exp}) on extremely noised real and synthetic data superior performance of our methods in comparison with previously known SOTA approaches.  We compare our algorithms with previous approaches and discuss its limitations in   $\S$\ref{sec:limitations}. \color{black}

\section{Preliminaries}
\label{subsec:preliminaries}
In this section, we introduce general notations and assumptions on optimized functions. We also recall popular gradient smoothing and clipping techniques.

\subsection{Notations} For vector $x \in \R^d$ and $p \in [1,2]$, we define $\ell_p$-norm by $\|x\|_p \eqdef \left( \sum\limits_{i=1}^d |x_i|^p\right)^\frac1p$ and its dual norm by $\|x\|_q$, where $\frac1p + \frac1q = 1$. If $q = \infty$, we define $\|x\|_\infty = \max\limits_{i = 1, \dots, d} |x_i|$. We denote the Euclidean ball of radius $R$ and center~$c$: $B_R(c) \eqdef \{ x \in \R^d : \| x - c \|_2 \leq R \}$, the Euclidean sphere: $S_R(c) \eqdef \{ x \in \R^d : \| x - c\|_2 = R \}$ and the probability simplex: $\Delta_+^d \eqdef \{x \in \R^d_+: \sum_{i=1}^d x_i = 1\}$.

Median operator $\text{Median}(\{a_i\}_{i=1}^{2m+1})$ applied to the elements sequence of the odd size $2m~+~1, m \in \mathbb{N}$  returns $m$-th order statistics. We use short notation $a \lor b \eqdef \max(a, b)$.

\subsection{Assumptions}
We consider a non-smooth convex optimization problem on a convex set $Q \subseteq \R^d$:
\begin{equation}\label{eq:min_problem}
    \min\limits_{x\in Q} f(x).
\end{equation}
 A point $x^*$ denotes one of the problem's solutions.
\begin{assumption}[Convexity]\label{as:f convex}
The function $f: Q \to \R$ is $\mu$-strongly convex on $Q \subseteq \R^d$, if there exists a constant $\mu \geq 0$ such that for all $x_1, x_2 \in Q$ and $\lambda \in [0,1]:$
\begin{eqnarray}
    f(\lambda x_1 + (1 - \lambda) x_2 ) &\leq& \lambda f(x_1)
    + (1- \lambda) f(x_2) 
    - \frac{1}{2} \mu \lambda (1 - \lambda) \|x_1  -x_2\|_2^2, \notag
\end{eqnarray}
If $\mu=0$ we say that the function is just “convex”.
\end{assumption}

\begin{assumption}[Lipschitz continuity]\label{as:Lipshcitz}
The function $f: Q \to \R$ is $M_2$-Lipschitz continuous on $Q \subseteq \R^d$, if there exists a constant $M_2 > 0$ such that for all  $x_1, x_2 \in Q$:
\[ |f(x_1) - f(x_2)| \leq M_2 \|x_1 - x_2\|_2.\]
\end{assumption}
If a differentiable function has $L$-Lipschitz gradient, we call it $L$-smooth.

\subsection{Randomized smoothing}
The main scheme that allows us to develop gradient-free
methods for non-smooth convex problems 
is randomized smoothing \cite{ermoliev1976stochastic, nemirovskij1983problem, spall2005introduction, nesterov2017random}.
  For the non-smooth function $f:Q + B_{2\tau}(0) \to \R$, we build the smooth approximation $\hat{f}_\tau: Q \to \R$  with the smoothing parameter $\tau > 0$:
\begin{equation}\label{hat_f}
    \hat{f}_\tau (x) \overset{\text{def}}{=} \EE_{\mathbf{u}\sim  U(B_1(0))} [f(x + \tau \mathbf{u})],
\end{equation}
where $\mathbf{u}$  is a random vector uniformly distributed on the Euclidean unit ball. We define the function $f$ on a slightly larger set $Q + B_{2\tau}(0)$ to be able to compute $\hat{f}_\tau$ on the whole $Q.$
 
If the function $f$ is $\mu$-strongly convex (As. \ref{as:f convex}) and $M_2$-Lipschitz (As. \ref{as:Lipshcitz}), then the smoothed function $\hat{f}_\tau$ is $\mu$-strongly convex and \textit{$\nicefrac{\sqrt{d} M_2}{\tau}$-smooth}. Moreover, it does not differ from the original $f$ too much. These results are formally presented in Lemma \ref{lem:hat_f properties}.
\begin{lemma}[\citep{gasnikov2022power}, Theorem 2.1]\label{lem:hat_f properties}
Consider $\mu$-strongly convex (As. \ref{as:f convex}) and $M_2$-Lipschitz (As. \ref{as:Lipshcitz}) function $f$ on $Q + B_{2\tau}(0) \subseteq \R^d$. For the smoothed function $\hat{f}_\tau$ defined in \eqref{hat_f}, the following properties hold true:
\begin{enumerate}
\item The function $\hat{f}_\tau$ is $M_2$-Lipschitz on $Q$ and satisfies the inequality 
\begin{equation}
    \sup \limits_{x \in Q} |\hat{f}_\tau(x) - f(x)| \leq \tau M_2. \label{eq:sup tauM_2}
\end{equation}
    \item 
The function $\hat{f}_\tau$ is differentiable on $Q$ with the following gradient:
$$\nabla \hat{f}_\tau (x) = \EE_{\mathbf{e}\sim U(S_1(0))}\left[\frac{d}{\tau} f(x + \tau \mathbf{e}) \mathbf{e}\right], \quad x \in Q,$$
where $\mathbf{e}$ is a random vector uniformly distributed on the unit Euclidean sphere. 
\item The function $\hat{f}_\tau$ is $L$-smooth on $Q$ with $L = \nicefrac{\sqrt{d} M_2}{\tau}$.
\end{enumerate}
\end{lemma}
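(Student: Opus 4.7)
The plan is to prove the three claims in order and to lean on Assumptions~\ref{as:f convex}--\ref{as:Lipshcitz} to justify passing derivatives and bounds through the expectation.

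For part~1, I would start with the triangle inequality for expectations: for $x_1, x_2 \in Q$,
\begin{equation*}
|\hat{f}_\tau(x_1) - \hat{f}_\tau(x_2)| \leq \EE_{\mathbf{u}\sim U(B_1(0))}\bigl[|f(x_1 + \tau\mathbf{u}) - f(x_2 + \tau\mathbf{u})|\bigr] \leq M_2 \|x_1 - x_2\|_2,
\end{equation*}
and $\mu$-strong convexity of $\hat f_\tau$ follows by inserting a convex combination inside the expectation. The inequality \eqref{eq:sup tauM_2} is the same argument applied to $|\hat{f}_\tau(x) - f(x)| \leq \EE_{\mathbf{u}}[M_2 \tau \|\mathbf{u}\|_2] \leq \tau M_2$, using $\|\mathbf{u}\|_2\leq 1$. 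These are routine.

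For part~2, I would change variables $y = x + \tau u$ to rewrite $\hat f_\tau(x) = \frac{1}{\tau^d |B_1(0)|}\int_{B_\tau(x)} f(y)\,dy$, and then apply the divergence theorem:
\begin{equation*}
\nabla_x \int_{B_\tau(x)} f(y)\,dy = \int_{S_\tau(x)} f(y)\, \tfrac{y-x}{\|y-x\|_2}\, d\sigma(y) = \tau^{d-1}|S_1(0)|\,\EE_{\mathbf{e}\sim U(S_1(0))}\bigl[f(x+\tau\mathbf{e})\mathbf{e}\bigr].
\end{equation*}
Combining with $|S_1(0)|/|B_1(0)| = d$ yields the advertised formula. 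Since $f$ is only Lipschitz, not $C^1$, the divergence step is not directly applicable; the technical fix is to mollify $f$ by a standard smooth kernel at scale $\varepsilon \ll \tau$ (the enlargement $Q + B_{2\tau}(0)$ leaves room for this), apply the identity to the mollification, and pass $\varepsilon \to 0$. Uniform convergence (granted by Lipschitz continuity) lets us commute the limit with the sphere integral.

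Part~3 is where the dimension-$\sqrt{d}$ sharpening appears, and I expect it to be the main obstacle. Writing $g(\mathbf{e}) \eqdef f(x_1+\tau\mathbf{e}) - f(x_2+\tau\mathbf{e})$, the gradient formula from part~2 gives
\begin{equation*}
\nabla \hat f_\tau(x_1) - \nabla \hat f_\tau(x_2) = \tfrac{d}{\tau}\,\EE_{\mathbf{e}}\bigl[g(\mathbf{e})\mathbf{e}\bigr].
\end{equation*}
The naive bound $\|\EE[g(\mathbf{e})\mathbf{e}]\|_2 \leq \EE[|g(\mathbf{e})|]\leq M_2\|x_1-x_2\|_2$ would only deliver $L = dM_2/\tau$, which is off by the decisive $\sqrt{d}$ factor. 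The key trick is to dualize and then use Cauchy--Schwarz together with isotropy of $\mathbf{e}$: for every unit vector $v$, $\EE[\langle \mathbf{e}, v\rangle^2] = 1/d$, so
\begin{equation*}
\|\EE[g(\mathbf{e})\mathbf{e}]\|_2 = \sup_{\|v\|_2=1}\EE[g(\mathbf{e})\langle\mathbf{e}, v\rangle] \leq \sqrt{\EE[g(\mathbf{e})^2]}\cdot \sup_{\|v\|_2=1}\sqrt{\EE[\langle \mathbf{e}, v\rangle^2]} \leq \tfrac{M_2\|x_1-x_2\|_2}{\sqrt{d}}.
\end{equation*}
Multiplying by $d/\tau$ yields $L = \sqrt{d}M_2/\tau$, as claimed. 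All other pieces follow from standard manipulations; it is this isotropy-plus-Cauchy--Schwarz trade-off that drives the stated smoothness constant.
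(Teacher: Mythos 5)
Your proof is correct, and it is essentially the standard argument behind the cited result: the paper itself does not prove this lemma but imports it from \citep{gasnikov2022power}, and your route (triangle inequality under the expectation for part 1, the divergence theorem with a mollification step to justify it for merely Lipschitz $f$ in part 2, and dualization plus Cauchy--Schwarz with the isotropy identity $\EE[\langle \mathbf{e}, v\rangle^2] = \nicefrac{1}{d}$ to obtain the $\sqrt{d}$ factor in part 3) is the same mechanism used there. Your side remark on strong convexity of $\hat{f}_\tau$ also matches the paper's separate proposition in the appendix, which proves it exactly by inserting the convex combination inside the expectation.
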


\subsection{Clipping} To handle heavy-tailed noise, we use a clipping technique which clips tails of gradient's distribution. For the clipping level $\lambda >0$ and $\ell_q$-norm, where $q \in [2, +\infty]$, we define the clipping operator $\clip$ for arbitrary non-zero gradient vector $g \in \R^d$ as follows: 
$$\clip_q\left(g, \lambda\right) = \frac{g}{\|g\|_q} \min\left(\|g\|_q, \lambda\right).
$$ 
\section{Zeroth-order optimization with symmetric heavy-tailed noise} \label{sec:Main_Results}

In this section, we present novel algorithms for zeroth-order optimization with independent and Lipschitz oracles. In \S\ref{sec:ZO theory}, we introduce the problem, symmetric heavy-tailed noise assumptions and median estimation with its properties. In \S\ref{sec:SSTM}, we propose our accelerated batched \algname{ZO-clipped-med-SSTM} for unconstrained problems. In \S\ref{sec:SMD}, we describe \algname{ZO-clipped-med-SMD} for problems on convex compacts. All proofs are located in Appendix \ref{sec:proofs}.

\subsection{New zeroth-order noise concept and integration in median estimation}\label{sec:ZO theory}

 \subsubsection{Zeroth-order two-point oracle}
 In zeroth-order setup, the optimization \eqref{eq:min_problem} is performed only by accessing the pairs of function evaluations rather than sub-gradients.

 For any two points $x, y$, an oracle returns the pair of the scalar values $f(x, \xi)$ and $f(y, \xi)$, which are noised evaluation of real values $f(x)$ and $f(y).$ Moreover, noised values have the same realization of the stochastic variable $\xi$ and can be written as  
$$f(x, \xi) - f(y, \xi) = f(x) - f(y)  + \phi(\xi|x,y),$$
where $\phi(\xi| x,y)$ is the stochastic noise, which distribution depends on  $x,y$.

\subsubsection{Symmetric heavy-tailed noise}  We propose our novel assumption on distribution of $\phi(\xi| x,y)$, induced by a random variable $\xi$. It allows us to introduce noise symmetry and heavy tails. 



\begin{assumption}[Symmetric noise distribution]\label{as:dist}
\textbf{I. Symmetry.} For any two points $x,y \in \R^d$,  noise $\phi(\xi| x,y)$ has  symmetric density $p(u|x,y)$, i.e. $p(u|x,y) = p(-u|x,y), \forall u \in \R$. 

\noindent\textbf{II. Heavy tails.} We assume that there exist $\kappa > 0$,  $\gamma > 0$ and scale function $B(x,y): \R^d \times \R^d \to \R$, such that   $\forall u\in \R$ holds
\begin{equation}\label{eq:dist_ass}
    p(u|x,y) \leq \frac{ \gamma^\kappa \cdot | B(x,y)|^{\kappa}}{|B(x,y)|^{1+\kappa} + |u|^{1+\kappa}}.
\end{equation}
We consider two possible oracles:

\textbf{Independent oracle}: $\phi(\xi|x, y)$ distribution doesn't depend on points $x,y$: \begin{equation}\label{eq:Tsubakov}
    \gamma \cdot B(x,y) \equiv \Delta.
\end{equation}
    
\textbf{Lipschitz oracle}: $\phi(\xi|x, y)$ distribution becomes more concentrated around $0$ as points $x$ and $y$ become closer:
\begin{equation}\label{eq:noise_lip}
    |\gamma \cdot B(x,y)| \leq  \Delta \cdot \|x-y\|_2,
\end{equation}
where $\Delta > 0$ is the noise Lipschitz constant.
\end{assumption}
This assumption covers a majority of symmetric absolutely continuous distributions with bounded up to $\kappa$-th moments. If $\xi$ has Cauchy distribution, then one can use  (Remark \ref{rem:examples}):
        \begin{itemize}
            \item Independent oracle: $f(x, \xi) = f(x) + \xi_x, f(y,  \xi) = f(y) + \xi_y$ with independent realizations $\xi_x, \xi_y$.  

        \item Lipschitz oracle: $f(x, \boldsymbol{\xi})\!=\! f(x) + \langle \boldsymbol{\xi}, x \ra , f(y, \boldsymbol{\xi}) \!=\! f(y) + \langle \boldsymbol{\xi}, y \ra$, where $\boldsymbol{\xi}$ is $d$-dimensional random vector. \!\!Oracle gives the same realization of $\boldsymbol{\xi}$ for $x$ and $y$.   
    \end{itemize}

 \noindent \textbf{Comparison with previous oracles.} Our Assumption \ref{as:dist} is quite different from the standard ones from \citep{Dvinskikh_2022, kornilov2023accelerated}.  We make our assumption on variable $\phi(\xi|x, y)$ with fixed $x,y$. It allows us to set and use fine-grained properties of the noise, e.g., symmetry or heavy tails of particular type \eqref{eq:dist_ass}. In \citep{kornilov2023accelerated}, the authors fix $\xi$ and make assumption on $x,y$. Hence, they can not access the distribution of the noise and use only the fact of having bounded $\kappa$-th moment. When $\kappa \in (1,2]$, Assumption \ref{as:dist} can be reduced to the standard one with the same constant, see Remark \ref{rem:diff_as}. 

We would like to highlight the fact that the common proof techniques from previous works can not be trivially generalized to apply symmetry without our novel assumption. For example, the proof of median estimator's properties Lemma \ref{lem:batch median properties} is based on completely different approach. 
We refer to Appendix \ref{par: remarks on as} for more details and the intuition behind Assumption~\ref{as:dist}. \color{black}


\subsubsection{Median estimation} In our pipeline, instead of minimizing the non-smooth function $f$ directly, we propose to minimize the smooth approximation $\hat{f}_\tau$ with the fixed smoothing parameter $\tau$ via first-order methods.  Following \eqref{eq:sup tauM_2}, the solution for $\hat{f}_\tau$  is also approximate minimizer of $f$ when $\tau$ is sufficiently small. Following \citep{shamir2017optimal}, the gradient of $\hat{f}_\tau$ can be estimated by:
\begin{eqnarray}
g(x, \mathbf{e}, \xi) &=& \frac{d}{2\tau}(f(x + \tau \mathbf{e}, \xi) - f(x - \tau \mathbf{e}, \xi)) \mathbf{e}  \label{grad estimation} \\
&=&  \frac{d}{2\tau}(f(x + \tau \mathbf{e}) - f(x - \tau \mathbf{e})  + \phi(\xi|x + \tau \mathbf{e},x - \tau \mathbf{e})) \mathbf{e}, \notag 
\end{eqnarray}
where $\mathbf{e} \sim U(S_1(0))$ is 
a random vector uniformly distributed on the Euclidean unit sphere. Moreover, $\mathbf{e}, \xi$ are independent of each other conditionally on $x$. However, the noise $\phi$ might have unbounded first and second moments. To fix this, we lighten tails of $\phi$ to obtain an unbiased estimate of $\nabla \hat{f}_\tau$. For a point $x$, we apply the component-wise median operator to $2m + 1$ samples  $\{g(x, \mathbf{e}, \xi^i)\}_{i=1}^{2m + 1}$  with independent $\xi^i$ and the same $x $ and $\mathbf{e}$:
\begin{equation} \label{eq:med estimation}
    \med^m(x, \mathbf{e}, \{\xi\}) \eqdef \texttt{Median}(\{g(x, \mathbf{e},  \xi^i)\}_{i=1}^{2m+1}).
\end{equation}
The median operator can be applied to the batch of $\{\mathbf{e}^j\}_{j=1}^b$ with further averaging:
\begin{equation}\label{eq:batch med estimation}
    \batch^{m}_b (x, \{\mathbf{e}\}, \{\xi\}) \eqdef \frac{1}{b} \sum_{j=1}^b \texttt{Med}^m(x, \mathbf{e}^j, \{\xi\}^j).
\end{equation}
 For a large enough number of samples, the median estimates have the bounded second moment. The proof of Lemma~\ref{lem:batch median properties} can be found in Appendix \ref{sec:lemma}.
\begin{lemma}[Median estimate's properties]\label{lem:batch median properties}
Consider $M_2$-Lipschitz (As. \ref{as:Lipshcitz}) function $f$ on $Q + B_{2\tau}(0)$ with oracle corrupted by noise under As. \ref{as:dist} with $\Delta$ and $\kappa > 0$. If median size $m > \frac{2}{\kappa}$ with norm $q \in [2, +\infty]$, then the median estimates \eqref{eq:med estimation} and \eqref{eq:batch med estimation} are unbiased on $Q$:
\begin{equation*}
      \EE_{ \mathbf{e}, \xi}[\batch^{m}_b (x, \{\mathbf{e}\}, \{\xi\})]  = \nabla \hat{f}_\tau(x), 
\end{equation*}
and have bounded second moment, i.e.,
\begin{eqnarray}
    \EE_{\mathbf{e}, \xi}[\|\texttt{BatchMed}^{m}_b (x, \{\mathbf{e}\}, \{\xi\}) -\nabla \hat{f}_\tau(x)\|_2^2] &\leq&  \frac{ \sigma^2}{b},\notag \end{eqnarray}
    \begin{eqnarray}
   \EE_{\mathbf{e}, \xi}[\|\texttt{Med}^{m}(x, \mathbf{e}, \{\xi\}) -\nabla \hat{f}_\tau(x)\|_q^2] \leq   \sigma^2a_q^2, \notag \\
   a_q = d^{\frac1q - \frac12} \min\{\sqrt{32\ln d - 8}, \sqrt{2q - 1}\}. \label{eq:lemma_bounded_mom}
    \end{eqnarray}
For \textbf{independent oracle}, we have $ \sigma^2 = 8d  M_2^2 +  2\left(\frac{d \Delta}{\tau}\right)^2 (2m+1)\left(\frac{4}{\kappa} \right)^{\frac2\kappa},$
and, for \textbf{Lipschitz oracle}, we have  $ \sigma^2 = 8d  M_2^2 +  (16m+8) d^2\Delta^2  \left(\frac{4}{\kappa} \right)^{\frac2\kappa} .$
\end{lemma}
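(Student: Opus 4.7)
\textbf{Conditioning on $\mathbf{e}$ and reduction to a scalar median.} The plan is to exploit the crucial structural observation that, for any fixed $\mathbf{e}$, every sample has the form
\[
g(x,\mathbf{e},\xi^i) \;=\; \Bigl(A(\mathbf{e}) + \tfrac{d}{2\tau}\phi^i\Bigr)\mathbf{e}, \qquad A(\mathbf{e}) := \tfrac{d}{2\tau}\bigl(f(x+\tau\mathbf{e}) - f(x-\tau\mathbf{e})\bigr),
\]
where $A(\mathbf{e})$ is deterministic given $\mathbf{e}$ and $\phi^i := \phi(\xi^i\mid x+\tau\mathbf{e}, x-\tau\mathbf{e})$ are i.i.d.\ scalar symmetric noises. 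All $2m+1$ samples thus lie on the single line $\mathbb{R}\mathbf{e}$, so for each coordinate $k$ the scalar median commutes with multiplication by $e_k$ (which is valid for odd samples regardless of sign), giving $\med^m(x,\mathbf{e},\{\xi\}) = (A(\mathbf{e}) + \tfrac{d}{2\tau}N)\mathbf{e}$, where $N := \texttt{Median}(\phi^1,\ldots,\phi^{2m+1})$ is a scalar statistic. This alignment is the whole reason the proof works and is where our tailored Assumption~\ref{as:dist} enters.

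\textbf{Unbiasedness.} By symmetry of the noise density, $(-\phi^1,\ldots,-\phi^{2m+1})$ has the same law as $(\phi^1,\ldots,\phi^{2m+1})$, and since the odd-size median is an odd function of its inputs, the laws of $N$ and $-N$ coincide. Once $\EE|N|<\infty$ is verified (which holds under $m>2/\kappa$ via the tail analysis below), this forces $\EE_\xi[N\mid\mathbf{e}]=0$, and hence $\EE_\xi[\med^m\mid\mathbf{e}] = A(\mathbf{e})\mathbf{e}$. Taking expectation over $\mathbf{e}$ and applying the standard symmetrization $\mathbf{e}\leftrightarrow -\mathbf{e}$ reduces $\EE_\mathbf{e}[A(\mathbf{e})\mathbf{e}]$ to $\EE_\mathbf{e}[\tfrac{d}{\tau}f(x+\tau\mathbf{e})\mathbf{e}] = \nabla\hat f_\tau(x)$ by Lemma~\ref{lem:hat_f properties}. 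Unbiasedness of $\batch^m_b$ is then immediate from averaging independent copies.

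\textbf{Second-moment bound.} The main technical work is to control $\EE[N^2]$ uniformly over $\mathbf{e}$. I will combine the order-statistic tail inequality
\[
P(|N|>u) \;\leq\; \binom{2m+1}{m+1}\,P(|\phi|>u)^{m+1}
\]
with the tail estimate $P(|\phi|>u) \le \tfrac{2|\gamma B|^\kappa}{\kappa u^\kappa}$ obtained by integrating the density bound \eqref{eq:dist_ass}. Writing $\EE[N^2] = \int_0^\infty 2u\, P(|N|>u)\,du$ and splitting at a cutoff $u_0$ proportional to $|\gamma B|(4/\kappa)^{1/\kappa}$ (the natural scale at which the power-tail bound drops below $1$), the trivial estimate $P\le 1$ handles $u<u_0$ while the combined tail bound handles $u>u_0$; the condition $m>2/\kappa$ is exactly what makes the upper integral finite. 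A careful choice of $u_0$ and elementary control of $\binom{2m+1}{m+1}\le 2^{2m+1}$ yield $\EE[N^2]\le 8(2m+1)(4/\kappa)^{2/\kappa}|\gamma B|^2$. Substituting $|\gamma B|=\Delta$ (independent oracle) or $|\gamma B|\le 2\tau\Delta$ (Lipschitz oracle) and multiplying by $(d/(2\tau))^2$ produces the noise terms of $\sigma^2$. The deterministic contribution $\EE[\|A(\mathbf{e})\mathbf{e}-\nabla\hat f_\tau\|_2^2]\le 8dM_2^2$ is the standard Shamir-type bound for the two-point estimator of an $M_2$-Lipschitz function (using $\EE_\mathbf{e}\langle v,\mathbf{e}\rangle^2 = \|v\|_2^2/d$ applied along a Taylor path between $x-\tau\mathbf{e}$ and $x+\tau\mathbf{e}$).

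\textbf{Upgrading to $\|\cdot\|_q$ and batching.} Because $\med^m - A(\mathbf{e})\mathbf{e} = \tfrac{d}{2\tau}N\mathbf{e}$ is aligned with $\mathbf{e}$, one has $\|\med^m - A(\mathbf{e})\mathbf{e}\|_q = \tfrac{d}{2\tau}|N|\cdot\|\mathbf{e}\|_q$. Integrating against the uniform measure on $S_1(0)$ and using the known sphere concentration bound $\EE[\|\mathbf{e}\|_q^2]\le d^{2/q-1}\min\{32\ln d-8,\,2q-1\} = a_q^2$ upgrades the $\|\cdot\|_2$ estimate to the displayed $\|\cdot\|_q$ bound. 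The $1/b$ improvement for $\batch^m_b$ then follows at once from independence of the $b$ summands. The principal obstacle is the tail integration of Step~3: extracting the explicit prefactor $(4/\kappa)^{2/\kappa}$ requires balancing the two pieces of the truncated integral simultaneously with the binomial blowup, and this is precisely the step that pins down the threshold $m>2/\kappa$.
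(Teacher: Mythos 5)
Your proposal is sound and shares the paper's key structural steps (all $2m+1$ samples are collinear with $\mathbf{e}$, so the vector median reduces to a scalar median $N$ of the noises; symmetry gives $\EE[N]=0$ and hence unbiasedness via Lemma~\ref{lem:hat_f properties}; the $\|\cdot\|_q$ upgrade uses $\EE\|\mathbf{e}\|_q^2\le a_q^2$; batching gives the $1/b$). Where you genuinely diverge is the bound on $\EE[N^2]$: you use the binomial tail inequality for the median, $P(|N|>u)\lesssim\binom{2m+1}{m+1}P(|\phi|>u)^{m+1}$, together with layer-cake integration split at a cutoff $u_0$, whereas the paper writes the exact density of the $(m+1)$-th order statistic, $(2m+1)\binom{2m}{m}P(t)^m(1-P(t))^mp(t)$, and bounds $\sup_t t^2P(t)^m(1-P(t))^m$ using $P(1-P)\le\tfrac14$ together with the one-sided tail estimates, then cancels $\binom{2m}{m}\le 4^m$ against the $4^{-m}$. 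Both routes are valid; your tail route is arguably more elementary and in fact only needs $(m+1)\kappa>2$ for convergence, slightly weaker than the paper's working condition $m\kappa>2$.

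One caveat on constants: as written, your two-sided tail bound $P(|\phi|>u)\le\frac{2|\gamma B|^\kappa}{\kappa u^\kappa}$, raised to the power $m+1$ and combined with $\binom{2m+1}{m+1}\le 2^{2m+1}$, forces the balancing cutoff up to $u_0\approx(8/\kappa)^{1/\kappa}|\gamma B|$, so the resulting bound carries an extra factor of order $2^{2/\kappa}$ that cannot be absorbed into $8(2m+1)$ when $\kappa$ is small; your claimed $\EE[N^2]\le 8(2m+1)(4/\kappa)^{2/\kappa}|\gamma B|^2$ therefore does not follow from the inequalities exactly as you stated them. The fix is easy and stays within your scheme: use symmetry to count exceedances on one side only, i.e.\ $P(|N|>u)\le 2\binom{2m+1}{m+1}P(\phi>u)^{m+1}$ with the one-sided tail $P(\phi>u)\le\frac{|\gamma B|^\kappa}{\kappa u^\kappa}$; then the cutoff where the tail bound reaches $1$ satisfies $u_0\le(4/\kappa)^{1/\kappa}|\gamma B|$, the remaining integral contributes at most $\frac{2}{\kappa(m+1)-2}u_0^2\le m\,u_0^2$, and you land within the paper's $(2m+1)(4/\kappa)^{2/\kappa}|\gamma B|^2$, which after multiplying by $(d/2\tau)^2$, the $a_q^2$ factor, the split between the deterministic and noise parts, and the centering step reproduces the stated $\sigma^2$ for both oracles. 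The paper's $\sup$-of-density argument sidesteps this subtlety automatically because $P(t)(1-P(t))$ involves both tails multiplicatively.
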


\subsection{Our \algname{ZO-clipped-med-SSTM} for unconstrained problems}\label{sec:SSTM}
We present our novel \algname{ZO-clipped-med-SSTM} that works on the whole space~$\R^d$. We base it on the first-order accelerated clipped Stochastic Similar Triangles Method (\algname{clipped-SSTM}) with the optimal high-probability complexity
bounds from \citep{gorbunov2020stochastic}. Namely, we use its zeroth-order version \algname{ZO-clipped-SSTM} from \citep{kornilov2023accelerated} with the batched median estimation \eqref{eq:batch med estimation}.  The pseudocode is presented in Algorithm \ref{alg:clipped-SSTM}.
\begin{algorithm}[ht!]
\caption{\algname{ZO-clipped-med-SSTM} }
\label{alg:clipped-SSTM}   
\begin{algorithmic}[1]
\REQUIRE Starting point $x^0 \in \R^d$, number of iterations $K$, median size $m$,  batch size $b$, stepsize  $a>0$, smoothing parameter $\tau$, clipping levels $\{\lambda_k\}_{k=0}^{K-1}$.
\STATE Set  $L = \nicefrac{\sqrt{d} M_2}{\tau}$, ~ $A_0 = \alpha_0 = 0$,~ $y^0 = z^0 = x^0.$
\FOR{$k=0,\ldots, K-1$}
\STATE Set $\alpha_{k+1} = \nicefrac{(k+2)}{2aL}, \quad A_{k+1} = A_k + \alpha_{k+1}.$
\STATE $   x^{k+1} = \frac{A_k y^k + \alpha_{k+1} z^k}{A_{k+1}}. $
\STATE Sample sequences $\{\mathbf{e}\} \sim U(S_1(0))$  and $\{\xi\}$ . 
\STATE  \label{eq:SSTM_get_oracle} $g_{med}^{k+1} = \batch^{m}_b (x^{k+1}, \{\mathbf{e}\}, \{\xi\})$. 
\STATE $  z^{k+1} = z^k - \alpha_{k+1} \cdot \clip_2\left( g_{med}^{k+1}, \lambda_{k+1}\right).$
\STATE $   y^{k+1} = \frac{A_k y^k + \alpha_{k+1} z^{k+1}}{A_{k+1}}.$
\ENDFOR
\ENSURE $y^K$ 
\end{algorithmic}
\end{algorithm}


\begin{theorem}[Convergence of \algname{ZO-clipped-med-SSTM}]\label{thm:SSTM col}
    Consider initial point $x_0$ and distance $R = \|x^0 - x^*\|_2$. Let the function $f$ be a convex (As. \ref{as:f convex}) and $M_2$-Lipschitz (As. \ref{as:Lipshcitz}) function on $B_{3R +2\tau}(x^*)$ with two-point oracle corrupted by noise under As. \ref{as:dist} with $\Delta$ and $\kappa > 0$. We set batch size $b$ and median size $m = \frac{2}{\kappa} + 1$. To achieve function accuracy $\varepsilon$, i.e., $f(y^K) - f(x^*) \leq \varepsilon$  with probability at least $1-\beta$   via \algname{ZO-clipped-med-SSTM} with parameters $A = \ln \nicefrac{4K}{\beta}\geq 1$,
$a = \Theta(\min\{A^2, \nicefrac{\sigma K^{2}\sqrt{A}\tau}{\sqrt{bd}M_2R}\}), \lambda_k = \Theta(\nicefrac{R}{(\alpha_{k+1}A)})$ and  smoothing parameter $\tau = \frac{\varepsilon}{4M_2}$,  the number of iterations $K$ must be 
    \begin{eqnarray}\label{eq:SSTM conv iter num lip}
        &&\widetilde{\cO}\left( \frac{d^\frac14 M_2 R}{\varepsilon} \lor  \frac{(\sqrt{d} M_2 R)^2}{b \cdot  \varepsilon^2}  \left( 1  \lor \left(\frac{4}{\kappa} \right)^\frac2\kappa\!\! \frac{d\Delta^2}{\varepsilon^2}\right)\right), \notag \\
        &&\widetilde{\cO}\left( \max\!\left\{\frac{d^\frac14 M_2 R}{\varepsilon},  \frac{d(M_2^2\!\!+\!\!d \Delta^2/\kappa^{\frac{2}{\kappa}})R^2}{b \cdot  \varepsilon^2}  \right\} \right),\notag
    \end{eqnarray}
for \textbf{independent} and \textbf{Lipschitz} oracle, respectively.
 Each iteration requires $(2m + 1) \cdot b $  oracle calls. Moreover, with probability at least  $ 1-\beta$, the iterates of \algname{ZO-clipped-med-SSTM} remain in the ball: $\{x^k\}_{k=0}^{K+1}, \{y^k\}_{k=0}^{K}, \{z^k\}_{k=0}^{K} \subseteq B_{2R}(x^*)$.
\end{theorem}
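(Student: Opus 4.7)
The plan is to reduce the analysis to a standard high-probability convergence result for clipped-SSTM applied to the smoothed surrogate $\hat{f}_\tau$ with a bounded-variance stochastic oracle, exploiting Lemma \ref{lem:batch median properties} to turn the heavy-tailed noise into a controlled second-moment quantity.

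First, I would invoke Lemma \ref{lem:hat_f properties} to observe that $\hat{f}_\tau$ is convex, $M_2$-Lipschitz, and $L$-smooth with $L = \sqrt{d}M_2/\tau$ on the enlarged ball $B_{3R + 2\tau}(x^*)$, and satisfies $\sup|\hat{f}_\tau - f| \leq \tau M_2$. Then I would apply Lemma \ref{lem:batch median properties} with median order $m = 2/\kappa + 1 > 2/\kappa$: conditionally on $x^{k+1}$, the batched median $g_{med}^{k+1} = \batch^{m}_b(x^{k+1}, \{\mathbf{e}\}, \{\xi\})$ is an unbiased estimator of $\nabla \hat{f}_\tau(x^{k+1})$ with $\mathbb{E}\|g_{med}^{k+1} - \nabla \hat{f}_\tau(x^{k+1})\|_2^2 \leq \sigma^2/b$, where $\sigma^2$ depends on the oracle type as stated in Lemma \ref{lem:batch median properties}. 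This exactly matches the oracle requirements of first-order clipped-SSTM.

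Next, I would invoke the high-probability convergence analysis of clipped-SSTM for convex $L$-smooth objectives with unbiased bounded-variance gradients (\citep{gorbunov2020stochastic}, and its zeroth-order adaptation in \citep{kornilov2023accelerated}): with clipping levels $\lambda_k = \Theta(R/(\alpha_{k+1}A))$, parameter $A = \ln(4K/\beta)$, and stepsize $a = \Theta(\min\{A^2, \sigma K^{2}\sqrt{A}\tau/(\sqrt{bd}M_2 R)\})$, with probability at least $1 - \beta$ we have
\begin{equation*}
\hat{f}_\tau(y^K) - \hat{f}_\tau(x^*) \lesssim \frac{L R^2}{K^2} \cdot \mathrm{polylog}\Bigl(\frac{K}{\beta}\Bigr) + \frac{\sigma R}{\sqrt{bK}} \cdot \mathrm{polylog}\Bigl(\frac{K}{\beta}\Bigr),
\end{equation*}
together with the induction step that keeps all iterates in $B_{2R}(x^*)$ (so that queried points $x^{k+1} \pm \tau \mathbf{e}$ stay in $B_{2R + \tau}(x^*) \subseteq B_{3R + 2\tau}(x^*)$, ensuring the Lipschitz assumption and Lemma \ref{lem:batch median properties} remain valid throughout the trajectory). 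Setting $\tau = \varepsilon/(4M_2)$ and using $|\hat{f}_\tau - f| \leq \tau M_2 \leq \varepsilon/4$ converts the bound on $\hat{f}_\tau(y^K) - \hat{f}_\tau(x^*)$ into one on $f(y^K) - f(x^*)$ at the cost of a constant factor. Balancing the two terms (the $LR^2/K^2$ accelerated term vs.\ the $\sigma R/\sqrt{bK}$ stochastic term) against $\varepsilon$ and substituting $L = \sqrt{d}M_2/\tau = 4 d^{1/2} M_2^2/\varepsilon$ and the two formulas for $\sigma^2$ from Lemma \ref{lem:batch median properties} yields the two iteration complexities claimed, where the $\kappa$-dependence enters only through the $\sigma^2$ factor $(4/\kappa)^{2/\kappa}$.

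The main obstacle is verifying, within an induction, the iterate-containment claim $\{x^k\}, \{y^k\}, \{z^k\} \subseteq B_{2R}(x^*)$ with high probability: this is what allows us to use the local Lipschitz/smoothness properties of $\hat{f}_\tau$ and the unbiasedness of $g_{med}^{k+1}$ throughout the run. This is handled by the standard clipped-SSTM probabilistic induction, where the clipping levels $\lambda_k \sim R/(\alpha_{k+1}A)$ and the careful choice of $a$ are precisely tuned so that, conditional on the induction hypothesis, a Bernstein-type argument on the clipped martingale difference $\alpha_{k+1}\clip_2(g_{med}^{k+1}, \lambda_{k+1}) - \alpha_{k+1}\nabla \hat{f}_\tau(x^{k+1})$ gives the desired deviation bound with probability at least $1 - \beta/K$; a union bound over $k$ closes the induction. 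Once this is in place, the rate follows by plugging the balanced choice of $K$ into the convergence inequality above.
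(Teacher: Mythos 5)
Your proposal is correct and follows essentially the same route as the paper: smooth with $\hat{f}_\tau$, use Lemma~\ref{lem:batch median properties} to certify that $\batch^{m}_b$ is an unbiased estimator of $\nabla\hat{f}_\tau$ with second moment $\sigma^2/b$, then plug this into the black-box high-probability clipped-SSTM theorem of \citep{kornilov2023accelerated} (which already contains the iterate-containment induction you describe), set $\tau = \varepsilon/(4M_2)$, and balance the $\sqrt{d}M_2R^2/(\tau K^2)$ and $\sigma R/\sqrt{bK}$ terms against $\varepsilon$. The paper simply packages the oracle requirements as Assumption~\ref{as:oracle} and substitutes $\Sigma_2 = \sigma/\sqrt{b}$, so no substantive difference remains.
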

The proof is located in Appendix \ref{par: sstm conv thms}.
\subsubsection{Discussion}

\textbf{Optimality.} For Lipschitz oracle, the first term matches  the optimal bound in terms of $\varepsilon$ for the deterministic  non-smooth problems \citep{bubeck2019complexity}, and the second term matches the optimal bound for zeroth-order problems with the finite variance \citep{nemirovskij1983problem}.  Under "optimal bound", we mean the optimal bound for the problems with \textit{any noise}. For the symmetric noise only, we are not aware of any bounds.  In terms of $d$, we obtain the factor $dM_2^2 +  \frac{d^2\Delta^2}{\kappa^{\frac{2}{\kappa}}}$ instead of $(\sqrt{d}(M_2 + \Delta))^\frac{\kappa}{\kappa-1}$ from \citep{kornilov2023accelerated}.\color{black}

\noindent\textbf{Noise bounds.} In case of one-point oracle, while noise $\phi$ is “small”, i.e., 
\begin{equation}\label{eq:noise upper bound}
    \Delta  \leq \left(\frac{\kappa}{4} \right)^\frac1\kappa\frac{\varepsilon}{\sqrt{d}}
\end{equation}
 convergence rate is preserved. This bound on $\Delta$ is optimal in terms of  $\varepsilon$, see \citep{Lobanov_OPTIMA23, pasechnyuk2023upper, risteski2016algorithms}. 
 

%

\subsubsection{Other classes of the optimized functions}
\begin{remark}[Smooth objective]\label{rem:smooth}
    The estimates presented in Theorem~\ref{thm:SSTM col} can be improved by introducing a new assumption, namely the assumption that the objective function $f$ is $L$-smooth with $L>0$: ${\| \nabla f(y) - \nabla f(x)\|_2 \leq L \| y- x \|_2}$, $\forall x,y \in \R^d$. Using this assumption, we obtain the following value of the smoothing parameter $\tau = \sqrt{\varepsilon/L}$ \citep[see][the end of Section 4.1]{gasnikov2022randomized}. Thus, assuming smoothness and   convexity (As. \ref{as:f convex}) of the objective function and assuming symmetric noise (As. \ref{as:dist}), we obtain the following bounds for the sample complexity with independent oracle: 
    $$\widetilde{\cO}\left( \max\left\{\sqrt{\frac{L R^2}{\varepsilon}},  \frac{(\sqrt{d} R)^2}{b \cdot  \varepsilon^2}  \left( M_2^2  \lor \left(\frac{4}{\kappa} \right)^\frac2\kappa \frac{d L\Delta^2 }{\varepsilon}\right)\right\}\right)$$ and with Lipschitz oracle: $${\widetilde{\cO}\left( \max\left\{\sqrt{\frac{L R^2}{\varepsilon}},  \frac{d(M_2^2 + d \Delta^2/\kappa^{\frac{2}{\kappa}})R^2}{b \cdot  \varepsilon^2}  \right\} \right)}.$$  These rates match the sample's complexity for the full gradient coordinate-wise estimate. 
\end{remark}
\begin{remark}[Polyak–Lojasiewicz objective]
    The results of Theorem \ref{thm:SSTM col} can be extended to the case when the smooth objective function satisfies the Polyak–Lojasiewicz condition via restarts: let a function $f(x)$ is differentiable and there exists constant $\mu > 0$ s.t. $\forall x \in \R^d$ the following inequality holds ${\| \nabla f(x)} \|_2^2 \geq 2 \mu (f(x) - f(x^*))$. Then, assuming smoothness (see Remark \ref{rem:smooth}), Polyak–Lojasiewicz condition for the objective function and symmetric noise (As. \ref{as:dist}), we obtain the following bounds for the sample complexity with independent oracle: 
    $$\widetilde{\cO}\left( \max\left\{\frac{L}{\mu},  \frac{d L}{b \mu^2  \varepsilon}  \left( M_2^2  \lor \left(\frac{4}{\kappa} \right)^\frac2\kappa \frac{d L\Delta^2 }{\varepsilon}\right)\right\}\right)$$ and with Lipschitz oracle:
    $${\widetilde{\cO}\left( \max\left\{\frac{L}{\mu},  \frac{d L(M_2^2 + d \Delta^2/\kappa^{\frac{2}{\kappa}})}{b \mu^2  \varepsilon}  \right\} \right)}.$$ 
    See Appendix \ref{par:Restarts} for more details.
\end{remark}

\subsection{Our \algname{ZO-clipped-med-SMD} for constrained problems}\label{sec:SMD} 
We propose our novel \algname{ZO-clipped-med-SMD} to minimize functions on a convex compact $Q \subset \R^d$. We use unbatched median estimation \eqref{eq:med estimation}  in the zeroth-order algorithm \algname{ZO-clipped-SMD} from \citep{kornilov2023gradient}, which is based on \algname{Mirror Gradient Descent}. The pseudocode is presented in Algorithm \ref{alg:SMD}. 

We define $1$-strongly convex w.r.t. $\ell_p-$norm and differentiable prox-function $\Psi_p$, its convex (Fenchel) conjugate and Bregman divergence, respectively, as
\begin{eqnarray}
    \Psi_p^*(y) &=& \sup\limits_{x \in \R^d} \{\langle x,y \rangle - \Psi_p(x) \}, \notag \\
    V_{\Psi_p}(y,x) &=& \Psi_p(y) - \Psi_p(x) - \langle \nabla \Psi_p(x), y -x \rangle.\notag
\end{eqnarray}
\vspace{-15pt}
\begin{algorithm}
\caption{ \algname{ZO-clipped-med-SMD}  }\label{alg:SMD}
\begin{algorithmic}[1]
\REQUIRE{Number of iterations $K$, median size $m$, stepsize $\nu$, prox-function $\Psi_{p}$, smoothing parameter $\tau$, clipping level $\lambda$.}
    \STATE $x^0 = \arg\min\limits_{x \in Q} \Psi_{p}(x)$.
    
    \FOR{$k = 0, 1, \dots ,K-1$}  
        \STATE Sample $\mathbf{e}$ from $U(S_1(0))$ and sequence $\{\xi\}$.
        \STATE $g_{med}^{k+1} = \med^{m} (x^{k}, \mathbf{e}, \{\xi\}).$
        
        \STATE  $y^{k+1}  = \nabla(\Psi_{p}^*) (\nabla \Psi_{p}(x^k) - \nu \cdot \clip_q( g_{med}^{k+1}, \lambda)).$ 
        \STATE $x^{k+1}  = \arg \min\limits_{x \in Q} V_{\Psi_{p}}(x, y^{k+1}).$

    \ENDFOR

    \ENSURE $ \overline{x}^K := \frac{1}{K}\sum\limits_{k=0}^{K}x^k$
\end{algorithmic}
\end{algorithm}
\begin{theorem}[Convergence of \algname{ZO-clipped-med-SMD}] \label{thm:SMD col}
    Consider convex (As. \ref{as:f convex}) and $M_2$-Lipschitz (As. \ref{as:Lipshcitz}) function $f$ on $Q + B_{2\tau}(0)$ with two-point oracle corrupted by noise under As. \ref{as:dist} with $\kappa > 0$. 
    
    To achieve function accuracy $\varepsilon$, i.e., $f(\overline{x}^K ) - f(x^*) \leq \varepsilon$ with probability at least $1-\beta$  via \algname{ZO-clipped-med-SMD} with median size $m = \frac{2}{\kappa} + 1$, clipping level $\lambda = \sigma a_q \sqrt{K}$, stepsize $\nu = \frac{D_{\Psi_p}}{\lambda}$, diameter $D_{\Psi_p}^2 \eqdef 2 \sup\limits_{x,y \in Q}  V_{\Psi_{p}}(x,y)$, prox-function $\Psi_p$ and $\tau = \frac{\varepsilon}{4M_2}$, the number of iterations $K$ must be
    \begin{eqnarray} \label{eq:SMD conv iter num lip}
        \widetilde{\cO}\left(   \frac{(\sqrt{d} M_2 a_q D_{\Psi_{p}})^2}{  \varepsilon^2}  \left( 1  \lor \left(\frac{4}{\kappa} \right)^\frac2\kappa \frac{d\Delta^2}{\varepsilon^2}\right)\right), \notag \\
        \widetilde{\cO}\left(   \frac{d (M^2_2 + d \Delta^2/\kappa^{\frac{2}{\kappa}}) a^2_q D^2_{\Psi_{p}}}{ \varepsilon^2 }\right)
    \end{eqnarray}
     for \textbf{independent} and \textbf{Lipschitz} oracle, respectively. \!\!Each iteration uses $(2m\!+\!1)$ oracle calls.
\end{theorem}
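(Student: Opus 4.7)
The plan is to reduce optimization of the non-smooth $f$ to stochastic mirror descent on the smoothed surrogate $\hat{f}_\tau$, then invoke the existing high-probability analysis of clipped SMD with our median-based oracle in place of the original heavy-tailed one.

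\textbf{Step 1 (Reduction to $\hat f_\tau$).} By Lemma~\ref{lem:hat_f properties}, $\hat f_\tau$ is convex, $M_2$-Lipschitz, differentiable on $Q$, and $\sup_{x\in Q}|\hat f_\tau(x)-f(x)|\le \tau M_2$. Choosing $\tau=\varepsilon/(4M_2)$ and showing $\hat f_\tau(\bar x^K)-\hat f_\tau(x^*)\le \varepsilon/2$ implies the desired bound on $f$ via the triangle inequality. Thus it suffices to establish the SMD rate for $\hat f_\tau$.

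\textbf{Step 2 (Gradient oracle with bounded second moment).} By Lemma~\ref{lem:batch median properties}, since we take $m=\lceil 2/\kappa\rceil+1>2/\kappa$, the iterate $g_{\text{med}}^{k+1}=\med^m(x^k,\mathbf{e},\{\xi\})$ satisfies $\EE[g_{\text{med}}^{k+1}\mid x^k]=\nabla\hat f_\tau(x^k)$ together with the second-moment bound $\EE\|g_{\text{med}}^{k+1}-\nabla\hat f_\tau(x^k)\|_q^2\le \sigma^2 a_q^2$, where $\sigma^2$ is exactly the oracle-dependent quantity supplied by the lemma (independent vs.\ Lipschitz oracle). Once this is in place, the heavy tails of $\phi$ are invisible to the rest of the analysis: the only stochastic input to SMD is a vector field with a bounded dual-norm second moment.

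\textbf{Step 3 (High-probability clipped SMD).} I would then reuse the convergence template of \algname{ZO-clipped-SMD} from \citep{kornilov2023gradient}, which establishes a high-probability bound of Mirror-Descent type for a clipped stochastic gradient whose noise has bounded second moment in the $\ell_q$-norm. The standard ingredients are: (i) the one-step Bregman inequality $V_{\Psi_p}(x^*,x^{k+1})\le V_{\Psi_p}(x^*,x^k) -\nu\ve{\clip_q(g_{\text{med}}^{k+1},\lambda)}{x^k-x^*}+\tfrac{\nu^2}{2}\|\clip_q(g_{\text{med}}^{k+1},\lambda)\|_q^2$; (ii) the decomposition of $\clip_q(g,\lambda)$ into its expectation (bias) and martingale difference part; (iii) the deterministic bias bound $\|\EE[\clip_q(g,\lambda)]-\nabla\hat f_\tau\|_q \le \sigma^2 a_q^2/\lambda$ and the conditional second-moment bound $\|\clip_q(g,\lambda)\|_q\le\lambda$ a.s.\ together with $\EE\|\clip_q(g,\lambda)-\EE\clip_q(g,\lambda)\|_q^2\le\sigma^2 a_q^2$; and (iv) Freedman's inequality to control the martingale sum $\sum_k\nu\ve{\xi^k}{x^k-x^*}$ with probability $1-\beta$.

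\textbf{Step 4 (Calibration).} Summing over $k=0,\dots,K-1$, dividing by $K$, using convexity to pass to $\bar x^K$, and optimizing, the telescoping argument produces a bound of the form $\hat f_\tau(\bar x^K)-\hat f_\tau(x^*)=\widetilde{\cO}(D_{\Psi_p}\sigma a_q/\sqrt{K})$ with probability $\ge 1-\beta$, for the prescribed $\lambda=\sigma a_q\sqrt{K}$ and $\nu=D_{\Psi_p}/\lambda$. Setting this $\le\varepsilon/2$ gives $K=\widetilde{\cO}(\sigma^2 a_q^2 D_{\Psi_p}^2/\varepsilon^2)$. Substituting the two expressions for $\sigma^2$ from Lemma~\ref{lem:batch median properties} with $m=\Theta(1/\kappa)$ and $\tau=\varepsilon/(4M_2)$ yields precisely the two bounds \eqref{eq:SMD conv iter num lip}; the factor $d\Delta^2/\varepsilon^2$ in the independent case arises from $(d\Delta/\tau)^2$ after substituting $\tau$, while the Lipschitz case produces $d^2\Delta^2/\kappa^{2/\kappa}$ directly, giving the additive $d(M_2^2+d\Delta^2/\kappa^{2/\kappa})$ factor. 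Each iteration draws one $\mathbf{e}$ and $2m+1$ noise samples, yielding the stated $(2m+1)$ oracle calls per step.

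\textbf{Main obstacle.} The nontrivial piece is step~3: obtaining a high-probability (not merely in-expectation) SMD rate in a general $\ell_p/\ell_q$ geometry requires a martingale concentration argument where the increments are only controlled in the dual norm. Freedman's inequality applies to scalar martingales, so one has to first reduce via Cauchy--Schwarz (in the Bregman sense) to a scalar martingale whose conditional variance is itself controlled by $\sigma^2 a_q^2$ and whose boundedness stems from the clipping level $\lambda$. Aligning $\lambda$, $\nu$, and $K$ so that bias, variance, and martingale deviation terms all contribute at most $O(\varepsilon)$ is the source of the logarithmic factors hidden in $\widetilde{\cO}(\cdot)$.
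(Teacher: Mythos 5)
Your proposal follows essentially the same route as the paper: reduce to the smoothed surrogate $\hat f_\tau$ with $\tau=\varepsilon/(4M_2)$, use Lemma~\ref{lem:batch median properties} to certify that the median estimator is an unbiased oracle with dual-norm second moment $\sigma^2 a_q^2$, invoke the high-probability clipped-SMD convergence result of \citep{kornilov2023gradient} (the paper cites its Theorem~4.3 with the bounded-variance exponent, which is exactly the template you sketch in Step~3), and calibrate $\lambda,\nu,K$ by equating both error terms to $\varepsilon/2$. Your substitution of the two expressions for $\sigma^2$ reproduces the stated bounds correctly, so the argument is sound and matches the paper's proof.
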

The proof is similar to the proof of Theorem \ref{thm:SSTM col} and located in Appendix \ref{par: conv thms}.

\textbf{Optimality.} Bounds \eqref{eq:SMD conv iter num lip} match  optimal in terms of $\varepsilon$ bounds for stochastic non-smooth optimization on convex compact with the finite variance \citep{vural2022mirror}. The upper bound for $\Delta$ under which convergence rate is preserved matches the unconstrained case \eqref{eq:noise upper bound}.

\noindent\textbf{Acceleration via restarts.} For $\mu$-strongly-convex functions with Lipschitz oracle or independent oracle with small noise, we apply the restarted versions of our \algname{ZO-clipped-med-SMD} and \algname{ZO-clipped-med-SSTM}. Algorithms and results are located in Appendix~\ref{par:Restarts}.

\section{Application to the multi-armed bandit problem with heavy tails}
\label{sec:MAB}
In this section, we present our novel \algname{Clipped-INF-med-SMD} algorithm for multi-armed bandit (MAB) problem with heavy-tailed rewards.

\textbf{Introduction.} The stochastic MAB problem \citep{lattimore2020bandit} can be formulated as follows: an agent at each time step $t = 1,\dots, T$ chooses an action $A_t$ from a given action set $\mathcal{A} = (a_1, \dots, a_n)$ and suffers stochastic loss. For each action $a_i$, there exists a probability density function for losses $\mathbf{p}(a_i)$, and an agent doesn't know them in advance. An agent can observe losses only for one action at each step, namely, the one it chooses. At each round $t$, when action $a_i$ is chosen (i.e. $A_t = a_i$), stochastic loss $\mu_{A_t}+\xi_{A_t, t}$ sampled from $\mathbf{p}(a_i)$ independently. Agent's goal is to minimize \textit{average regret}:
\begin{equation*}\vspace{-0.1cm}
    \EE[\mathcal{R}_T] =  \sum_{t=1}^T \left [ \mu_{A_t} - \mu^* \right ], \quad \mu^* = \min_{a_i \in \mathcal{A}} \mu_i. \vspace{-0.1cm}
\end{equation*}
One of the main approaches for solving the MAB problem is to use reduction to the online convex optimization problem \citep{hazan2016introduction, orabona2019modern}. Consider stochastic linear loss functions $l_t(x_t) = \langle \mu+\xi_t, x_t \rangle$, with noise $\xi_t$ and unknown fixed vector of expected losses $\mu \in \R^d$. The decision variable $x_t \in  \triangle^d_+$ can be viewed as  player's mixed strategy (probability distribution over arms), which they use to sample arms to minimize expected regret 
$$
\EE[\mathcal{R}_T (u)] =  \EE \left [ \sum_{t=1}^T l_t(x_t) - \min_{u \in \triangle^d_+} \left (\sum_{t=1}^T l_t(u) \right )\right ].
$$
The player observes only sampled losses for the chosen arm, i.e., the (sub)gradient $g (x) \in \partial l(x)$ is not observed in the MAB setting, and one must use an inexact oracle instead. 

\textbf{Related works.} Bandits with heavy tails were introduced in \citep{liu2011multi, bubeck2013bandits}. The heavy noise assumption usually requires the existence of $\kappa \in (1, 2]$, such that $\EE[\|\mu+\xi_t\|^{\kappa}] \leq \sigma^{\kappa}$ {(in this work, we use different Assumption~\ref{as:dist} with $\kappa >0$).  In \citep{bubeck2013bandits}, the authors provide lower bounds on regret $\Omega \left ( \sigma d^{\frac{\kappa - 1}{\kappa}} T^{\frac{1}{\kappa}}   \right )$ and nearly optimal algorithmic scheme called Robust UCB. Recently, a few optimal algorithms were proposed \citep{lee2020optimal, zimmert2019optimal, huang2022adaptive, dorn2023} with regret bound $\tilde{O} \left ( \sigma d^{\frac{\kappa - 1}{\kappa}} T^{\frac{1}{\kappa}}   \right )$. \algname{HTINF} \citep{huang2022adaptive} is an INF-type algorithm with a specific pruning procedure. Algorithm \algname{1/2-Tsallis} \citep{zimmert2019optimal} is similar to \algname{HTINF}. \algname{INF-clip} \citep{dorn2023} employs a clipping mechanism instead of pruning, it clips rewards at the initial stage of the estimator construction process, prior to applying importance weighting. The main drawback of this procedure that the importance weighting procedure can artificially produce a burst in the gradient estimator.  Finally, \algname{APE} \citep{lee2020optimal} is a perturbation-based exploration strategy that uses a p-robust mean estimator. Its algorithmic scheme is UCB-type and is very different from our algorithm. 
}

\textbf{Our approach.} We assume that noise $\xi_t$ satisfy Assumption~\ref{as:dist} for some $\kappa > 0$. We construct our \algname{Clipped-INF-med-SMD} (Algorithm \ref{alg:bandits}) based on Online Mirror Descent, but in case of symmetric noise we can improve regret upper bounds and make it $\tilde{O} (\sqrt{dT})$ which is optimal compared to the lower bound $\Omega(\sqrt{dT})$ for stochastic MAB with the bounded variance of losses.  
In our algorithm, we use an importance-weighted estimator: $$\hat{g}_{t,i} = \begin{cases} \frac{g_{t,i}}{x_{t, i}} & \text{if } i = A_t\\ 0 & \text{otherwise} \end{cases},$$ where $A_t$ is the index of the chosen (at round $t$) arm.  This estimator is unbiased, i.e. $\EE_{x_t}[\hat{g}_t] = g_t$. The main drawback of this estimator is that, in the case of small $x_{t,i}$, the value of $\hat{g}_{t,i}$ can be arbitrarily large. When the noise $g_t - \mu$ has heavy tails (i.e., $\|g_t - \mu\|_{\infty}$ can be large with high probability), this drawback can be amplified. That is why we use robust median estimation with  clipping.  

\begin{algorithm}[!h]
\caption{ \algname{Clipped-INF-med-SMD}  }\label{alg:bandits}
\begin{algorithmic}[1]
\REQUIRE{Time period $T$, median size $m$, stepsize $\nu$, prox-function $\Psi_{p}$,  clipping level $\lambda$.}
    \STATE $x_0 = \arg\min\limits_{x \in  \triangle^d_+} \Psi_{p}(x)$.
    \STATE Set number of iterations $K = \left \lceil\frac{T-1}{2m+1}\right \rceil$.
    \FOR{$k = 0, 1, \dots ,K - 1$}  
        \STATE Draw $A_t$ for $2m+1$ times ($t=(2m+1)\cdot k+1,\dots,(2m+1)\cdot (k+1)$) with $P(A_t = i) = x_{k,i}$, $i=1,\dots, d$ and observe rewards $g_{t, A_t}$.
        \STATE For each observation, construct estimation $\hat{g}_{t,i} = \begin{cases} \frac{g_{t,i}}{x_{k,i}} & \text{if } i = A_t \\ 0 & \text{otherwise} \end{cases}, i=1,\dots, d.$

        \STATE \label{eq:SMD_get_oracle} $g_{med}^{k+1} = \texttt{Median}( \{\hat{g}_{t}\}_{t=(2m+1)\cdot k+1}^{(2m+1)\cdot(k+1)}). $
        \STATE  $y_{k+1}  = \nabla(\Psi_{p}^*) (\nabla \Psi_{p}(x_k) - \nu \cdot \clip_q\left(g_{med}^{k+1}, \lambda\right)). $
        \STATE $ x_{k+1}  = \arg \min\limits_{x \in  \triangle^d_+} V_{\Psi_{p}}(x, y_{k+1}).$

    \ENDFOR

\end{algorithmic}
\end{algorithm}
\begin{theorem} [Convergence of \algname{Clipped-INF-med-SMD}] 
    \label{avrregretest}
Consider MAB problem where the conditional probability density function for each loss satisfies As.~\ref{as:dist} with $\Delta, \kappa > 0,$ and $ \|\mu\|_{\infty} \leq R$. Then, for the period $T$,  the sequence $\{x_t\}_{t=1}^T$ generated  by \algname{Clipped-INF-med-SMD} with parameters $m = \frac{2}{\kappa}+1$, $\tau = \sqrt{d}$, $\nu = \frac{\sqrt{(2m+1)}}{\sqrt{T (36 c^2 + 2R^2)}}$, $\lambda = \sqrt{T}$ and prox-function $\Psi_1(x) = \psi(x) \eqdef 2\left (1 - \sum_{i=1}^d x_i^{1/2} \right)$  satisfies for all $ u \in \Delta_+^d$
\begin{equation}
    \EE \left [\mathcal{R}_T (u) \right ]  \leq \sqrt{dT} \cdot (8c^2/\sqrt{d} + 4 \sqrt{ (2m+1)(18 c^2 + R^2)}),
\end{equation}
where $c^2 = \left ( 32 \ln d - 8 \right ) \cdot \left (8M_2^2 +  2\Delta^2 (2m+1)\left(\frac{4}{\kappa} \right)^{\frac2\kappa} \right )$.  Moreover, high probability bounds from  Theorem \ref{thm:SMD col} also hold.
\end{theorem}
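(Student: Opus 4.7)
The plan is to reduce Algorithm~\ref{alg:bandits} to a $K$-round online mirror descent (OMD) on the simplex with the $1/2$-Tsallis prox $\psi$, where each OMD step uses the clipped median $\tilde g^{k+1} := \clip_\infty(g_{med}^{k+1}, \lambda)$ in place of the true linear-loss gradient $\mu$. Since $x_k$ is held constant for the $2m+1$ consecutive base rounds feeding one median, the expected regret factorises as
\begin{equation*}
\EE[\mathcal R_T(u)] \leq (2m+1)\sum_{k=0}^{K-1}\EE[\langle\mu,\,x_k-u\rangle] + O((2m+1)R),
\end{equation*}
so it suffices to bound the OMD regret against the fixed gradient $\mu$ on $K$ rounds.

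First I would apply the standard OMD one-step inequality for prox $\psi$:
\begin{equation*}
\sum_{k=0}^{K-1}\langle\tilde g^{k+1}, x_k - u\rangle \leq \frac{V_{\psi}(u, x_0)}{\nu} + \frac{\nu}{2}\sum_{k=0}^{K-1}\|\tilde g^{k+1}\|_{x_k,*}^2,
\end{equation*}
where $\|\cdot\|_{x_k,*}$ is the local dual norm from $\nabla^2\psi(x_k)$. For $1/2$-Tsallis on $\Delta_+^d$, the path length is controlled by $V_\psi(u,x_0)\leq 2\sqrt d$, and the Hessian self-duality gives $\|g\|_{x_k,*}^2 \leq \sum_i x_{k,i}^{3/2} g_i^2$. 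The mechanism that produces the $\sqrt{dT}$ rate is that the $1/x_{k,i}$ factor from the importance-weighted $\hat g_{t,i}$ cancels against $x_{k,i}^{3/2}$ in the local norm: after averaging over $A_t\sim x_k$ one has $\EE_{A_t}\|\hat g_t\|_{x_k,*}^2 \leq \sqrt d\,\|g_t\|_\infty^2$. Passing this through the component-wise median and the $\clip_\infty(\cdot,\lambda)$ operator, and inserting the second-moment bound from Lemma~\ref{lem:batch median properties} specialised to $q=\infty$ (which supplies the factor $a_\infty^2 = 32\ln d - 8$ hidden inside $c^2$) together with the contribution of $\|\mu\|_\infty\leq R$, yields $\EE\|\tilde g^{k+1}\|_{x_k,*}^2 \leq \sqrt d\cdot(18c^2 + R^2)$.

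To close the gap between $\tilde g^{k+1}$ and $\mu$, I decompose $\langle\mu, x_k - u\rangle = \langle\tilde g^{k+1}, x_k - u\rangle + \langle \mu - \tilde g^{k+1}, x_k - u\rangle$ and take expectation. The second inner product is nonzero only through the clipping bias; by Markov applied to the bounded second moment of $g_{med}^{k+1}$, $\|\EE[\tilde g^{k+1} - \mu]\|_\infty \leq c^2/\lambda$, and since $\|x_k - u\|_1 \leq 2$ the total bias across the horizon is at most $2(2m+1)Kc^2/\lambda \leq 2c^2\sqrt T$. Plugging in $\nu = \sqrt{(2m+1)/(T(36c^2+2R^2))}$, $\lambda = \sqrt T$, and $K\leq T/(2m+1)$, the three contributions --- $(2m+1)\cdot 2\sqrt d/\nu$, $(2m+1)\cdot (\nu/2)\,K\sqrt d(18c^2+R^2)$, and the clipping bias --- are each $O(\sqrt{dT})$ and sum to the announced bound. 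I expect the main obstacle to be the second step: proving $\EE\|\tilde g^{k+1}\|_{x_k,*}^2 \leq \sqrt d(18c^2+R^2)$ requires routing the symmetry of $\xi_t$ from Assumption~\ref{as:dist}(I) through the importance-weighting $\hat g_{t,i} = g_{t,i}/x_{k,i}$ so that the per-coordinate error symmetry survives the random sampling $A_t\sim x_k$, and then combining the $\ell_\infty$ moment bound of Lemma~\ref{lem:batch median properties} with the Tsallis self-dual local-norm identity while isolating the shift by $\mu$, which is the nonsymmetric part of each sample $g_t$.
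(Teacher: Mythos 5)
Your overall skeleton is the same as the paper's: OMD with the $1/2$-Tsallis prox and local-norm regret bound (the paper's Lemma~\ref{regretest1}), a clipping-bias term controlled by the second moment over $\lambda$ (Remark~\ref{useful bounds}, via Lemma~\ref{sad}), an expected local-norm variance term of order $\sqrt{d}(c^2+R^2)$ (Lemma~\ref{regretest2}), and the same parameter plug-in. The genuine gap is exactly at the step you flag as ``the main obstacle'': the bound $\EE\|\tilde g^{k+1}\|_{x_k,*}^2 \leq \sqrt d\,(18c^2+R^2)$. Your proposed mechanism --- the per-sample Tsallis cancellation $\EE_{A_t}\bigl[\sum_i x_{k,i}^{3/2}\hat g_{t,i}^2\bigr] \le \sqrt d\,\|g_t\|_\infty^2$, then ``passing this through the component-wise median and the clip'' --- does not go through. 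First, under As.~\ref{as:dist} with $\kappa\le 2$ the quantity $\EE\|g_t\|_\infty^2$ is unbounded (for $\kappa\le 1$ even the mean need not exist), so the intermediate bound $\sqrt d\,\|g_t\|_\infty^2$ is vacuous in expectation; the heavy tails must be tamed \emph{before} taking second moments. Second, the component-wise median is a nonlinear order statistic, so a per-sample bound on $\EE_{A_t}\|\hat g_t\|_{x_k,*}^2$ cannot simply be ``passed through'' it; there is no monotonicity or convexity argument that transports the importance-weight cancellation to the median of $2m+1$ sparse vectors.

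The paper closes this step by a different mechanism: it never uses the importance-weight/local-norm cancellation at all. It first bounds $\EE\|g_{med}^{k+1}-\mu\|_\infty^2 \le c^2$ directly from the median-estimate Lemma~\ref{lem:batch median properties} specialised to $q=\infty$ together with linearity of the loss (Lemma~\ref{ftau}); this is where the order-statistic argument under the symmetry assumption kills the heavy tails, and it yields a constant $c^2$ free of $x_{k,i}$. Then the clipping lemma (Lemma~\ref{sad}) gives $\EE\|\tilde g_{med}^{k}\|_\infty^2 \le 36c^2+2R^2$ and the bias $\|\EE[\tilde g_{med}^{k}]-\mu\|_\infty \le 4c^2/\lambda$ (note this needs $\lambda \ge 2\|\mu\|_\infty$, which your Markov-style claim with constant $c^2/\lambda$ also implicitly requires), and the factor $\sqrt d$ in the variance term comes simply from $\sum_i x_{k,i}^{3/2}\le \sum_i x_{k,i}^{1/2}\le\sqrt d$ on the simplex, so that $\EE\bigl[\sum_i(\tilde g_{med}^{k})_i^2 x_{k,i}^{3/2}\bigr]\le \sqrt d\,(36c^2+2R^2)$ (Lemma~\ref{regretest2}). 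So to complete your proof you must replace the sketched cancellation-through-the-median argument with an $\ell_\infty$ second-moment bound for the median estimator itself; as written, the proposal identifies the right target inequality but does not establish it.
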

The proof of Theorem \ref{avrregretest} is located in  Appendix~\ref{app:proof_of_th_3}.
\vspace{-0.25cm}
\section{Numerical Experiments}\label{sec:Exp}
\vspace{-0.1cm}
In this section, we demonstrate the superior performance of ours \algname{ZO-clipped-med-SSTM} and \algname{Clipped-INF-med-SMD} under heavy-tailed noise on experiments on syntactical and real-world data. Additional experiments and technical details are located in Appendix \ref{sec:app exp}. \textit{All data is taken from publicly available sources or synthesized following the experiments' descriptions.}
\vspace{-0.1cm}
\subsection{Multi-Armed Bandit}
\vspace{-0.1cm}
  {We compare our \algname{Clipped-INF-med-SMD} with popular SOTA algorithms tailored to handle MAB problem with heavy tails, namely, \algname{HTINF} and \algname{APE}.} We focus on an experiment involving only two available arms ($d=2$). Each arm $i$ generates random losses $g_{t, i} \sim \xi_t + \beta_{i}$. Parameters $\beta_{0} = 3, \beta_{1} = 3.5$ are fixed, and independent random variables $\xi_t$ have the same probability density $\text{p}_{\xi_t}\left(x\right) = \frac{1}{3 \cdot \left(1 + \left(\frac{x}{3}\right)^2\right) \cdot \pi}$. 

\noindent For all methods, we evaluate the distribution of expected regret and probability of picking the best arm over $100$ runs. The results are presented in Figure \ref{fig:galaxy4}.
\begin{figure*}[!h]
    \centering
\includegraphics[width=0.85\textwidth]{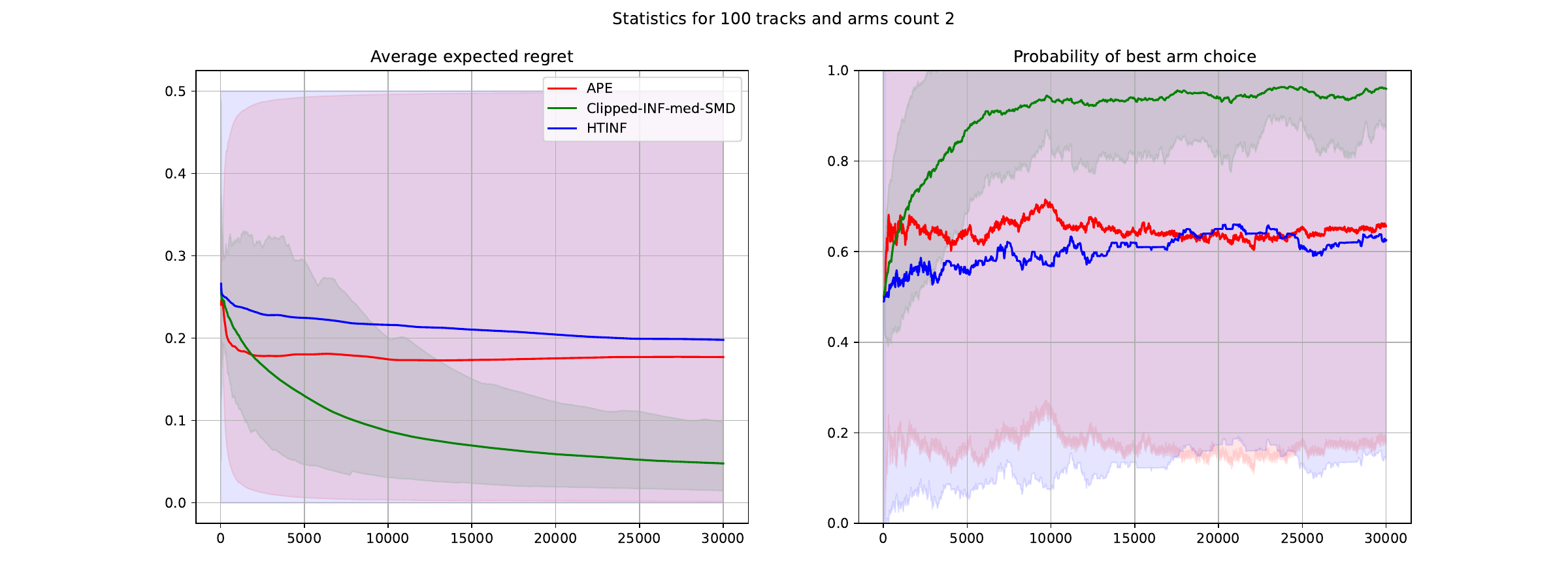}
    \caption{Average expected regret and probability of optimal arm picking mean for $100$ experiments and $30000$ samples with $0.95$ and $0.05$ percentiles for regret and ± std bounds for probabilities.}
    \label{fig:galaxy4}
\end{figure*}

As one can see from the graphs, \algname{HTINF} and \algname{APE} do not have convergence in probability, while our \algname{Clipped-INF-med-SMD} does, which confirms the efficiency of the proposed method.  In Appendix \ref{sec:bandit extra exp}, we provide technical details and additional experiments for different $\kappa$.
\vspace{-0.1cm}
\subsection{Cryptocurrency portfolio optimization}
\vspace{-0.1cm}
We choose cryptocurrency portfolio optimization problem for our \algname{Clipped-INF-med-SMD} real world application, since cryptocurrency pricing data is known by having heavy-tailed distribution. In our scenario, we have $n=9$ assets for investing. At step $t$, we choose assets’ distribution $x_{t,i} \in \Delta^n$ and then observe the whole income vector $r_{t,i}$ for each asset $i$. The main goal is to maximize total income $\max \mathbb{E} [\sum_{t = 1}^T \sum_{i = 1}^n r_{t,i} x_{t,i}]$ over a fixed time interval~$T$.

Portfolio selection has the full feedback for all assets, while, in standard bandits, we observe only one asset per step. We adjust our \algname{Clipped-INF-med-SMD} for the full feedback via calculating line 4 in Algorithm \ref{alg:bandits} for each asset $i$. As baselines, we use two strategies: hold ETH and the Efficient Frontier method \citep{markovitz} with maximal sharp ratio portfolio selected. For a dataset, we use open prices from Binance Spot for 2023.

The results are presented in Figure \ref{fig:crypto}. As one can see, the Efficient Frontier strategy can't efficiently perform on cryptocurrency assets, and our \algname{Clipped-INF-med-SMD} achieved higher performance than just holding the ETH strategy, so it can be applied for detecting potentially promising assets.

\begin{figure*}[!h]
    \centering
    \includegraphics[width=1\textwidth]{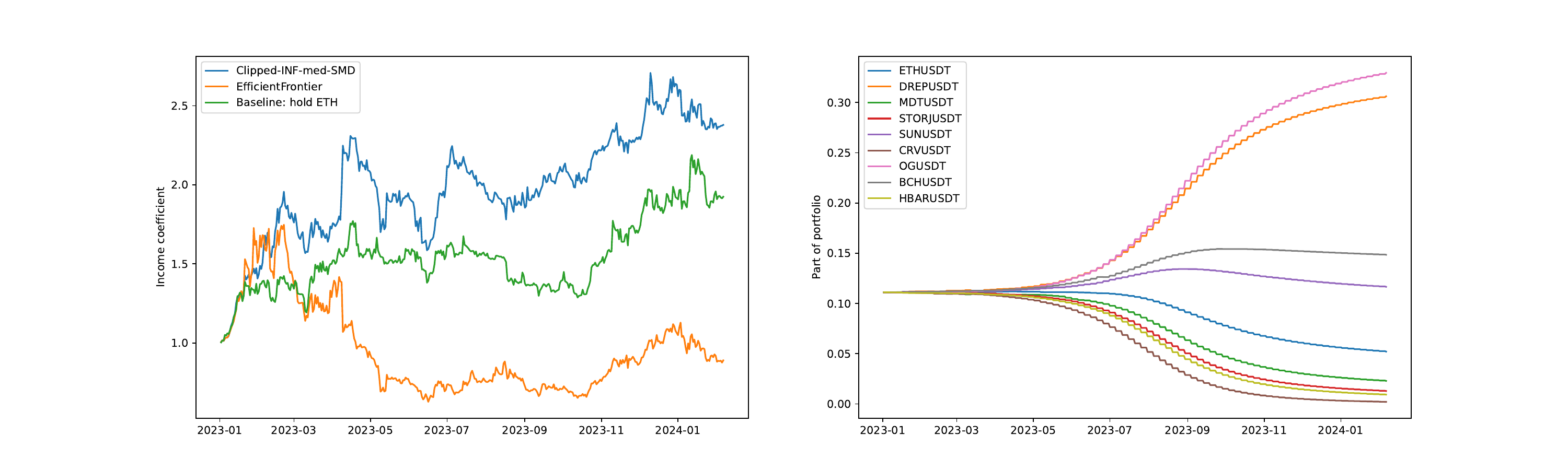}
    \caption{Strategies profit coefficient and \algname{Clipped-INF-med-SMD} assets distribution over 2023.}
    \label{fig:crypto}
\end{figure*}

\subsection{Zeroth-order optimization} 
To demonstrate the performance of our \algname{ZO-clipped-med-SSTM}, we follow \citep{kornilov2023accelerated} and conduct experiments on the following  problem:
\vspace{-0.15cm}  $$ \min_{x\in \mathbb{R}^{d}} \{ \| Ax - b \|_2 +  \langle \xi, x \rangle\}, \vspace{-0.15cm}$$
where  $\xi$ is a random vector with independent components sampled from the symmetric Levy $\alpha$-stable distribution with different $\alpha = 0.75, 1.0, 1.25, 1.5$,  $A \in \mathbb{R}^{l\times d}$, $b \in \mathbb{R}^{l}$.  Note, that $\alpha$ has the same meaning as $\kappa$, because this distribution asymptotic behavior is $f(x)\sim \frac{1}{|x|^{1+\alpha}}$ for $\alpha < 2$.

The best median size for our \algname{ZO-clipped-med-SSTM} is $m = 2$. We compare it with the median size $m = 0$ which is basically \algname{ZO-clipped-SSTM}.  We additionally compare our algorithm with \algname{ZO-clipped-SGD} from \citep{kornilov2023accelerated} and \algname{ZO-clipped-med-SGD} ~--- version of \algname{ZO-clipped-SGD} with gradient estimation step replaced with median clipping version from our work. In Appendix \ref{sec:ZO extra exp}, we provide technical details about hyperparameters.

\subsubsection{Comparison with the competitors} To make a fair comparison of the methods' performance, we conduct the experiment with $\kappa = 1$ over $15$ launches and present the results in Figure~\ref{fig:zo-clipped-sstm_comparison_synth}. 
\begin{figure}[!h]
    \centering
    \includegraphics[width=0.6\textwidth]{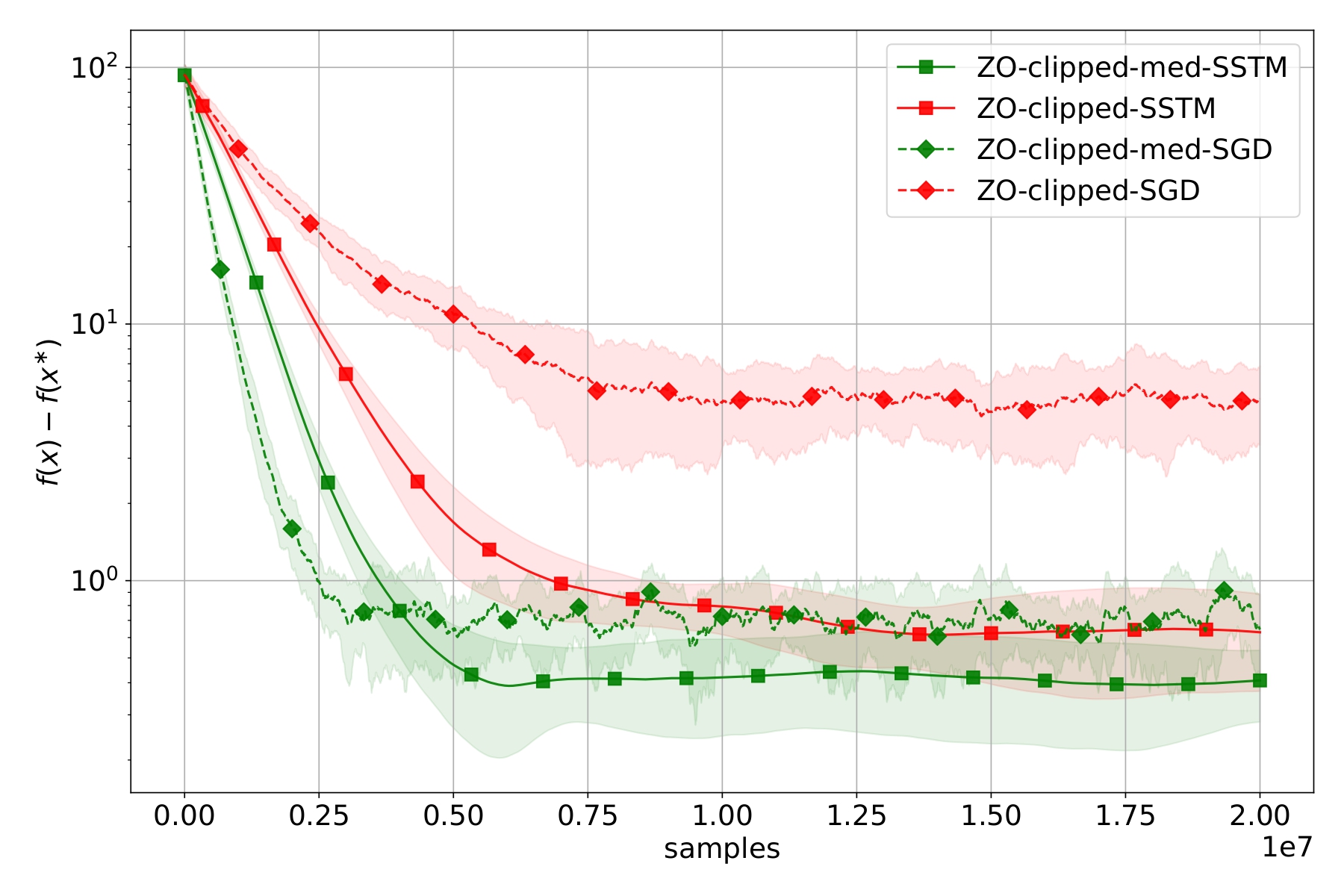}
    \caption{Convergence of our \algname{ZO-clipped-SSTM} and \algname{ZO-clipped-med-SSTM}, \algname{ZO-clipped-SGD}, \algname{ZO-clipped-med-SGD} over $15$ launches.}
    \label{fig:zo-clipped-sstm_comparison_synth}
\end{figure}
The $\kappa$ dependency results over $3$ launches are presented in Figure \ref{fig:zo-clipped-med-sstm_comparison_synth_different_kappa}. 

The green lines on the graphs represent algorithms with median clipping. We can see that for extremely noised data $\kappa \leq 1$, our median clipping-based methods significantly outperform non-median versions. While, for standard heavy-tailed noise $\kappa > 1$, our methods do not lose to other competitors. 
\begin{figure*}[!h]
    \centering
    \includegraphics[scale=0.17]{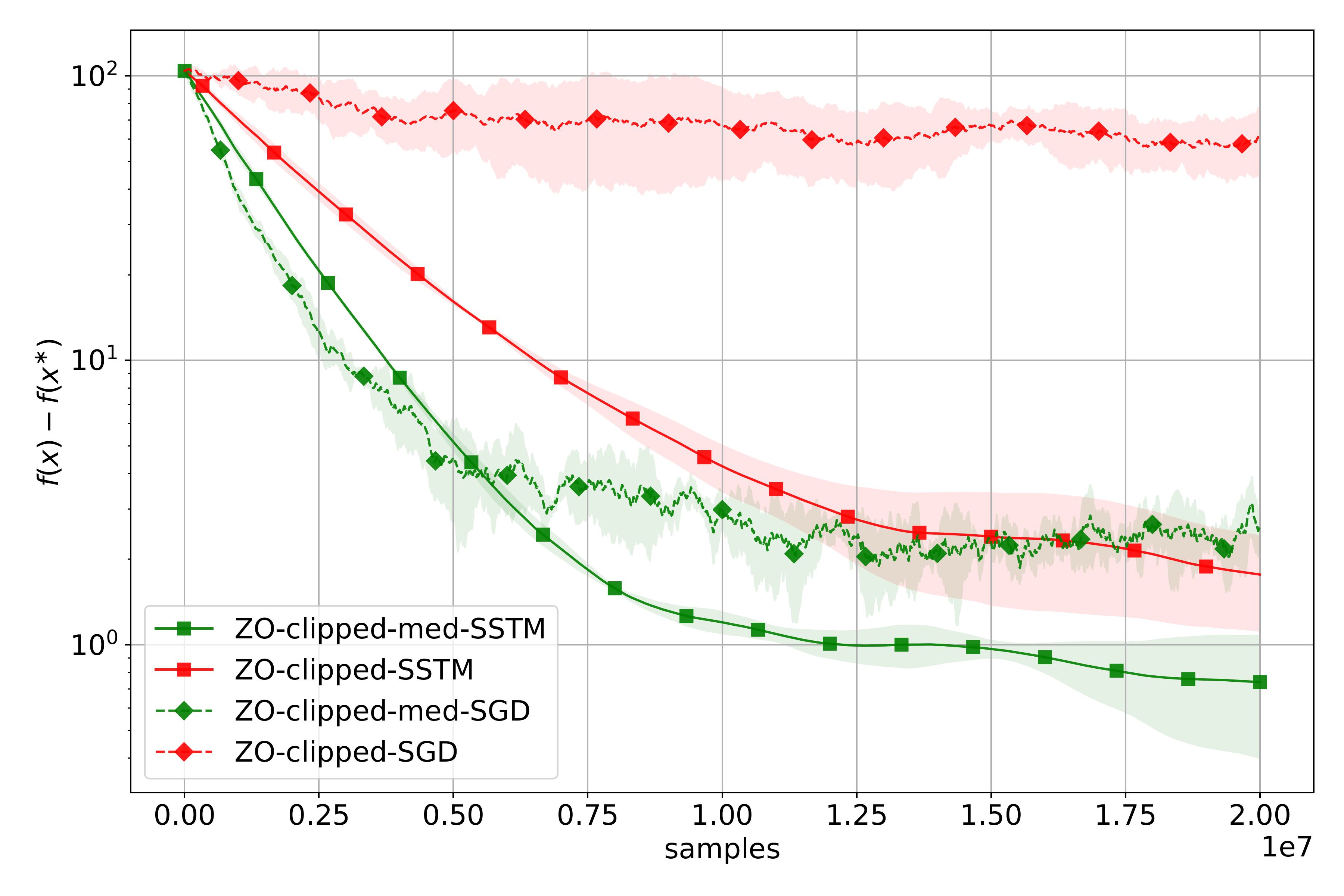}
    \includegraphics[scale=0.17]{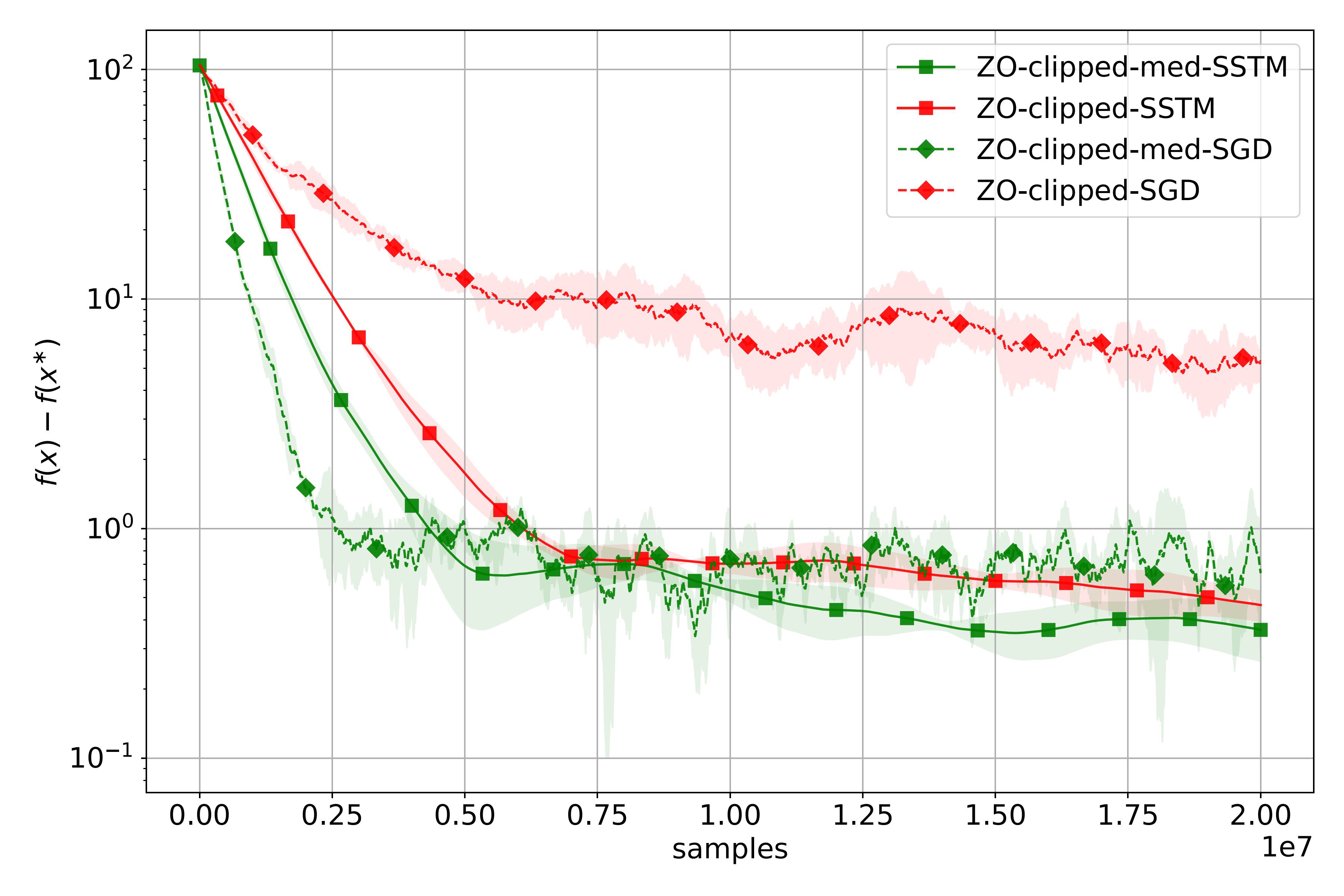}
    \includegraphics[scale=0.17]{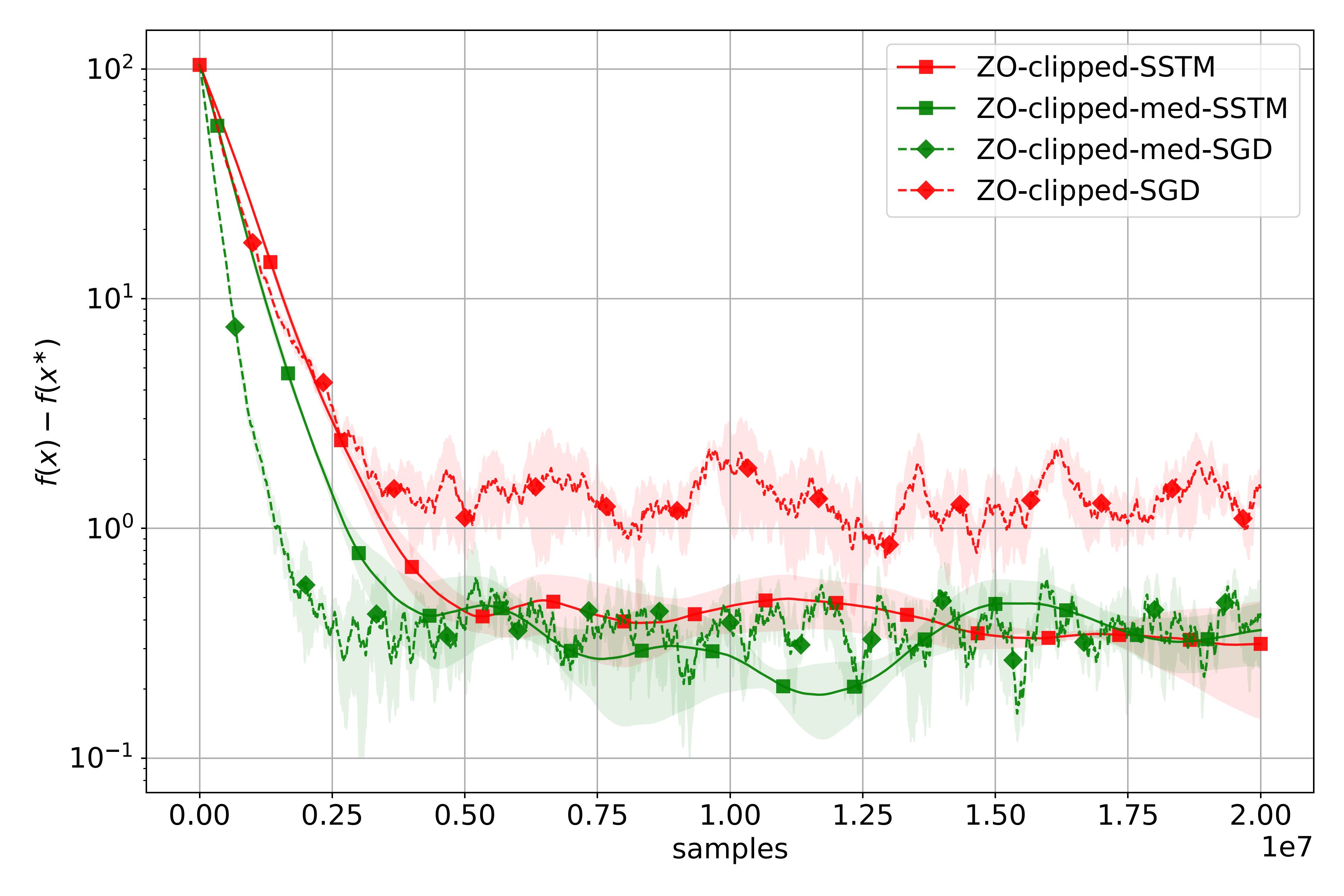}
    \includegraphics[scale=0.17]{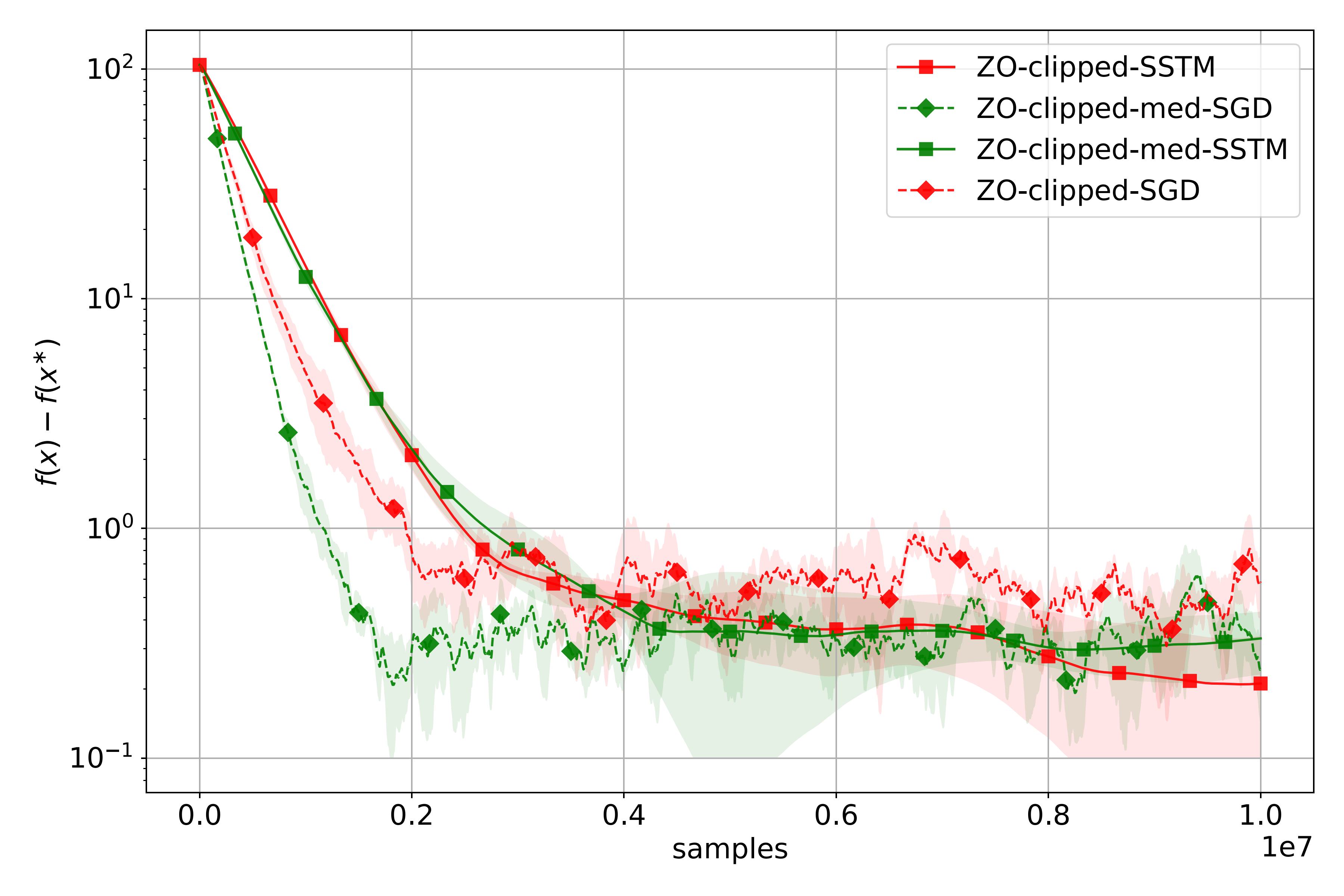}
    \caption{Convergence of our \algname{ZO-clipped-SSTM} and \algname{ZO-clipped-med-SSTM}, \algname{ZO-clipped-SGD}, \algname{ZO-clipped-med-SGD} in terms of a gap function w.r.t. the number of used samples from the dataset for different $\alpha = \kappa$ parameters (left-to-right and top-to-bottom: 0.75, 1.0, 1.25, 1.5).}
    \label{fig:zo-clipped-med-sstm_comparison_synth_different_kappa}
\end{figure*}
\vspace{-0.1cm}

\subsubsection{Asymmetric noise} \label{sec: sym and asym} To check the dependence on the addition of an asymmetric part to the noise, we replace the noise $\xi$ with $\xi = w * \xi_1 + (1-w) * |\xi_2|$ with $\xi_1$ drawn from a symmetric Levy $\alpha$-stable distribution with $\alpha = 1.0$ and $\xi_2$ being a random vector with independent components sampled from
\begin{itemize}
    \item the same distribution;
    \item standard normal distribution.
\end{itemize}
For $w$, we consider $0.9$ (the weight of symmetric noise is bigger) and $0.5$ (equal impact). We take a component-wise absolute values of $\xi_2$, which makes $w$ a mix of symmetric and asymmetric noise. The results are presented in Figures \ref{fig:zo-clipped-med-sstm_comparison_Levy_assym_mix} (Levy noise) and \ref{fig:zo-clipped-med-sstm_comparison_normal_assym_mix} (normal noise).

\begin{figure*}[!h]
    \centering
    \includegraphics[scale=0.06]{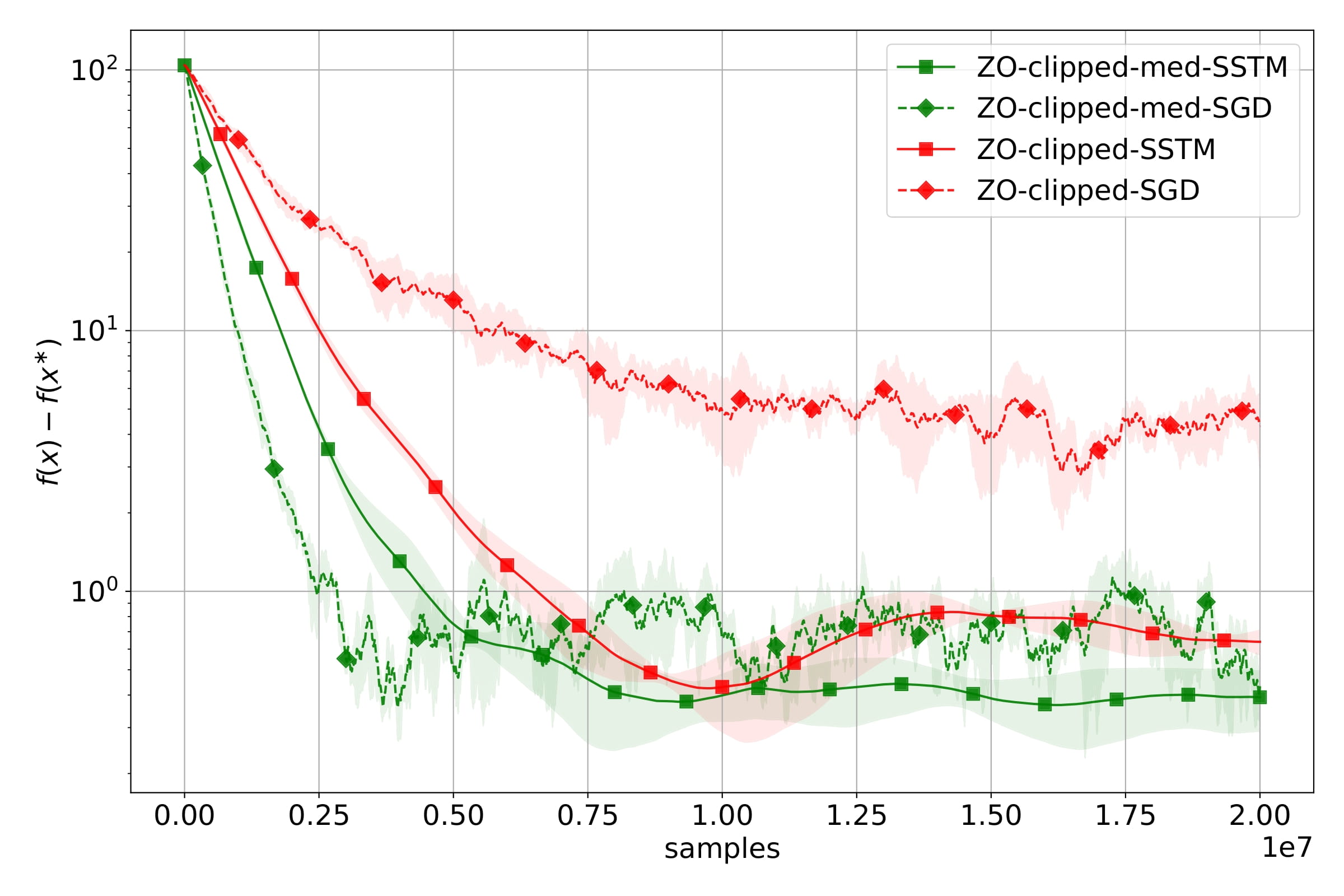}
    \includegraphics[scale=0.06]{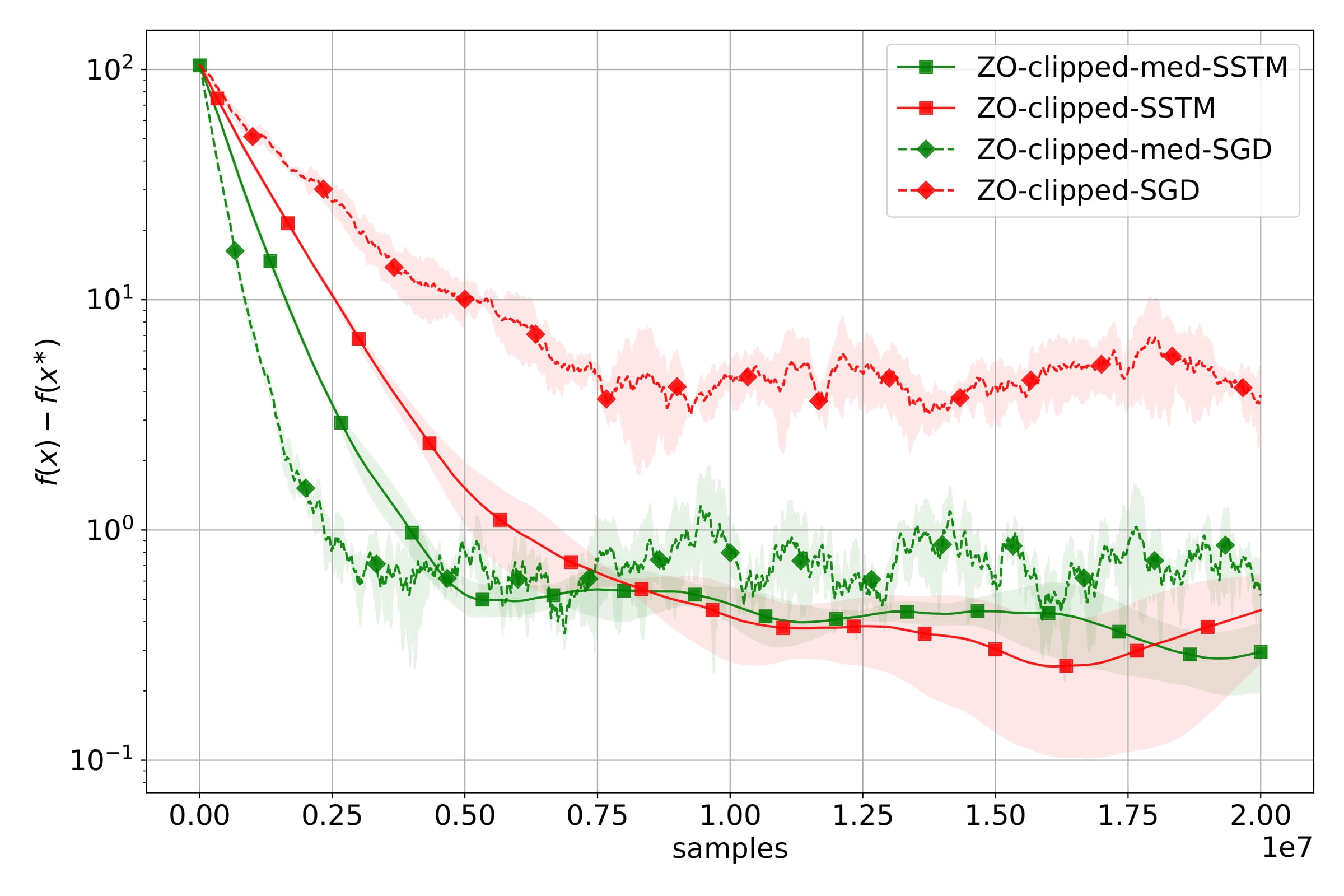}
    \caption{Convergence of our \algname{ZO-clipped-SSTM} and \algname{ZO-clipped-med-SSTM}, \algname{ZO-clipped-SGD}, \algname{ZO-clipped-med-SGD} with asymmetric Levy noise addition with weight of symmetric part of $0.9$ and $0.5$ on left and right, respectively.}
    \label{fig:zo-clipped-med-sstm_comparison_Levy_assym_mix}
\end{figure*}

\begin{figure*}[!h]
    \centering
    \includegraphics[scale=0.06]{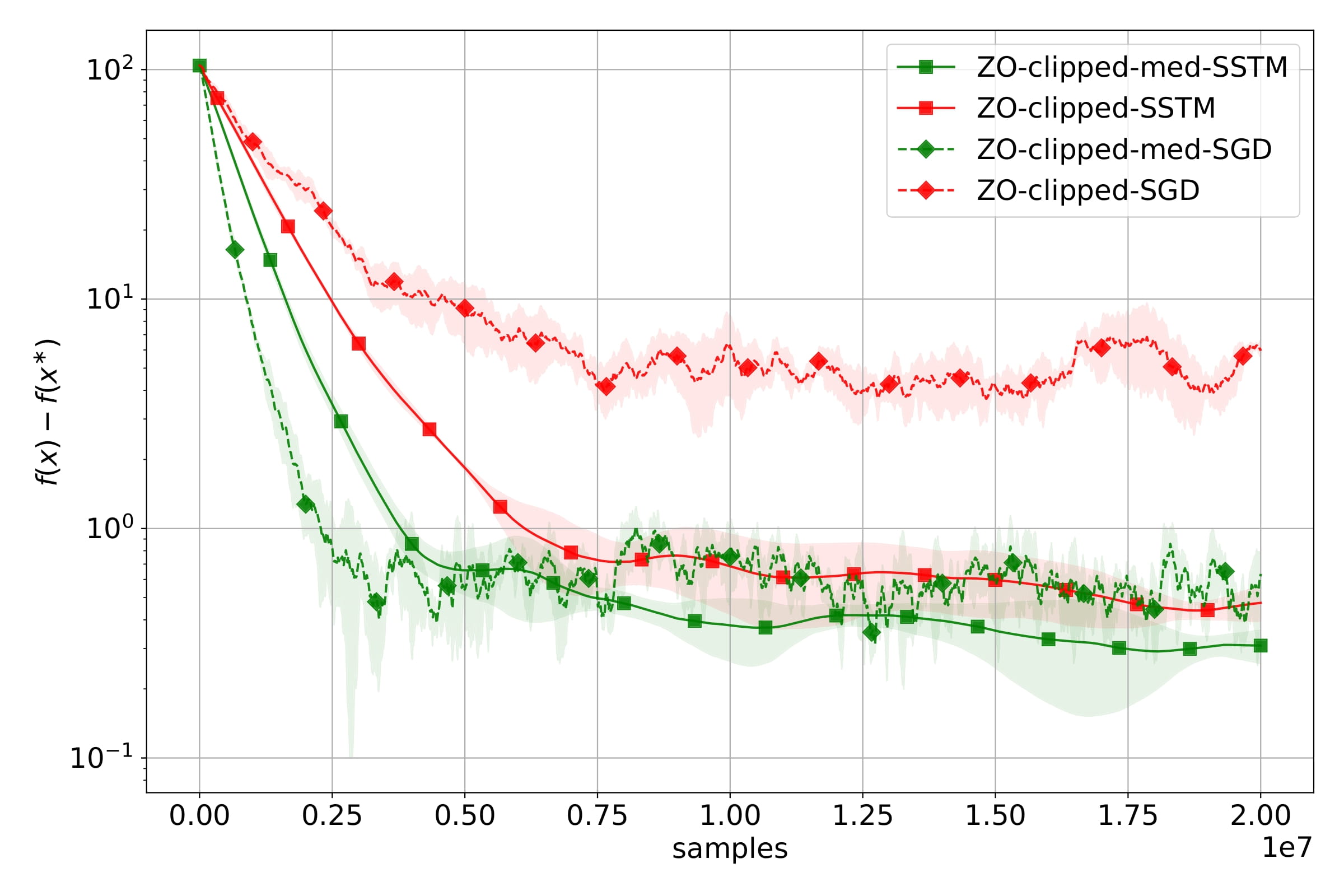}
    \includegraphics[scale=0.06]{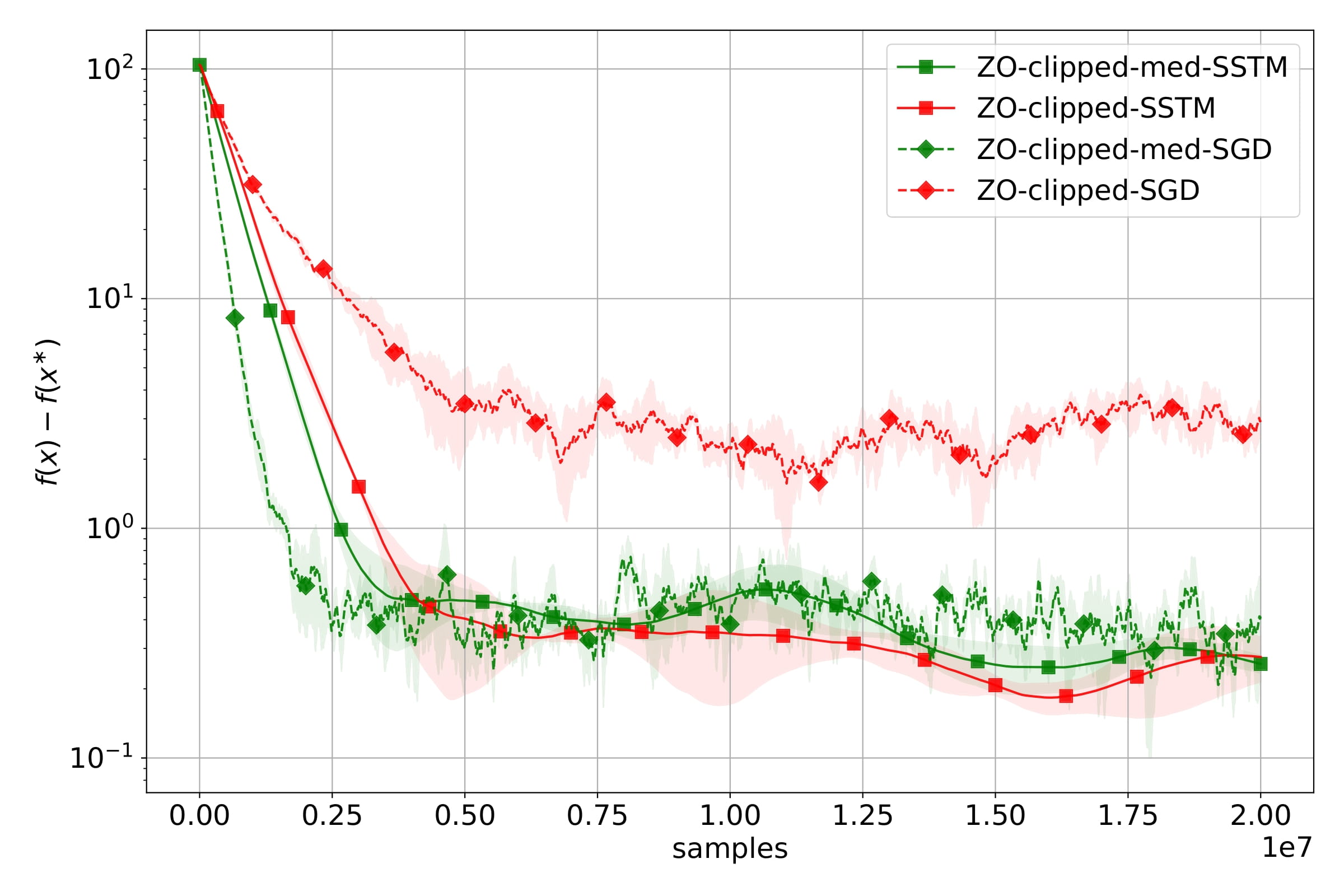}
    \caption{Convergence of our \algname{ZO-clipped-SSTM} and \algname{ZO-clipped-med-SSTM}, \algname{ZO-clipped-SGD}, \algname{ZO-clipped-med-SGD} with asymmetric normal noise addition with weight of symmetric part of $0.9$ and $0.5$ on left and right, respectively.}
    \label{fig:zo-clipped-med-sstm_comparison_normal_assym_mix}
\end{figure*}

\textbf{Tuning of the median size $m.$} In experiments with both bandits and ZO methods, we grid search the median size $m$ among the range [3,5,7]. We noticed that unlike the choice of continuous the clipping level, the choice of the discrete median size only slightly affects the convergence and does not require careful fine-tuning. This range is enough to find an optimal median size for optimal convergence.\color{black}
\vspace{-0.3cm}
\section{Discussion}\label{sec:limitations}

\subsection{Limitations}
\vspace{-0.1cm}
\textbf{Symmetric noise.} The assumption of the symmetric noise can be seen as a limitation from a practical point of view. It is indeed the case, but we argue that it is not as severe as it looks. 
A common strategy to solve a general optimization problem is to run several algorithms in a competitive manner to see which performs better in practice. This approach is implemented in industrial solvers such as Gurobi. Thus, if we have different algorithms, each suited to its own conditions, we can simply test to see which one is faster for our particular case. In this scenario, we want a set of algorithms, each designed for its specific case. Our algorithm can serve as one of the options in such mix, since it provides considerable acceleration in a significant number of noise cases.  Moreover, in experiments with non-symmetric noises ($\S$\ref{sec: sym and asym}), our methods do not lose to the baselines. Hence, running our methods ends up with either typical convergence rates or faster rates for symmetric noises. 

One of the most standard and widely applied assumptions in MAB Literature is assumption of normal normality. Normal distributions belong to symmetric noise, which we consider. Hence, our work has huge potential impact for the field.       

\textbf{Do we need to know the explicit $\kappa$?} In our Theorems \ref{thm:SSTM col}, \ref{thm:SMD col}, \ref{avrregretest}, parameter $\kappa$ is required to set optimal median size  $m = \frac{2}{\kappa} + 1$. However, for the most common cases $\kappa$ is at least $1$ (i.e. expectation exists), hence we could take median size $m = 3$. In case when parameter $\kappa \to 0$, we leave the construction of an adaptive scheme \citep{huang2022adaptive} for future work. In practice, the choice of $m$ can be limited to a small, discrete range. 
\color{black}


\subsection{Comparison with the previous works}
Unlike the baselines  \algname{ZO-clipped-SSTM} \citep{kornilov2023accelerated} and \algname{APE} \citep{lee2020optimal}, \algname{HTINF} \citep{huang2022adaptive} with simple clipping and general heavy-tailed noise assumption $\kappa \in (1,2]$, our Algorithms \ref{alg:clipped-SSTM}, \ref{alg:SMD}, \ref{alg:bandits} with median clipping can work under extremely heavy-tailed noises $\kappa \leq 1$. For any $\kappa>0$, iterative complexity of our methods remains as if noise had bounded variance, namely, $\tilde{O}(d^2\varepsilon^{-2})$ iterations to achieve function accuracy or average regret$~\varepsilon$. In contrast, the best-known baselines' rates $\tilde{O}((\sqrt{d}\varepsilon^{-1})^{\frac{\kappa}{\kappa - 1}})$ deteriorate depending on $\kappa$. However, such breaking results can be guaranteed only for symmetric noises, which is not as serious limitation as it seems. We show that, for asymmetric noises, our methods in practice are competitive as well and perform at the same level as the baselines ($\S$\ref{sec: sym and asym}). 

 \color{black}
 \vspace{-0.2cm}

\section{Acknowledgements}
This work was supported by a grant, provided by the Ministry of Economic Development of the Russian Federation in accordance with the subsidy agreement (agreement identifier 000000C313925P4G0002) and the agreement with the Ivannikov Institute for System Programming of the Russian Academy of Sciences dated June 20, 2025 No. 139-15-2025-011.
\newpage


\bibliography{refs}
\bibliographystyle{apalike} 

\newpage
\appendix

\onecolumn
\section{Remarks about the assumption on the noise} \label{par: remarks on as}

In this section, we discuss our novel noise Assumption  \ref{as:dist}. We provide comparison with previous works (Remark \ref{rem:diff_as}), standard examples (Remark \ref{rem:examples}) and explain the roles of parameters (Remark \ref{rmk:role}).  
\begin{remark}[Comparison with previous assumptions]\label{rem:diff_as}
    In works \citep{Dvinskikh_2022, kornilov2023accelerated}, different assumption on Lipschitz noise is considered. For any realization of $\xi$,  the function $f(x, \xi)$ is $M'_2(\xi)$-Lipschitz, i.e.,
    \begin{equation}\label{eq:prev_noise_as}
        |f(x,\xi) - f(y, \xi)| \leq M'_2(\xi)\|x-y\|_2, \quad \forall x, y \in Q
    \end{equation}
    and $M'_2(\xi)^\kappa$ has bounded $\kappa$-th moment ($\kappa$ > 1), i.e., $[M'_2]^{\kappa} \eqdef \EE_\xi[M'_2(\xi)^\kappa] < \infty$.

    We emphasize that if Assumption~\ref{as:dist} holds with $\kappa$ then one can find $M'_2(\xi, x, y)$ such that \eqref{eq:prev_noise_as} holds for any $1 < \kappa' < \kappa$ with $M'_2 = O(M_2 + \Delta)$, where constant in $O(\cdot)$ depends only on $\kappa'$.    
\end{remark}
\begin{proof}
    Let noise $\phi(\xi| x,y)$ satisfies Assumption~\ref{as:dist} with Lipschitz oracle and $\kappa > 1$, then it holds
\begin{eqnarray}
    |f(x, \xi) - f(y, \xi)| &=& | f(x) - f(y) + \phi(\xi| x,y) | \notag \\
    &\leq& | f(x) - f(y)| + | \phi(\xi| x,y)| \notag \\
    &\overset{\text{As~\ref{as:Lipshcitz}}}{\leq}& M_2 \|x - y\|_2 + \frac{|\phi(\xi| x,y)|}{\|x-y\|_2}\|x-y\|_2. \notag
\end{eqnarray}
Let us denote $M'_2(\xi, x, y) \eqdef M_2 + \frac{| \phi(\xi| x,y)|}{\|x-y\|_2}$ and show that for any $1 < \kappa' < \kappa $ random variable $M'_2(\xi, x, y)$ has bounded $\kappa'$-th moment which doesn't depend on $x,y$. We notice that 
\begin{eqnarray*}
    \EE_\xi[|\phi(\xi| x,y)|^{\kappa'}] &=& \int \limits_{-\infty}^{+\infty} |u|^{\kappa'}p(u|x,y) du \\
    &\leq& \int \limits_{-\infty}^{+\infty} \frac{|u| ^{\kappa'} \gamma^\kappa|B(x,y)|^\kappa}{|B(x,y)|^{1+\kappa} + |u|^{1+\kappa}} du.
\end{eqnarray*}
After substitution $t = \nicefrac{u}{|B(x,y)|}$, we get
\begin{eqnarray*}
    \EE_\xi[|\phi(\xi| x,y)|^{\kappa'}] &\leq& \frac{\gamma^\kappa|B(x,y)|^{\kappa}}{|B(x,y)|^{\kappa - \kappa'}}\int \limits_{0}^{+\infty} \frac{|t| ^{\kappa'}}{1 + |t|^{1+\kappa}}  dt\\
    &\overset{\eqref{eq:noise_lip}}{\leq}& \gamma^{\kappa - \kappa'}\Delta^{\kappa'} \|x-y\|_2^{\kappa'}\int \limits_{0}^{+\infty} \frac{|t| ^{\kappa'}}{1 + |t|^{1+\kappa}}  dt.\\
\end{eqnarray*}
Integral $I(\kappa') = \int \limits_{0}^{+\infty} \frac{\gamma^{\kappa - \kappa'} |t| ^{\kappa'} dt}{1 + |t|^{1+\kappa}}$ converges since $\kappa' < \kappa$ but its value tends to $\infty $ as $\kappa' \to \kappa - 0$. 
Finally, we have
\begin{eqnarray*}
    \EE_\xi[M'_2(\xi, x, y)^{\kappa'}] &=& \EE_\xi\left[\left| M_2 + \frac{|\phi(\xi| x,y)|}{\|x - y\|_2}\right| ^{\kappa'}\right] \\
    &\overset{\text{Jensen inq, $\kappa' > 1$}}{\leq}& 2^{\kappa' - 1} \left[ M_2^{\kappa'}+ \frac{\EE_\xi\left[ |\phi(\xi| x,y)|^{\kappa'}\right]}{\|x - y\|_2^{\kappa'}}\right]\\
     &\leq&2^{\kappa' - 1} \left[ M_2^{\kappa'}+     I(\kappa') \Delta^{\kappa'}\right].
\end{eqnarray*}
Hence, $M'_2\!=\! (\EE_\xi[M'_2(\xi, x, y)^{\kappa'}])^\frac{1}{\kappa'}\! =\! O(M_2 + \Delta)$, where constant in $O(\cdot)$ depends only on $\kappa'$. 
\end{proof}

\begin{remark}[Role of the scale function $B(x,y)$]\label{rmk:role}
    In inequality \eqref{eq:dist_ass} due to normalization property of probability density, we must ensure that
    \begin{equation*}
        \int \limits_{-\infty}^{+\infty} \frac{\gamma^\kappa |B(x,y)|^{\kappa}}{|B(x,y)|^{1+\kappa} + |u|^{1+\kappa}} du \geq \int \limits_{-\infty}^{+\infty} p(u|x,y) du  = 1.
    \end{equation*}
    One can make substitution $t = u/|B(x,y)|$ and ensure that for $\kappa \leq 2$
    \begin{equation*}
        \int \limits_{-\infty}^{+\infty} \frac{\gamma^\kappa |B(x,y)|^{\kappa} du}{|B(x,y)|^{1+\kappa} + |u|^{1+\kappa}}  =  \gamma^\kappa\int \limits_{-\infty}^{+\infty} \frac{dt}{ 1 + |t|^{1+\kappa}} \overset{\kappa = 1}{\geq} \gamma^\kappa \pi .
    \end{equation*}
    Hence,  $\gamma$ is sufficient to satisfy
    \begin{eqnarray*}
        \gamma \geq \left(\frac{1}{\pi}\right)^\frac{1}{\kappa}.
    \end{eqnarray*}

    As scale value $|B(x,y)|$ decreases, quantiles of $p(u|x,y)$ gets closer to zero. Therefore, $|B(x,y)|$ can be considered as analog of variance of distribution $p(u|x,y)$.
\end{remark}

\begin{remark}[Standard oracles examples]\label{rem:examples}
    To build noise $\phi(\xi|x,y)$ satisfying Assumption~\ref{as:dist} with $\kappa > 0$ we will use independent random variables $\{\xi_k\}$ with symmetric probability density functions $p_{\xi_k}(u)$
    \begin{eqnarray*}
        p_{\xi_k}(u) \leq \frac{|\gamma_k \Delta_k|^\kappa}{|\Delta_k|^{1 + \kappa} + |u|^{1+\kappa}}, \quad \Delta_k, \gamma_k > 0,    
    \end{eqnarray*}
    such that for any real numbers $\{a_k\}_{k=1}^n$ and sum $\sum \limits_{k=1}^n a_k \xi_k$ it holds

    \begin{eqnarray}\label{eq:addition_law}
        p_{\sum \limits_{k=1}^n a_k\xi_k} (u) \leq \frac{\left(\sum \limits_{k=1}^n| \gamma_k a_k   \Delta_k|\right)^\kappa}{\left(\sum \limits_{k=1}^n|a_k \Delta_k|\right)^{1 + \kappa} + |u|^{1+\kappa}}.
    \end{eqnarray}
    Moreover, using Cauchy-Schwarz inequality  we bound 
    \begin{equation}\label{eq:Cauchy_sch}
        \sum \limits_{k=1}^n| \gamma_k a_k   \Delta_k| \leq  \|(\gamma_1\Delta_1, \dots, \gamma_n\Delta_n)^\top\|_2 \cdot \|(a_1, \dots, a_k)^\top\|_2.
    \end{equation}
    For example, variables $\xi_k$ can have Cauchy distribution with $\kappa = 1$ and $p(u) = \frac{1}{\pi}\frac{\Delta_k}{\Delta_k^2 + u^2}$ parametrized by scale $\Delta_k$. For the independent Cauchy variables with scales $\{\Delta_k\}_{k=1}^n$ and any real numbers $\{a_k\}_{k=1}^n$, the sum $\sum \limits_{k=1}^n a_k \xi_k$ is the Cauchy variable with scale $\sum \limits_{k=1}^n |a_k| \Delta_k$.  Therefore, inequality \eqref{eq:addition_law} for Cauchy variables holds true. For oracles, we have:

    \begin{itemize}
        \item \textbf{Independent oracle:}
        
        $f(x, \xi) = f(x) + \xi_x, f(y,  \xi) = f(y) + \xi_y, \phi(\xi| x,y) = \xi_x - \xi_y$, where $\xi_x, \xi_y$ are independent samples for each point $x$ and $y$. Thus, we have the final scale $\Delta = \Delta_x  + \Delta_y.$

        \item \textbf{Lipschitz oracle:}

        $f(x, \boldsymbol{\xi}) = f(x) + \langle \boldsymbol{\xi}, x \ra , f(y, \boldsymbol{\xi}) = f(y) + \langle \boldsymbol{\xi}, y \ra, \phi(\boldsymbol{\xi}| x, y) = \la \boldsymbol{\xi}, x - y \ra$, where $\boldsymbol{\xi}$ is $d$-dimensional random vector with components $\xi_k$. Oracle gives the same realization of $\boldsymbol{\xi}$  for both $x$ and $y$. In that case, the vector $\boldsymbol{\xi}$ can be restated to $\boldsymbol{\xi} = A\boldsymbol{\xi}_{ind}$ with $\phi(\boldsymbol{\xi}| x, y) = \la \boldsymbol{\xi}_{ind}, A^\top(x - y)\ra$, where $A$ is the correlation matrix and $\boldsymbol{\xi}_{ind}$ are independent Cauchy variables. Now, if the vector $\boldsymbol{\xi}_{ind}$ has scales $\{\Delta_k\}_{k=1}^n$, then we have $\gamma$ and $B(x,y)$ from Assumption~\ref{as:dist} equal to \begin{eqnarray*}
            \gamma &=& \frac{1}{\pi}, \\
            B(x,y) &=& \sum \limits_{k=1}^d| \Delta_k [A^\top(x-y)]_k| \overset{\eqref{eq:Cauchy_sch}}{\leq} \|(\Delta_1, \dots, \Delta_d)^\top\|_2 ||A^\top||_2||x-y||_2.
        \end{eqnarray*}
        
    \end{itemize}

\end{remark}

\section{Proofs}\label{sec:proofs}
\subsection{Proof of Median Estimate Properties Lemma~\ref{lem:batch median properties}.} \label{sec:lemma}
To begin with, we need some properties of the smoothed approximation  $\hat{f}_\tau$.

\begin{proposition}
[Strong convexity of $\hat{f}_\tau$] Consider $\mu$-strongly convex (As. \ref{as:f convex}) function $f$ on $Q \subseteq \R^d$. Then the smoothed function $\hat{f}_\tau$ defined in \eqref{hat_f} is also $\mu$-strongly convex on $Q$.
\end{proposition}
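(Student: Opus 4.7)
The plan is to show that strong convexity passes through the expectation operator defining $\hat{f}_\tau$. This is a completely standard averaging argument; the only thing that needs checking is that the translation by $\tau \mathbf{u}$ is compatible with the convex combination on the inside.

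First, I would fix arbitrary $x_1, x_2 \in Q$ and $\lambda \in [0,1]$, and write
\begin{equation*}
\lambda x_1 + (1-\lambda) x_2 + \tau \mathbf{u} = \lambda(x_1 + \tau \mathbf{u}) + (1-\lambda)(x_2 + \tau \mathbf{u}).
\end{equation*}
For each realization of $\mathbf{u}$, I would apply the defining inequality of $\mu$-strong convexity of $f$ (Assumption~\ref{as:f convex}) to the two points $x_1 + \tau \mathbf{u}$ and $x_2 + \tau \mathbf{u}$, noting that their difference is exactly $x_1 - x_2$, so the quadratic term $\tfrac{1}{2}\mu\lambda(1-\lambda)\|x_1 - x_2\|_2^2$ does not depend on $\mathbf{u}$.

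Then I would take expectation over $\mathbf{u} \sim U(B_1(0))$ on both sides. By linearity of expectation and the definition \eqref{hat_f}, the left-hand side becomes $\hat{f}_\tau(\lambda x_1 + (1-\lambda) x_2)$, while the right-hand side becomes
\begin{equation*}
\lambda \hat{f}_\tau(x_1) + (1-\lambda) \hat{f}_\tau(x_2) - \tfrac{1}{2}\mu\lambda(1-\lambda)\|x_1 - x_2\|_2^2,
\end{equation*}
which is exactly the $\mu$-strong convexity inequality for $\hat{f}_\tau$. One small technical point is that $f$ must be defined at the perturbed points $x_i + \tau \mathbf{u}$, but this is already arranged since the excerpt defines $f$ on the enlarged domain $Q + B_{2\tau}(0)$.

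I do not anticipate any real obstacle — the proof is a two-line computation and there is no hard step. The only thing worth a comment is that the argument works verbatim for $\mu = 0$, recovering plain convexity as a special case.
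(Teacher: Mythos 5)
Your proposal is correct and follows essentially the same argument as the paper: rewrite $\lambda x_1 + (1-\lambda)x_2 + \tau\mathbf{u}$ as a convex combination of the shifted points, apply $\mu$-strong convexity of $f$ pointwise (noting the quadratic term depends only on $x_1 - x_2$), and take expectation over $\mathbf{u}$. No gaps.
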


\begin{proof}
The function $f$ is $\mu$-strongly convex if we have $\forall x, y \in Q$ and $\forall t \in [0,1]$
$$f(x t + y (1-t)) \leq t \cdot f(x) + (1-t)  \cdot f(y) - \frac{1}{2}\mu t (1-t) \|x-y\|_2^2. $$
Following the definition of $\hat{f}_\tau$, we write down for $\mathbf{u} \in U(B_1(0))$ inequality
\begin{eqnarray*}
    f(xt + y (1-t) + \tau\mathbf{u} ) &=& f((x + \tau \mathbf{u}) \cdot t + (y + \tau \mathbf{u})  \cdot (1-t)) \\
    &\leq& t  \cdot f(x + \tau \mathbf{u}) + (1-t)  \cdot f(y + \tau \mathbf{u}) - \frac{1}{2}\mu t (1-t) \|x-y\|_2^2.
\end{eqnarray*}
Taking math expectation $\EE_{\mathbf{u}}$ from both sides, we have
$$\EE_{\mathbf{u}}[f(xt + y (1-t) + \tau\mathbf{u} )] \leq t  \cdot \EE_{\mathbf{u}}[f(x + \tau \mathbf{u})] + (1-t)  \cdot \EE_{\mathbf{u}}[f(y + \tau \mathbf{u})] - \frac{1}{2}\mu t (1-t) \|x-y\|_2^2. $$ 
\end{proof}

\begin{proof}[Proof of Lemma~\ref{lem:batch median properties}.]
First, we notice from our construction of the oracle that
$$f(x, \xi) - f(y, \xi) = f(x) - f(y)  + \phi(\xi| x,y), \quad \forall x,y \in Q, $$
and we have
\begin{eqnarray}
    g(x, \mathbf{e}, \xi) &=& \frac{d}{2\tau}(f(x + \tau \mathbf{e}, \xi) - f(x - \tau \mathbf{e}, \xi)) \notag \\
    &=& \frac{d}{2\tau} [f(x + \tau \mathbf{e}) - f(x - \tau \mathbf{e})]\mathbf{e} \notag + \frac{d}{2\tau }\phi(\xi|x + \tau \mathbf{e}, x - \tau \mathbf{e}) \mathbf{e} \notag
\end{eqnarray}  
and for $\texttt{Med}^m(x, \mathbf{e}, \{\xi\})$ we have
\begin{eqnarray}
     \texttt{Med}^m(x, \mathbf{e}, \{\xi\}) &=& \texttt{Median} \left( \left\{g(x, \mathbf{e}, \xi^i)\right\}_{i=1}^{2m+1}\right) \notag \\
    &=& \texttt{Median} \left( \left\{\frac{d}{2\tau} [f(x + \tau \mathbf{e}) - f(x - \tau \mathbf{e})]\mathbf{e}
    + \frac{d}{2\tau }\phi(\xi^i| x + \tau \mathbf{e}, x - \tau \mathbf{e})\mathbf{e}\right\}_{i=1}^{2m+1}\right) \notag \\
    &=&  \frac{d}{2\tau} [f(x + \tau \mathbf{e}) - f(x - \tau \mathbf{e})]\mathbf{e} \label{eq:med_prop_1_term}\\
    &+& \frac{d}{2\tau }\texttt{Median} \big(\big\{\phi(\xi^i| x + \tau \mathbf{e}, x - \tau \mathbf{e})\big\}_{i=1}^{2m+1} \big) \mathbf{e}.\label{eq:med_prop_2_term}
\end{eqnarray}
\noindent\textbf{Finite second moment:}
Further, we analyze two terms: gradient estimation term \eqref{eq:med_prop_1_term} and the noise term \eqref{eq:med_prop_2_term}. Following work \citep{kornilov2023gradient} [Lemma~$2.3$], we have the upper bound for the second moment of \eqref{eq:med_prop_1_term}
\begin{eqnarray}\label{eq:med_prop_1_upper}
    \EE_{\mathbf{e}} \left[ \left|\left|\frac{d}{2\tau} [f(x + \tau \mathbf{e}) - f(x - \tau \mathbf{e})]\mathbf{e}\right|\right|_q^2\right] \leq da_q^2M_2^2, 
\end{eqnarray}
where $a_q = d^{\frac1q - \frac12} \min\{\sqrt{32\ln d - 8}, \sqrt{2q - 1}\}$ is a special coefficient, such that,
\begin{equation}\label{eq:med_prop_a_q}
    \EE_{\mathbf{e}}[\|\mathbf{e}\|_q^2] \leq a_q^2.
\end{equation}
See Lemma 2.1 from \citep{gorbunov2022accelerated} and Lemma $8.4$ from \citep{kornilov2023gradient} for more details.

Next, we deal with noise term \eqref{eq:med_prop_2_term}. For symmetric variable  $\phi(\xi| x,y)$ for all $x,y \in Q$ under Assumption~\ref{as:dist} it holds
    \begin{eqnarray}
         p(u) \leq \frac{\gamma^\kappa |B(x,y)|^\kappa}{ |B(x,y)|^{1+\kappa} + |u|^{1+\kappa}}. \notag 
    \end{eqnarray}
Further, we prove that, for large enough $m$, noise term has finite variance. For this purpose, we denote $Y \eqdef \texttt{Median} \big(\big\{\phi(\xi^i|x,y)\big\}_{i=1}^{2m+1} \big)$ and cumulative distribution function of $Y$ 
    $$P(t) \eqdef \int\limits_{-\infty}^t p(u)du.$$
    Median of $2m+1$ i.i.d. variables distributed according to  $p(u)$ is $(m+1)$-th order statistic, which has probability density function 
    $$(2m+1)\begin{pmatrix}
        2m \\ m
    \end{pmatrix} P(t)^m (1 - P(t))^m p(t).$$
The second moment $\EE[Y^2]$  can be calculated via
    \begin{eqnarray*}
        \EE[Y^2] &=&\int \limits_{-\infty}^{+\infty} (2m+1)\begin{pmatrix}
        2m \\ m
    \end{pmatrix} t^2 P(t)^m(1 -  P(t))^m p(t) dt \\
        &\leq& (2m+1)\begin{pmatrix}
        2m \\ m
    \end{pmatrix} \sup_t\{t^2 P(t)^m(1 -  P(t))^m\}   \int \limits_{-\infty}^{+\infty} p(t) dt \\
        &\leq& (2m+1)\begin{pmatrix}
        2m \\ m
    \end{pmatrix} \sup_t\{t^2 P(t)^m(1 -  P(t))^m\} .
    \end{eqnarray*}
    For any $t < 0$, we have 
    \begin{eqnarray*}
        P(t) &=& \int\limits_{-\infty}^t p(u)du \leq  \int\limits_{-\infty}^t \frac{|\gamma B(x,y)|^\kappa}{|B(x,y)|^{1+\kappa} + |u|^{1+\kappa}} \notag \\
        &\leq&  \int\limits_{-\infty}^t \frac{|\gamma B(x,y)|^\kappa}{|u|^{1+\kappa}} \leq \frac{| \gamma B(x,y)|^\kappa}{\kappa } \cdot \frac{1}{|t|^\kappa}.
    \end{eqnarray*}   
    Similarly, one can prove that for any $t > 0$
    $$1 -  P(t) = \int\limits^{\infty}_t p(u)du \leq \frac{|\gamma B(x,y)|^\kappa}{\kappa } \cdot \frac{1}{t^\kappa}.$$
    Since for any number $a \in [0,1]$ holds $a(1 - a) \leq \frac14$ we have for any $t \in \R$
    $$P(t)(1 -  P(t)) \leq \min\left\{\frac14, \frac{|\gamma B(x,y)|^\kappa}{\kappa } \cdot \frac{1}{|t|^\kappa}\right\}$$
    along with
    \begin{eqnarray}
         t^2 P(t)^m(1 -  P(t))^m \leq \min\left\{\frac{t^2}{4^m}, \left(\frac{|\gamma B(x,y)|^\kappa}{\kappa }\right)^m \cdot \frac{1}{|t|^{m\kappa - 2}}\right\}.\label{eq:med_prop_under_int}
    \end{eqnarray}
    If $m\kappa > 2$ the first term of \eqref{eq:med_prop_under_int} increasing and the second one decreasing with the growth of $|t|$, then the maximum of the minimum \eqref{eq:med_prop_under_int} is achieved when 
    \begin{eqnarray*}
        \frac{t^2}{4^m} &=& \left(\frac{|\gamma B(x,y)|^\kappa}{\kappa }\right)^m \cdot \frac{1}{|t|^{m\kappa - 2}}, \\
        |t| &=& |\gamma B(x,y)|\left( \frac{4}{\kappa }\right)^\frac1\kappa.
    \end{eqnarray*}
    Therefore, we get for any $t \in \R$
    $$ t^2 P(t)^m(1 -  P(t))^m \leq \frac{|\gamma B(x,y)|^2}{4^m} \left( \frac{4}{\kappa }\right)^\frac2\kappa,$$
    and, as a consequence
    \begin{eqnarray*}   
    \EE[Y^2] \leq  (2m+1)\begin{pmatrix}
        2m \\ m
    \end{pmatrix}\frac{|\gamma B(x,y)|^2}{4^m} \left( \frac{4}{\kappa }\right)^\frac2\kappa.
    \end{eqnarray*} 
It only remains to note
    \begin{eqnarray*}
        \begin{pmatrix}
        2m \\ m
    \end{pmatrix} = \frac{(2m)!}{m! \cdot m!} = \prod \limits_{j=1}^m \frac{2j}{j} \cdot \prod \limits_{j=1}^m \frac{2j - 1}{j} \leq 4^m. 
    \end{eqnarray*}
Since $Y$ has the finite second moment, it has finite math expectation
$$\EE[Y] = \int \limits_{-\infty}^{+\infty} (2m+1)\begin{pmatrix}
        2m \\ m
    \end{pmatrix} t P(t)^m(1 -  P(t))^m p(t) dt.$$
    For any $t \in \R$, due to symmetry of $p(t)$, we have $P(t) = (1 - P(-t))$ and $p(t) = p(-t)$ and, as a consequence,
$$\EE[Y] = \int \limits_{-\infty}^{+\infty} (2m+1)\begin{pmatrix}
        2m \\ m
    \end{pmatrix} t P(t)^m(1 -  P(t))^m p(t) dt = 0.$$
    Finally, we have an upper bound for \eqref{eq:med_prop_2_term}
    \begin{eqnarray}
        \EE_{\mathbf{e}, \xi}\left|\left|\frac{d}{2\tau }\texttt{Median} \big(\big\{\phi(\xi^i |x + \tau \mathbf{e},x - \tau \mathbf{e})\big\} \big) \mathbf{e} \right|\right|_q^2 
        &=& \left(\frac{d}{2\tau}\right)^2 \EE_{\mathbf{e}}[ \EE_\xi[Y^2| \mathbf{e}] \cdot \|\mathbf{e}\|_q^2] \notag\\
        &\leq&   \left(\frac{d}{2\tau}\right)^2  (2m+1) \left( \frac{4}{\kappa }\right)^\frac2\kappa \cdot  \EE_\mathbf{e}[|\gamma B(x + \tau \mathbf{e}, x - \tau \mathbf{e} )|^2\|\mathbf{e}\|_q^2]. \notag\label{eq:med_prop_exp} 
    \end{eqnarray}
In case of the \textbf{independent} oracle, we simplify using Assumption~\ref{as:dist} and \eqref{eq:Tsubakov}
\begin{eqnarray}
    \EE_\mathbf{e}[|\gamma B(x + \tau \mathbf{e}, x - \tau \mathbf{e} )|\|\mathbf{e}\|_q^2] \leq \Delta^2 \EE_\mathbf{e}[\|\mathbf{e}\|_q^2] \overset{\eqref{eq:med_prop_a_q}}{\leq} \Delta^2 a_q^2. \label{eq:med_prop_one_bound}
\end{eqnarray}
In case of the \textbf{Lipschitz} oracle, we use  \eqref{eq:noise_lip} and get
    \begin{eqnarray}
    \EE_\mathbf{e}[|\gamma B(x + \tau \mathbf{e}, x - \tau \mathbf{e} )|\|\mathbf{e}\|_q^2] &\leq& 4\Delta^2\tau^2  \EE_\mathbf{e}[\|\mathbf{e}\|_2^2\|\mathbf{e}\|_q^2] \overset{\eqref{eq:med_prop_a_q}}{\leq} 4\Delta^2\tau^2 a_q^2. \notag \label{eq:med_prop_lip_bound}
\end{eqnarray}
    Combining upper bounds \eqref{eq:med_prop_1_upper}
and \eqref{eq:med_prop_one_bound} or \eqref{eq:med_prop_lip_bound}, we obtain total bound
\begin{eqnarray*}
    \EE_{\mathbf{e}, \xi}[\|\texttt{Med}^m(x, \mathbf{e}, \{\xi\})\|_q^2] &\leq& 2 \cdot \eqref{eq:med_prop_1_upper}  + 2 \cdot \eqref{eq:med_prop_one_bound}\\\eqref{eq:med_prop_lip_bound}.
\end{eqnarray*}
    For the batched gradient estimation $\texttt{BatchMed}^{m}_b(x, \{\mathbf{e}\}, \{\xi\})$ and $q = 2$, we use Lemma~$4$ from \citep{kornilov2023accelerated} that states
    \begin{eqnarray*}
        &&\EE_{\mathbf{e}, \xi}[\|\texttt{BatchMed}^{m}_b(x, \{\mathbf{e}\}, \{\xi\})\|_2^2] \leq \frac{1}{b}  \cdot \EE_{\mathbf{e}, \xi}[\|\texttt{Med}^m(x, \mathbf{e}, \{\xi\})\|_2^2].
    \end{eqnarray*}
    For the bound of the centered second moment, we use Jensen's inequality for any random vector $X$  
    $$\EE[||X - \EE[X]||_q^2] \leq 2\EE[||X||_q^2] + 2||\EE[X]||_q^2 \leq 4\EE[||X||_q^2].$$
\textbf{Unbiasedness:} According to Lemma~\ref{lem:hat_f properties}, the term \eqref{eq:med_prop_1_term} is an unbiased estimation of the gradient $\nabla \hat{f}_\tau(x)$. Indeed, the distribution of $\mathbf{e}$ is symmetrical and we can derive
$$\EE_\mathbf{e}\left[ \frac{d}{2\tau} [f(x + \tau \mathbf{e}) - f(x - \tau \mathbf{e})]\mathbf{e}\right]  = \EE_\mathbf{e}\left[ \frac{d}{\tau} [f(x + \tau \mathbf{e})]\right]  = \nabla \hat{f}^\tau(x). $$
Since $Y$ has the finite second moment, it has finite math expectation
$$\EE[Y] = \int \limits_{-\infty}^{+\infty} (2m+1)\begin{pmatrix}
        2m \\ m
    \end{pmatrix} t P(t)^m(1 -  P(t))^m p(t) dt.$$
    For any $t \in \R$, due to symmetry of $p(t)$, we have $P(t) = (1 - P(-t))$ and $p(t) = p(-t)$ and, as a consequence,
$$\EE[Y] = \int \limits_{-\infty}^{+\infty} (2m+1)\begin{pmatrix}
        2m \\ m
    \end{pmatrix} t P(t)^m(1 -  P(t))^m p(t) dt = 0.$$
Hence, we obtained that $\EE_{\mathbf{e}, \xi} [\texttt{Med}^m(x, \mathbf{e}, \{\xi\})] = \nabla \hat{f}_\tau(x) $ along with $\EE_{\mathbf{e}, \xi} [\texttt{BatchMed}^{m}_b(x, \{\mathbf{e}\}, \{\xi\})] = \nabla \hat{f}_\tau(x)$ as the batching is the mean of random vectors with the same math expectation. 
\end{proof}

\subsection{Proof of \algname{ZO-clipped-med-SSTM} Convergence Theorem \ref{thm:SSTM col}} \label{par: sstm conv thms}

\begin{proof} For any point $x \in Q = B_{3R + 2\tau}{(x^*)}$, we can consider median estimations $\texttt{Med}^{m}(x, \mathbf{e}, \{\xi\})$ and  $\texttt{BatchMed}^{m}_b(x, \{\mathbf{e}\}, \{\xi\})$ to be an oracle for the gradient of $\hat{f}_\tau(x)$ that satisfies Assumption~\ref{as:oracle}.
\begin{assumption}\label{as:oracle}
    Let $G(x, \mathbf{e}, \xi)$ be the oracle for the gradient of function  $\hat{f}_\tau(x)$, such that for any point $x \in Q$ it is unbiased, i.e.,
    \begin{eqnarray}
        \EE_{\mathbf{e}, \xi} [G(x, \mathbf{e}, \xi)] = \nabla \hat{f}_\tau(x), \notag
    \end{eqnarray}
    and has bounded second moment, i.e.,
    \begin{eqnarray}\label{eq:oracle_bound main}
        \EE_{\mathbf{e}, \xi} [\|G(x, \mathbf{e}, \xi) - \nabla \hat{f}_\tau(x)\|^2_q] \leq \Sigma^2_q,
    \end{eqnarray}
    where $\Sigma_q$ might depend on $\tau$.
\end{assumption}
Thus, in order to prove convergence of our \algname{ZO-clipped-med-SSTM}, we use the general convergence theorem with oracle satisfying Assumption~\ref{as:oracle} for \algname{ZO-clipped-SSTM} (Theorem~$1$ from  \citep{kornilov2023accelerated} with $\alpha = 2$).  Next, we take $\texttt{BatchMed}^{m}_b(x, \{\mathbf{e}\}, \{\xi\})$ as the necessary oracle and substitute $\Sigma_2$ from \eqref{eq:oracle_bound main} with $\sigma/\sqrt{b}$ from Lemma~\ref{lem:batch median properties}.

\begin{theorem}[Convergence of \algname{ZO-clipped-SSTM}] We denote  $R = \|x^0 - x^*\|_2$, where $x^0$
is a starting point and $x^*$ is an optimal solution to \eqref{eq:min_problem}. Consider convex (As. \ref{as:f convex}) and $M_2$-Lipschitz (As. \ref{as:Lipshcitz}) function $f$ on $B_{3R}(x^*)$ with gradient oracle under As.~\ref{as:oracle} with $\Sigma_2$. 

We run \algname{ZO-clipped-SSTM} for $K$ iterations with smoothing parameter $\tau$, batch size $b$,  probability $1-\beta$ and further parameters $  A = \ln \nicefrac{4K}{\beta}\geq 1$,
$a = \Theta(\min\{A^2, \nicefrac{\Sigma_2 K^{2}\sqrt{A}\tau}{\sqrt{d b}M_2R}\}), \lambda_k = \Theta(\nicefrac{R}{(\alpha_{k+1}A)})$.
We guarantee that with probability at least $1-\beta$:
\begin{equation}
    f(y^K) - f(x^*) = 2M_2 \tau +  \widetilde{\cO}\left( \max\left\{\frac{\sqrt{d} M_2 R^2}{\tau K^2}, \frac{\Sigma_2 R}{\sqrt{bK }}\right\}\right). \label{eq:med_stmm conv before tau main}
\end{equation}
Moreover, with probability at least  $ 1-\beta$ the iterates of \algname{ZO-clipped-SSTM} remain in the ball with center $x^*$ and radius $2R$, i.e., $\{x^k\}_{k=0}^{K+1}, \{y^k\}_{k=0}^{K}, \{z^k\}_{k=0}^{K} \subseteq B_{2R}(x^*)$.
\end{theorem}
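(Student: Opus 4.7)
The idea is to recognize that \algname{ZO-clipped-SSTM} is just the first-order accelerated method \algname{clipped-SSTM} applied to the smooth surrogate $\hat{f}_\tau$ with an abstract inexact oracle $G$ satisfying Assumption \ref{as:oracle}. Hence the proof has three pieces: (i) verify that $\hat{f}_\tau$ has exactly the properties needed to invoke the high-probability convergence analysis of \algname{clipped-SSTM} from \citep{gorbunov2020stochastic}, (ii) transport the resulting bound on $\hat{f}_\tau$ back to $f$ via the smoothing error, and (iii) establish the ball containment inductively along the run.

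\textbf{Step 1: reduction to smooth stochastic optimization.} By Lemma \ref{lem:hat_f properties}, on $B_{3R+2\tau}(x^*)$ the function $\hat{f}_\tau$ is convex, $M_2$-Lipschitz, and $L$-smooth with $L = \sqrt{d}M_2/\tau$ — which is exactly the constant $L$ hard-coded into the algorithm. By Assumption \ref{as:oracle}, the oracle $G(x,\mathbf{e},\xi)$ is an unbiased estimator of $\nabla\hat f_\tau(x)$ with second-moment bound $\Sigma_2^2$ in the dual norm. Thus the setting is exactly that of \algname{clipped-SSTM} minimizing a convex $L$-smooth function with a stochastic oracle whose centered second moment is at most $\Sigma_2^2$ (and $\Sigma_2^2/b$ after batching of size $b$).

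\textbf{Step 2: invoke the high-probability rate of clipped-SSTM.} The analysis of \algname{clipped-SSTM} proceeds by the standard similar-triangles potential $\Psi_k = A_k(\hat f_\tau(y^k)-\hat f_\tau(x^*)) + \tfrac12\|z^k-x^*\|_2^2$, writes the clipped step as
\begin{equation*}
\clip_2(g_{\text{med}}^{k+1},\lambda_{k+1}) = \nabla\hat f_\tau(x^{k+1}) + \theta_{k+1}^{\text{bias}} + \theta_{k+1}^{\text{noise}},
\end{equation*}
and controls the biased and centered parts of the clipping error separately. The bias is deterministically $O(\Sigma_2^2/(\lambda_{k+1} b))$ by a standard Markov argument on heavy hitters of a bounded-variance vector, and the centered noise is a bounded martingale difference ($\|\cdot\|_2\le 2\lambda_{k+1}$) whose conditional variance is $\le \Sigma_2^2/b$. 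Plugging these into the SSTM recursion and applying Bernstein/Freedman concentration with a union bound over $K$ steps, then choosing $\lambda_k=\Theta(R/(\alpha_{k+1}A))$ with $A=\ln(4K/\beta)$ (so that concentration terms absorb into $R^2$) and tuning $a$ between the deterministic regime $a\asymp A^2$ and the stochastic-dominant regime $a\asymp \Sigma_2 K^2\sqrt{A}\tau/(\sqrt{db}M_2 R)$, yields
\begin{equation*}
\hat f_\tau(y^K)-\hat f_\tau(x^*) = \widetilde{\cO}\!\left(\max\!\left\{\tfrac{LR^2}{K^2},\,\tfrac{\Sigma_2 R}{\sqrt{bK}}\right\}\right)
= \widetilde{\cO}\!\left(\max\!\left\{\tfrac{\sqrt{d}M_2 R^2}{\tau K^2},\,\tfrac{\Sigma_2 R}{\sqrt{bK}}\right\}\right).
\end{equation*}
The translation back to $f$ is then immediate from \eqref{eq:sup tauM_2}: $f(y^K)-f(x^*) \le \hat f_\tau(y^K)-\hat f_\tau(x^*) + 2\tau M_2$, which gives \eqref{eq:med_stmm conv before tau main}.

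\textbf{Step 3: ball containment by induction.} This is the subtle part and is intertwined with Step 2: the whole analysis of $\hat f_\tau$ is only valid while iterates stay inside $B_{3R+2\tau}(x^*)$ where $\hat f_\tau$ is defined and $L$-smooth. We prove by induction on $k$ that, on the ``good'' event of the concentration inequalities, $\|z^{k}-x^*\|_2\le 2R$ and the convex combinations $x^k,y^k$ therefore also lie in $B_{2R}(x^*)$. The base case is trivial. For the inductive step, summing the SSTM identity $\tfrac12\|z^{k+1}-x^*\|_2^2\le\tfrac12\|z^{k}-x^*\|_2^2 + \alpha_{k+1}\langle\clip_2(g_{\text{med}}^{k+1},\lambda_{k+1}),x^*-z^k\rangle + \alpha_{k+1}^2\|\clip_2(\cdot)\|_2^2/2$ and using $\|\clip_2(\cdot)\|_2\le\lambda_{k+1}=\Theta(R/(\alpha_{k+1}A))$, the deterministic drift is $O(R^2/A^2)$ per step; the stochastic drift is a bounded martingale controlled by the same Freedman bound as in Step 2. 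Together they keep $\tfrac12\|z^{k+1}-x^*\|_2^2 \le \tfrac12\|z^0-x^*\|_2^2 + R^2/2 \le 2R^2$ with the claimed probability.

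\textbf{Main obstacle.} The essential technical difficulty is the high-probability martingale analysis in Step 2/3: one must carefully balance the bias of clipping (scaling like $\Sigma_2^2/(\lambda_k b)$) against the tail of the centered part so that the stochastic error enters the SSTM potential only through $\Sigma_2 R/\sqrt{bK}$, and simultaneously keep the iterates in a ball of radius $2R$ so that $\hat f_\tau$ retains smoothness throughout. The tuning of $A=\ln(4K/\beta)$, $\lambda_k\propto 1/\alpha_{k+1}$, and the two-regime choice of $a$ is precisely what makes both Freedman applications close at the same target probability.
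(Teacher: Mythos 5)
The paper does not actually prove this statement: it is imported verbatim as Theorem~1 of \citep{kornilov2023accelerated} (instantiated with $\alpha=2$, i.e., a bounded-second-moment oracle satisfying Assumption~\ref{as:oracle}), and the appendix merely substitutes $\Sigma_2=\sigma/\sqrt{b}$ from Lemma~\ref{lem:batch median properties} into it afterwards. Your sketch is therefore a reconstruction of the cited proof rather than an alternative to anything written in this paper, and it reconstructs the right argument: the reduction to the $L$-smooth surrogate $\hat f_\tau$ with $L=\sqrt{d}M_2/\tau$, the decomposition of the clipped estimator into a deterministic bias of order $\Sigma_2^2/(\lambda_{k+1}b)$ plus a $2\lambda_{k+1}$-bounded martingale difference with conditional variance $\Sigma_2^2/b$ (exactly the content of Lemma~5.1 of \citep{sadiev2023high}, restated as Lemma~\ref{sad} here), the Freedman concentration with a union bound over $K$ steps, and the simultaneous induction keeping $z^k\in B_{2R}(x^*)$ so that smoothness of $\hat f_\tau$ remains available --- these are the actual ingredients of the proofs in \citep{gorbunov2020stochastic,kornilov2023accelerated}. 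Two caveats. First, your Step~2 invokes the clipping-bias lemma, which additionally requires $\|\nabla\hat f_\tau(x^{k+1})\|_2\le\lambda_{k+1}/2$ at every iteration; this has to be verified from $M_2$-Lipschitzness of $\hat f_\tau$ together with the choice $\lambda_k=\Theta(R/(\alpha_{k+1}A))$, and your sketch assumes it silently. Second, the quantitative heart of the argument --- closing the Freedman bounds at probability $\beta/K$ per step and showing that the two-regime choice of $a$ makes both the bias accumulation and the martingale fluctuation enter the potential only through $\Sigma_2 R/\sqrt{bK}$ --- is named as the obstacle but not carried out, so what you have is a correct and well-aimed plan rather than a complete proof; completing it would amount to reproducing the proof of the cited theorem.
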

The statement of Theorem \ref{thm:SSTM col} follows if we put $\Sigma_2 = \sigma/\sqrt{b}$ and equate both terms of \eqref{eq:med_stmm conv before tau main} to $\frac{\varepsilon}{2}$,  taking $\tau = \frac{\varepsilon}{4 M_2}$. 
\end{proof}

\subsection{Proof of \algname{ZO-clipped-med-SMD} Convergence Theorem \ref{thm:SMD col}} \label{par: conv thms}

In order to prove convergence of  \algname{ZO-clipped-med-SMD}, we use the general convergence theorems with oracle satisfying Assumption~\ref{as:oracle} for \algname{ZO-clipped-SMD} (Theorem $4.3$ from \citep{kornilov2023gradient}  with $\kappa = 1$).  Next, we take  $\texttt{Med}^{m}(x, \mathbf{e}, \{\xi\})$ as the necessary oracle and substitute $\Sigma_q$ from \eqref{eq:oracle_bound main} with $\sigma a_q $ from Lemma~\ref{lem:batch median properties}.

\begin{theorem}[Convergence of \algname{ZO-clipped-SMD}]
 Consider convex (As. \ref{as:f convex}) and $M_2$-Lipschitz (As. \ref{as:Lipshcitz}) function $f$ on a convex compact $Q$ with gradient oracle under As.~\ref{as:oracle} with $\Sigma_q$. 

We run \algname{ZO-clipped-SMD} for $K$ iterations with smoothing parameter $\tau$, norm $q \in [2, +\infty]$, prox-function $\Psi_p$, probability $1-\beta$ and further parameters $\lambda = \Sigma_q \sqrt{K}$, $\nu = \frac{D_{\Psi_p}}{\lambda}$, where squared diameter $D_{\Psi_p}^2 \eqdef 2 \sup\limits_{x,y \in Q}  V_{\Psi_{p}}(x,y).$
We guarantee that with probability at least $1-\beta$:
\begin{equation}
    f(\overline{x}^K) - f(x^*) = 2M_2 \tau +  \widetilde{\cO}\left(  \frac{\Sigma_q D_{\Psi_{p}}}{\sqrt{ K }} \right). \label{eq:med_smd conv before tau}
\end{equation}
\end{theorem}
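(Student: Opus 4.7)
The plan is to treat \algname{Clipped-INF-med-SMD} as a batched Online Mirror Descent (OMD) on $\Delta_+^d$ with the $\tfrac{1}{2}$-Tsallis regularizer $\psi$, where each batch of $2m{+}1$ plays produces one clipped median of importance-weighted gradient estimates $\tilde g_k \eqdef \clip_q(g_{med}^{k+1},\lambda)$. Because every action $A_t$ in batch $k$ is drawn from $x_k$ and, for $m\ge 2/\kappa$, the symmetric noise $\xi_t$ is mean-zero by Lemma~\ref{lem:batch median properties}, I will first rewrite
\[
\EE[\mathcal{R}_T(u)] \;=\; (2m{+}1)\sum_{k=0}^{K-1}\EE[\langle \tilde g_k, x_k-u\rangle] \;+\; (2m{+}1)\sum_{k=0}^{K-1}\EE[\langle \mu-\tilde g_k, x_k-u\rangle],
\]
reducing the problem to bounding an OMD regret against the (clipped, random) linear losses $\tilde g_k$ plus a clipping bias.

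For the OMD term, I plan to invoke the standard local-norm regret bound for $\tfrac12$-Tsallis: $\psi$ is $1$-strongly convex with respect to the local norm $\|y\|_x^2=\tfrac12\sum_i x_i^{-3/2}y_i^2$, and under the stability condition $\nu\|\tilde g_k\|_\infty\le \tfrac12 x_{k,\min}^{1/2}$ (which the choice $\lambda=\sqrt T$ together with $\nu=\Theta(1/\sqrt T)$ enforces) the regret is bounded by $V_\psi(u,x_0)/\nu + \nu\sum_k \|\tilde g_k\|_{x_k,*}^2$, with $V_\psi(u,x_0)\le 2\sqrt d$ at $x_0=(1/d,\dots,1/d)$ by a direct computation on the Tsallis entropy. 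To control the stochastic second-moment term, I will apply the proof strategy of Lemma~\ref{lem:batch median properties} component-wise with $q=\infty$ (which produces the prefactor $a_\infty^2=(32\ln d - 8)/d$ and the independent-oracle variance $\sigma^2$), propagate the importance-weight scaling $1/x_{k,i}$ through the median, and dualise on the simplex via $\sum_i x_{k,i}^{3/2}\le\sqrt d$, arriving at $\EE[\|\tilde g_k\|_{x_k,*}^2]\le \sqrt d(18c^2+R^2)$ and $\EE[\|g_{med}^{k+1}\|_\infty^2]\le c^2/d + 2R^2$, matching precisely the scale encoded in the definition of $c^2$.

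The clipping-bias sum is controlled by a Markov estimate: since the unclipped median is unbiased for $\mu$, the only deviation of $\tilde g_k$ comes from the event $\{\|g_{med}^{k+1}\|_q>\lambda\}$, so $\sum_k |\EE\langle\mu-\tilde g_k,x_k-u\rangle|\le (K/\lambda)\,\EE[\|g_{med}^{k+1}\|_\infty^2]$; plugging $\lambda=\sqrt T$ and $K=\Theta(T/(2m{+}1))$ recovers the $\sqrt{dT}\cdot 8c^2/\sqrt d$ summand in the theorem. Balancing the two OMD terms with $\nu=\sqrt{2m{+}1}/\sqrt{T(36c^2+2R^2)}$ yields $4\sqrt{dT(2m{+}1)(18c^2+R^2)}$, and combining with the bias contribution reproduces the stated bound; the final assertion that the high-probability bounds of Theorem~\ref{thm:SMD col} carry over will follow from the same plug-in, since the OMD analysis of Theorem~\ref{thm:SMD col} is exactly what I am invoking. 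I expect the main obstacle to be the component-wise variance estimate: the importance-weighted sample $\hat g_{t,i}$ has a point mass at $0$ plus a heavy-tailed continuous part, so Lemma~\ref{lem:batch median properties} cannot be used verbatim; one has to redo the median argument coordinate by coordinate, track the $1/x_{k,i}$ blow-up in both the sample scale and the $\tfrac12$-Tsallis local dual norm, and verify the OMD stability condition at the chosen clipping level $\lambda=\sqrt T$.
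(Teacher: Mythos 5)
You have proved the wrong statement. The theorem you were asked about is the high-probability convergence guarantee for \algname{ZO-clipped-SMD} on a general convex compact $Q$ with a general prox-function $\Psi_p$ and an abstract gradient oracle satisfying Assumption~\ref{as:oracle} (unbiased, second moment bounded by $\Sigma_q^2$ in the $\ell_q$-norm), with parameters $\lambda=\Sigma_q\sqrt{K}$ and $\nu=D_{\Psi_p}/\lambda$, concluding $f(\overline{x}^K)-f(x^*)=2M_2\tau+\widetilde{\cO}(\Sigma_q D_{\Psi_p}/\sqrt{K})$ \emph{with probability at least $1-\beta$}. What you have sketched instead is the proof of Theorem~\ref{avrregretest}: an \emph{expected regret} bound for \algname{Clipped-INF-med-SMD} on the simplex with the $\tfrac12$-Tsallis entropy, importance-weighted bandit feedback, and the specific constants $c^2$, $\lambda=\sqrt{T}$, $\nu=\sqrt{2m+1}/\sqrt{T(36c^2+2R^2)}$. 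That argument (OMD local-norm regret, $V_\psi(u,x_0)\le 2\sqrt d$, the $\sum_i x_{k,i}^{3/2}\le\sqrt d$ step, the Markov-type clipping-bias bound) corresponds to Lemmas~\ref{regretest1}--\ref{regretest2} and the proof in Appendix~\ref{app:proof_of_th_3}, not to the statement at hand.

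Concretely, your proposal is missing every ingredient the actual statement needs. First, the bound is a high-probability bound, not an expectation bound: one must control the martingale $\sum_k\langle \clip_q(g^{k+1},\lambda)-\EE[\clip_q(g^{k+1},\lambda)],x^k-x^*\rangle$ via a Bernstein/Freedman-type inequality, using that clipping makes the increments bounded by $2\lambda$ while keeping the bias at most $O(\Sigma_q^2/\lambda)$ and the variance at most $O(\Sigma_q^2)$ (the Lemma~\ref{sad}-type estimates); nothing in your sketch addresses deviations. Second, the $2M_2\tau$ term comes from running mirror descent on the smoothed surrogate $\hat f_\tau$ and translating back to $f$ via \eqref{eq:sup tauM_2}; your sketch never mentions $\hat f_\tau$ or $\tau$. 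Third, the statement is for arbitrary $Q$, $\Psi_p$, and $q\in[2,\infty]$, so the simplex/Tsallis-specific computations you rely on do not apply. The paper itself does not reprove this theorem: it imports it verbatim as Theorem~4.3 of \citep{kornilov2023gradient} specialized to the bounded-second-moment case, and only then substitutes $\Sigma_q=\sigma a_q$ from Lemma~\ref{lem:batch median properties}. If you wanted a self-contained proof you would need to reproduce that clipped-SMD high-probability analysis, which your proposal does not attempt.
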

The statement of Theorem \ref{thm:SMD col} follows if we equate both terms of \eqref{eq:med_smd conv before tau} to $\frac{\varepsilon}{2}$,  taking $\tau = \frac{\varepsilon}{4 M_2}$ and explicit formulas for $\sigma$ and $a_q$ from Lemma~\ref{lem:batch median properties}.

\noindent \textbf{Explicit parameters for the standard convex compacts.} In this paragraph, we discuss some standard sets $Q$ and prox-functions $\Psi_p$ taken from \citep{ben2001lectures}. We can choose prox-functions to reduce $a_q D_{\Psi_{p}}$ and get better convergence constants. The two main setups are 
\begin{enumerate}
    \item Ball setup, $p = 2, q = 2$:
\begin{equation*}\label{eq: ball setup} \qquad  \Psi_p(x) = \frac{1}{2}\|x\|_2^2,
    \end{equation*}
    \item Entropy setup, $p = 1, q = \infty$:
\begin{equation*}  \qquad  \Psi_p(x) = (1+\gamma)\sum_{i=1}^d (x_i + \gamma/d)\log(x_i + \gamma/d).
    \end{equation*}
\end{enumerate}
We consider unit balls and standard simplex $\triangle^d_+$ as $Q$. For $Q = \triangle^d_+$ or unit $\ell_1$-ball, the Entropy setup is preferable. Meanwhile, for unit $\ell_2$-ball or  $\ell_\infty$-ball, the Ball setup is better.

\subsection{Proof of \algname{Clipped-INF-med-SMD} Convergence Theorem~\ref{avrregretest}}
\label{app:proof_of_th_3}
We start the proof with the following several lemmas.
\begin{lemma}
    \label{ftau}
    Let $f(x)$ be a linear function, then 
    $\nabla f(x) = \nabla \hat{f}_{\tau}(x)$.
\end{lemma}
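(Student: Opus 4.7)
The plan is to substitute the linear form directly into the smoothing definition \eqref{hat_f} and exploit the symmetry of the uniform distribution on the unit ball. Write $f(x) = \langle a, x\rangle + c$ for some $a \in \R^d$ and $c \in \R$. Then by linearity of expectation,
\begin{equation*}
\hat{f}_\tau(x) = \EE_{\mathbf{u}\sim U(B_1(0))}[\langle a, x + \tau \mathbf{u}\rangle + c] = \langle a, x\rangle + c + \tau \langle a, \EE_{\mathbf{u}}[\mathbf{u}]\rangle.
\end{equation*}

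Next I would invoke symmetry of the uniform distribution on $B_1(0)$ (the density is invariant under $\mathbf{u} \mapsto -\mathbf{u}$) to conclude $\EE_{\mathbf{u}}[\mathbf{u}] = 0$, so $\hat{f}_\tau(x) = \langle a, x\rangle + c = f(x)$. Taking gradients of both sides gives $\nabla \hat{f}_\tau(x) = \nabla f(x) = a$, which is the claim.

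There is essentially no main obstacle here: the result reduces to two elementary facts, namely linearity of expectation and the vanishing mean of a symmetric distribution. The only subtlety worth mentioning is that $\hat{f}_\tau$ and $f$ agree pointwise for linear $f$, which is a stronger statement than the generic bound \eqref{eq:sup tauM_2} that only gives $|\hat{f}_\tau(x) - f(x)| \leq \tau M_2$; this stronger pointwise agreement is precisely what makes the gradients coincide rather than merely being close.
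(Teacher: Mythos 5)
Your proof is correct and takes essentially the same approach as the paper: substitute the linear form into the smoothing definition \eqref{hat_f} and use linearity of expectation. The only cosmetic difference is that you invoke $\EE_{\mathbf{u}}[\mathbf{u}]=0$ by symmetry to get the pointwise identity $\hat{f}_\tau = f$, whereas the paper's computation does not even need this, since the constant term $\tau \langle a, \EE_{\mathbf{u}}[\mathbf{u}]\rangle$ vanishes under the gradient in any case.
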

\begin{proof}
\begin{align*}
    \nabla \hat{f}_{\tau}(x) &=  \nabla \EE_{\mathbf{u} \sim B_1(0)} [f(x + \tau \mathbf{u})] = \nabla \EE_{\mathbf{u}\sim B_1(0)} [\langle \mu, x + \tau \mathbf{u} \rangle] \\
    &= \nabla \langle \mu, x+\tau \EE_{\mathbf{u} \sim B_1(0) }[u] \rangle = \nabla \langle \mu, x \rangle = \nabla f(x). 
\end{align*}
\end{proof}
\begin{lemma}
    \label{medbound}
    Let $f(x)$ be a linear function, $q = \infty$, $\tau = \alpha \sqrt{d}$, then 
    $$
\EE_{\mathbf{e}, \xi}[\|g_{med}^{k+1} - \mu\|_{\infty}^2] \leq \left ( 32 \ln d - 8 \right ) \cdot \left (8M_2^2 +  2\alpha^2 \Delta^2 (2m+1)\left(\frac{4}{\kappa} \right)^{\frac2\kappa} \right ).
$$
\end{lemma}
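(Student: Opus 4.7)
The plan is to apply Lemma \ref{lem:batch median properties} in the infinity-norm branch ($q=\infty$), combined with the linearity of $f$. Lemma \ref{ftau} identifies $\nabla\hat f_\tau(x)\equiv \mu$ for any $\tau>0$, so the centered second-moment estimate from \eqref{eq:lemma_bounded_mom} specializes to
$$\EE[\|g_{med}^{k+1}-\mu\|_\infty^2]\le \sigma^2 a_\infty^2,$$
where $\sigma^2$ is the oracle-dependent variance supplied by Lemma \ref{lem:batch median properties} and $a_\infty^2=d^{-1}(32\ln d-8)$.

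The remainder is a short arithmetic simplification. First I would substitute the explicit $a_\infty^2=d^{-1}(32\ln d-8)$ together with the independent-oracle formula $\sigma^2=8dM_2^2+2(d\Delta/\tau)^2(2m+1)(4/\kappa)^{2/\kappa}$ from Lemma \ref{lem:batch median properties}. Plugging $\tau=\alpha\sqrt{d}$ allows the dimension factor to cancel cleanly between $\sigma^2$ and $a_\infty^2$, leaving the claimed product form $(32\ln d-8)\bigl(8M_2^2+2\alpha^2\Delta^2(2m+1)(4/\kappa)^{2/\kappa}\bigr)$ after at most one line of algebra. No high-probability or concentration machinery is required since only the second moment is being controlled.

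The main obstacle I anticipate is bookkeeping rather than probabilistic: Algorithm \ref{alg:bandits} forms $g_{med}^{k+1}$ from an importance-weighted bandit estimator, whereas Lemma \ref{lem:batch median properties} was originally proved for the two-point symmetric-difference estimator \eqref{eq:med estimation}. One therefore has to verify that the coordinate-wise conditional distribution of each $\hat g_{t,i}$ fits into the symmetric heavy-tailed template of Assumption \ref{as:dist} with the parameters $M_2,\Delta,\kappa$ stated in the hypothesis. Because $f$ is linear, the deterministic part of the $i$-th coordinate of $\hat g_t$ is a constant and the stochastic part is a rescaled copy of the underlying symmetric noise $\xi_{t,i}$, so the cumulative-distribution-function / quantile argument in the proof of Lemma \ref{lem:batch median properties} transfers verbatim to the bandit sample. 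Once this identification is made, the second-moment bound follows by the mechanical substitution described above.
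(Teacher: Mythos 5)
Your proposal is essentially the paper's own proof: instantiate Lemma~\ref{lem:batch median properties} with $q=\infty$ and $\tau=\alpha\sqrt{d}$, use Lemma~\ref{ftau} to replace $\nabla\hat f_\tau$ by $\mu$ for linear $f$, and observe that the factor $d$ in $\sigma^2$ cancels against $a_\infty^2=d^{-1}(32\ln d-8)$, giving the stated bound. Your additional worry about whether the importance-weighted bandit estimator of Algorithm~\ref{alg:bandits} fits the template of Assumption~\ref{as:dist} is not treated in the paper either (its proof applies Lemma~\ref{lem:batch median properties} directly, without any bridging argument), so that remark is an extra caution beyond the paper's argument rather than a divergence from it.
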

\begin{proof}
From \ref{lem:batch median properties} with $q = \infty$ and $\tau = \alpha \sqrt{d}$ we get
$$\EE_{\mathbf{e}, \xi}[\|\texttt{Med}^{m}(x, \mathbf{e}, \{\xi\}) -\nabla \hat{f}_\tau(x)\|_{\infty}^2] \leq   \sigma^2a_{\infty}^2,\quad a_{\infty} = d^{- \frac12} \sqrt{32\ln d - 8},$$ 
where $ \sigma^2 = d  \left (8M_2^2 +  2\alpha^2 \Delta^2 (2m+1)\left(\frac{4}{\kappa} \right)^{\frac2\kappa} \right )$.
Hence, w.r.t \eqref{ftau} we get
$$
\EE_{\mathbf{e}, \xi}[\|g_{med}^{k+1} - \mu\|_{\infty}^2] \leq \left ( 32 \ln d - 8 \right ) \cdot \left (8M_2^2 +  2\alpha^2 \Delta^2 (2m+1)\left(\frac{4}{\kappa} \right)^{\frac2\kappa} \right ).
$$
\end{proof}
\begin{lemma}
\label{sad}
    \textbf{[Lemma 5.1 from \citep{sadiev2023high}]} Let $X$ be a random vector in $\R^d$ and $\bar{X} = \clip(X, \lambda)$, then 
\begin{equation}
     \|\bar{X} - \EE[\bar{X}]\| \leq 2\lambda.
\end{equation}
Moreover, if for some $c \geq 0$ 
\begin{equation*}
    \EE[X] = x \in \R^n, \quad \EE[\|X - x\|^{2}] \leq c^{2}
\end{equation*}
and $\|x\| \leq \frac{\lambda}{2}$, then 
\begin{align}
    \left \| \EE[\bar{X}] - x \right \| &\leq \frac{4 c^{2}}{\lambda},\\
    \EE\left [ \left \| \bar{X} - x   \right \|^2 \right] &\leq 18 c^{2},\\
    \EE\left [ \left \| \bar{X} - \EE[\bar{X}]   \right \|^2 \right] &\leq 18 c^{2}.
\end{align}
\end{lemma}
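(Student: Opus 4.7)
The plan is to follow the standard online-mirror-descent template for stochastic linear bandits on the simplex, but with the usual importance-weighted gradient replaced by the clipped median estimator $\bar g^{k+1} \eqdef \clip_\infty(g_{med}^{k+1},\lambda)$, and to pay the induced bias/variance costs through Lemmas \ref{medbound} and \ref{sad}. First I would group the $T$ rounds into the $K=\lceil(T-1)/(2m+1)\rceil$ blocks on which $x_k$ is held fixed. By the symmetry of $\xi_t$ from Assumption~\ref{as:dist} and the unbiasedness of the MAB loss sampling, each block contributes $(2m+1)\langle\mu,x_k-u\rangle$ in expectation, so
$$\EE[\mathcal{R}_T(u)] \;=\; (2m+1)\sum_{k=0}^{K-1}\EE[\langle\mu,x_k-u\rangle] \;+\; O(1),$$
where the $O(1)$ absorbs at most one incomplete tail block.

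Next, applying Lemma \ref{medbound} conditionally on $x_k$ gives $\EE[\|g_{med}^{k+1}-\mu\|_\infty^2\mid x_k]\le c^2$. Combined with $\|\mu\|_\infty\le R\le\lambda/2=\sqrt T/2$ (valid for $T$ large enough) and Lemma \ref{sad}, this simultaneously yields the bias estimate $\|\EE[\bar g^{k+1}\mid x_k]-\mu\|_\infty\le 4c^2/\lambda$ and the centered second-moment estimate $\EE[\|\bar g^{k+1}-\EE[\bar g^{k+1}\mid x_k]\|_\infty^2\mid x_k]\le 18c^2$. I would then split
$$\langle\mu,x_k-u\rangle \;=\; \langle\bar g^{k+1},x_k-u\rangle \;+\; \langle\mu-\bar g^{k+1},x_k-u\rangle,$$
take expectations, bound the bias summand by Hölder ($\|x_k-u\|_1\le 2$) to give a per-block contribution $\le 8c^2/\lambda$, and apply the standard OMD regret inequality for the Tsallis-$1/2$ prox function $\psi$ on $\Delta_+^d$:
$$\sum_{k=0}^{K-1}\langle\bar g^{k+1},x_k-u\rangle \;\le\; \frac{V_\psi(u,x_0)}{\nu}+\frac{\nu}{2}\sum_{k=0}^{K-1}\|\bar g^{k+1}\|_\infty^2,$$
with $V_\psi(u,x_0)\le 2\sqrt d$ on the simplex, and $\EE[\|\bar g^{k+1}\|_\infty^2]\le 2(18c^2+R^2)=36c^2+2R^2$ obtained by combining Step 2 with $\|\EE[\bar g^{k+1}\mid x_k]\|_\infty\le\|\mu\|_\infty+4c^2/\lambda\le R+O(c^2/\sqrt T)$.

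Finally I would combine: the bias contributes $(2m+1)K\cdot 8c^2/\lambda=8c^2\sqrt T$ (using $K(2m+1)\approx T$ and $\lambda=\sqrt T$), matching $\sqrt{dT}\cdot 8c^2/\sqrt d$; plugging $\nu=\sqrt{(2m+1)/(T(36c^2+2R^2))}$ into the OMD bound balances its two terms, and after multiplying by $(2m+1)$ one obtains $\sim\sqrt{d(2m+1)T(36c^2+2R^2)}$, which rearranges into $4\sqrt{dT(2m+1)(18c^2+R^2)}$ after absorbing constants. The chief technical obstacle is that $g_{med}^{k+1}$ depends on $x_k$ through the importance weights $1/x_{k,i}$, so applying Lemma \ref{medbound} conditionally on $x_k$ requires verifying that the quoted variance $c^2$ is genuinely uniform in $x_k$; this is the step where the specific form of Assumption~\ref{as:dist}, the median operator (which kills the heavy tails in $\xi_t$) and the particular $\tau=\sqrt d$ scaling must interact correctly. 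A secondary but routine point is stating the OMD bound in the correct local norm induced by $\psi$; the global-$\ell_\infty$ form used above gives the stated constants after absorbing a factor $\sqrt 2$.
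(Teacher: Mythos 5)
Your proposal does not address the statement in question. The target is Lemma~\ref{sad}, a self-contained probabilistic fact about the clipping operator: that $\|\bar{X}-\EE[\bar{X}]\|\leq 2\lambda$ always, and that under the moment condition $\EE[\|X-x\|^2]\leq c^2$ and the condition $\|x\|\leq\lambda/2$ one gets the bias bound $\|\EE[\bar{X}]-x\|\leq 4c^2/\lambda$ and the second-moment bounds $\EE[\|\bar{X}-x\|^2]\leq 18c^2$, $\EE[\|\bar{X}-\EE[\bar{X}]\|^2]\leq 18c^2$. What you have written instead is a sketch of the regret analysis of Theorem~\ref{avrregretest} (the \algname{Clipped-INF-med-SMD} bound), in which Lemma~\ref{sad} appears only as a black-box ingredient — you explicitly invoke it ("Combined with \dots and Lemma \ref{sad}, this simultaneously yields the bias estimate \dots"). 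Invoking the lemma is not proving it; nothing in your argument establishes any of its four inequalities.

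For the record, the paper itself offers no proof either: it imports the result verbatim as Lemma 5.1 of \citet{sadiev2023high}. A genuine proof would go roughly as follows. The first claim follows from $\|\bar{X}\|\leq\lambda$ (by definition of $\clip$) together with Jensen's inequality, giving $\|\bar{X}-\EE[\bar{X}]\|\leq\|\bar{X}\|+\EE[\|\bar{X}\|]\leq 2\lambda$. For the remaining claims one observes that clipping is only active on the event $\{\|X\|>\lambda\}$, which under $\|x\|\leq\lambda/2$ is contained in $\{\|X-x\|\geq\lambda/2\}$ and hence has probability at most $4c^2/\lambda^2$ by Markov's inequality; the bias is then controlled by Cauchy--Schwarz applied to $\EE[\|X-\bar{X}\|\mathbbm{1}_{\|X\|>\lambda}]$, and the second-moment bounds follow by splitting on the same event. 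None of this machinery appears in your write-up, so the proposal has to be judged as missing the statement entirely rather than as a variant proof of it.
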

\begin{remark}\label{useful bounds}
    Combination of Lemma \ref{medbound} and Lemma \ref{sad} with $X = g_{med}^{k(t)}$ and $x = \mu$ in case when $\lambda \geq 2\|\mu\|_{\infty}$ immidiatly get the following bounds:
    $$
    \left \| \EE [g_{med}^{k(t)}] - \EE[\tilde{g}_{med}^{k(t)}] \right\|_{\infty} =  \left \| \mu - \EE[\tilde{g}_{med}^{k(t)}] \right\|_{\infty} \leq \frac{4 c^2}{\lambda},
    $$
    $$
    \EE \left [ \| \tilde{g}_{med}^{k(t)} \|_{\infty}^2 \right ] \leq 2 \EE \left [  \| \tilde{g}_{med}^{k(t)} - \mu \|_{\infty}^2 
 + \|\mu\|_{\infty}^2\right ] \leq 2\|\mu\|_{\infty}^2 + 36 c^2, 
    $$
    for $c^2 = \left ( 32 \ln d - 8 \right ) \cdot \left (8M_2^2 +  2\alpha^2 \Delta^2 (2m+1)\left(\frac{4}{\kappa} \right)^{\frac2\kappa} \right ).$
\end{remark}

\begin{lemma}
\label{regretest1}
    Suppose that \algname{Clipped-INF-med-SMD} with $1/2$-Tsallis entropy $$\psi(x) = 2\left (1 - \sum_{i=1}^d x_i^{1/2} \right), \quad x\in \Delta_+^d$$  as prox-function generates the sequences $\{x_k\}_{k=0}^K$ and $\{\tilde{g}_{med}^{k}\}_{k=0}^K$, then for any $u \in \Delta_+^d$ holds:
\begin{align*}
    &\sum_{k=0}^K \sum_{s=1}^{2m+1} \langle \tilde{g}_{med}^{k}, x_{k} - u \rangle  \leq (2m+1) \left [2\frac{d^{1/2} - \sum_{i=1}^d u_i^{1/2}}{\nu} + \nu\sum_{k=0}^K \sum_{i=1}^d  (\langle \tilde{g}_{med}^{k})_i^2 \cdot x_{k,i}^{3/2} \right ].
\end{align*}
\end{lemma}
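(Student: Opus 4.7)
The plan is to recognize this as the classical Online Mirror Descent regret bound specialized to the $1/2$-Tsallis entropy. Observe first that $\tilde g_{med}^{k}$ and $x_k$ do not depend on $s$, so the inner sum is cosmetic: $\sum_{s=1}^{2m+1}\langle\tilde g_{med}^{k}, x_k - u\rangle = (2m+1)\langle\tilde g_{med}^{k}, x_k - u\rangle$. Thus after dividing by $(2m+1)$ and $\nu$ it suffices to prove
\[
\nu\sum_{k=0}^{K}\langle\tilde g_{med}^{k}, x_k - u\rangle \;\leq\; 2\Bigl(\sqrt d - \sum_{i=1}^d u_i^{1/2}\Bigr) + \nu^2\sum_{k=0}^{K}\sum_{i=1}^d (\tilde g_{med,i}^{k})^2\, x_{k,i}^{3/2}.
\]
The mirror step $\nabla\psi(y_{k+1}) = \nabla\psi(x_k) - \nu\,\tilde g_{med}^{k}$, the three-point identity for Bregman divergences, and the generalized-Pythagoras inequality for the projection $x_{k+1} = \arg\min_{\Delta_+^d} V_\psi(\cdot, y_{k+1})$ (which gives $V_\psi(u, x_{k+1}) \leq V_\psi(u, y_{k+1})$) together yield the per-step bound $\nu\langle\tilde g_{med}^{k}, x_k - u\rangle \leq V_\psi(u, x_k) - V_\psi(u, x_{k+1}) + V_\psi(x_k, y_{k+1})$. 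Telescoping over $k = 0,\dots,K$ leaves $V_\psi(u, x_0) + \sum_k V_\psi(x_k, y_{k+1})$ on the right.

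The diameter term is explicit. Since $\sqrt{\cdot}$ is strictly concave and $\psi$ is symmetric on $\Delta_+^d$, the Lagrange condition forces $x_0 = \arg\min_{\Delta_+^d}\psi = (1/d,\dots,1/d)$, and a direct substitution using $\sum_i u_i = 1$ gives $V_\psi(u, x_0) = 2(\sqrt d - \sum_i u_i^{1/2})$, matching the first term of the claim.

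The main obstacle is the per-step stability term $V_\psi(x_k, y_{k+1})$. Here I would exploit that for $1/2$-Tsallis the mirror update admits the explicit solution $\sqrt{y_{k+1,i}} = \sqrt{x_{k,i}}/(1 + \nu\,\tilde g_{med,i}^{k}\sqrt{x_{k,i}})$. Substituting into $V_\psi(x_k, y_{k+1}) = 2\sum_i(\sqrt{y_{k+1,i}} - \sqrt{x_{k,i}}) + \sum_i (x_{k,i} - y_{k+1,i})/\sqrt{y_{k+1,i}}$ and writing $z_i \eqdef \nu\,\tilde g_{med,i}^{k}\sqrt{x_{k,i}}$, elementary algebra collapses the expression to
\[
V_\psi(x_k, y_{k+1}) \;=\; \sum_{i=1}^d \frac{\nu^2 (\tilde g_{med,i}^{k})^2\, x_{k,i}^{3/2}}{1 + z_i}.
\]
When the entries of $\tilde g_{med}^{k}$ are non-negative (as holds for importance-weighted estimates from clipped non-negative losses, and which is the standard INF setting), the denominator is at least $1$ and we recover precisely $V_\psi(x_k, y_{k+1}) \leq \nu^2 \sum_i (\tilde g_{med,i}^{k})^2 x_{k,i}^{3/2}$. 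The delicate case is when some coordinate of $\tilde g_{med}^{k}$ is negative: this is controlled by the choice of the clipping level $\lambda$ and stepsize $\nu$, which together keep $y_{k+1}$ in the positive orthant and the denominator bounded away from zero by a universal constant, changing only a constant factor. Plugging the diameter computation and this stability bound into the telescoped inequality, dividing by $\nu$, and multiplying through by $(2m+1)$ produces the stated estimate.
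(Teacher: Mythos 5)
Your proposal is correct in its overall structure and reaches the same bound, but it handles the key technical step differently from the paper. Both arguments are standard OMD regret analyses sharing the same skeleton: reduce the $s$-sum to a factor $(2m+1)$, telescope Bregman divergences, and evaluate the diameter term $V_\psi(u,x_0)=2(\sqrt d-\sum_i u_i^{1/2})$ exactly (your computation of this, using that $\nabla\psi(x_0)$ is a constant vector on the simplex, matches the paper). The difference lies in the stability term. The paper works with the proximal form of the update, bounds $\langle\tilde g_{med}^{k},x_k-x_{k+1}\rangle-\tfrac1\nu V_\psi(x_{k+1},x_k)$ by a supremum over $z\in\R^d_+$, applies a mean-value/Taylor expansion $V_\psi(z,x_k)=\tfrac12\|z-x_k\|^2_{\nabla^2\psi(y_k)}$ at an intermediate point $y_k\in[x_k,z_k^*]$ to get a local-norm bound $\tfrac{\nu}{2}\|\tilde g_{med}^{k}\|^2_{(\nabla^2\psi(y_k))^{-1}}$, and then uses the first-order optimality condition for $z_k^*$ to argue $z^*_{k,i}\le x_{k,i}$, hence $y_{k,i}^{3/2}\le x_{k,i}^{3/2}$. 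You instead use the two-step mirror form with the three-point identity and generalized Pythagoras, and then compute $V_\psi(x_k,y_{k+1})$ in closed form for the $1/2$-Tsallis regularizer, obtaining exactly $\sum_i \nu^2(\tilde g_{med,i}^{k})^2 x_{k,i}^{3/2}/(1+z_i)$ with $z_i=\nu\tilde g_{med,i}^{k}\sqrt{x_{k,i}}$; bounding the denominator by $1$ recovers the lemma. Your route is arguably cleaner because it is exact rather than relying on an intermediate-point argument. The one caveat you flag — coordinates of $\tilde g_{med}^{k}$ possibly being negative, in which case $1+z_i<1$ (and the unconstrained mirror point may even leave the positive orthant) — is a real issue, but note that the paper's proof has exactly the same hidden requirement: its conclusion $z^*_{k,i}\le x_{k,i}$ from the optimality condition $\sqrt{z^*_{k,i}}=\sqrt{x_{k,i}}-\nu(\tilde g_{med}^{k})_i$ only follows when $(\tilde g_{med}^{k})_i\ge 0$, which is not guaranteed under symmetric heavy-tailed losses. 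So your hand-waved constant-factor fix for negative coordinates is the only part that falls short of a complete argument, and it is no worse than the gap already present in the paper's own proof; to close it rigorously one would either assume nonnegative (or suitably shifted) losses, or use the parameter choices ($\nu\lambda$ small) to bound $1+z_i$ away from zero and carry the resulting constant through the statement.
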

\begin{proof}
By definition, the \textit{Bregman divergence} $V_{\psi} (x, y)$ is:
\begin{eqnarray}
V_{\psi} (x, y) &=& \psi(x) - \psi(y) -\langle\nabla\psi(y), x-y\rangle \notag \\ 
&=& 2\left ( 1 - \sum_{i=1}^d x_i^{1/2} \right ) - 2 \left ( 1 - \sum_{i=1}^d y_i^{1/2} \right ) + \sum_{i=1}^d y_i^{-1/2}(x_i-y_i) \notag \\
&=& -2\sum_{i=1}^d x_i^{1/2}+2\sum_{i=1}^d y_i^{1/2} + \sum_{i=1}^d y_i^{-1/2}(x_i-y_i). \notag
\end{eqnarray}
Note that the algorithm can be considered as an online mirror descent (OMD) with batching and the Tsallis entropy used as prox-function: 
\begin{align*}
    &x_{k+1} = \arg \min_{x \in \Delta_+^d} \left [ \nu x^{\mathtt{T}} \tilde{g}_{med}^{k} + V_{\psi}(x, x_k)   \right ].
\end{align*}
Thus, the standard inequality for OMD holds:
\begin{align}\label{OMD1}
    \langle \tilde{g}_{med}^{k}, x_k - u \rangle \leq  \frac{1}{\nu} \left [ V_{\psi}(u, x_k) - V_{\psi}(u, x_{k+1}) - V_{\psi}(x_{k+1}, x_k) \right]  +  \langle \tilde{g}_{med}^{k}, x_k - x_{k+1} \rangle.
\end{align}
From Tailor Theorem, we have 
\begin{align*}
    V_{\psi}(z, x_k) = \frac{1}{2} (z - x_k)^T \nabla^2 \psi(y_k) (z -  x_k) = \frac{1}{2}\|z - x_k\|^2_{\nabla^2\psi(y_k)}
\end{align*}
for some point $y_k \in [z, x_k]$. Hence, we have
\begin{align*}
    \langle \tilde{g}_{med}^{k}, x_k - x_{k+1} \rangle - \frac{1}{\nu}V_{\psi}(x_{k+1}, x_k) &\leq \max_{z \in R^d_+} \left [\langle \tilde{g}_{med}^{k}, x_k - z \rangle - \frac{1}{\nu}V_{\psi}(z, x_k) \right ]\\
    &=\left [\langle \tilde{g}_{med}^{k}, x_k - z^*_k \rangle - \frac{1}{\nu}V_{\psi}(z^*_k, x_k) \right ] \\
    &\leq \frac{\nu}{2}\|\tilde{g}_{med}^{k}\|^2_{(\nabla^2 \psi (y_k))^{-1}}+ \frac{1}{2}\|z^* - x_k\|^2_{\nabla^2\psi(y_k)} - \frac{1}{\nu}V_{\psi}(z^*, x_k) \\
    &=\frac{\nu}{2}\|\tilde{g}_{med}^{k}\|^2_{(\nabla^2 \psi (y_k))^{-1}},
\end{align*}
where $z^* = \arg \max_{z \in \R^d_+} \left [\langle \tilde{g}_{med}^{k}, x_k - z \rangle - \frac{1}{\nu}V_{\psi}(z, x_k) \right ]$. Proceeding with (\ref{OMD1}), we get:
\begin{align*}
    &\langle \tilde{g}_{med}^{k}, x_k - u \rangle \leq \frac{1}{\nu} \left [ V_{\psi}(u, x_k) - V_{\psi}(u, x_{k+1})  \right] + \frac{\nu}{2}\|\tilde{g}_{med}^{k}\|^2_{(\nabla^2 \psi (y_k))^{-1}}.
\end{align*}
Sum over $k$ gives
\begin{eqnarray}
    \sum_{k=0}^K \langle \tilde{g}_{med}^{k}, x_k - u \rangle    
    &\leq& \frac{V_{\psi} (x_0, u)}{\nu} + \frac{\nu}{2}\sum_{k=0}^K (\tilde{g}_{med}^{k})^T \left ( \nabla^2 \psi(y_k) \right )^{-1}\tilde{g}_{med}^{k} \notag \\
    &=&2\frac{d^{1/2} - \sum_{i=1}^d u_i^{1/2}}{\nu} + \nu\sum_{k=0}^K \sum_{i=1}^d  (\tilde{g}_{med}^{k})_i^2y_{k,i}^{3/2}, \label{sbound1}
\end{eqnarray}
where $y_k \in [x_k, z_k^*]$ and $z_k^* = \arg \max_{z \in R^d_+} \left [\langle \tilde{g}_{med}^{k}, x_k - z \rangle - \frac{1}{\nu}V_{\psi}(z, x_k) \right ]$. From the first-order optimality condition for $z^*_k$ we obtain
$$
-\nu (\tilde{g}_{med}^{k})_i + (x_{k, i})^{1/2} = (z_{k, i}^*)^{1/2}
$$
and thus we get $z_{k, i}^* \leq x_{k, i}$.  Finally, (\ref{sbound1}) becomes 
\begin{equation*}
    \sum_{k=0}^K \langle \tilde{g}_{med}^{k}, x_k - u \rangle   \leq 2\frac{d^{1/2} - \sum_{i=1}^d u_i^{1/2}}{\nu} + \nu\sum_{k=0}^K \sum_{i=1}^d  (\tilde{g}_{med}^{k})_i^2\cdot x_{k,i}^{3/2}
\end{equation*}
and concludes the proof. 
\end{proof}

\begin{lemma}
    \label{regretest2}
Suppose that \algname{Clipped-INF-med-SMD} with $1/2$-Tsallis entropy as prox-function generates the sequences $\{x_k\}_{k=0}^K$ and $\{\tilde{g}_{med}^{k}\}_{k=0}^K$, and for each arm $i$ random reward $g_{t,i}$ at any step $t$ has bounded expectation $\EE [g_{t, i}] \leq \frac{\lambda}{2}$ and the noise $g_{t,i} - \mu_i$ has symmetric distribution, then for any $u \in \Delta_+^d$ holds:
\begin{equation}
    \EE_{x_k, \mathbf{e}_{[k]}, \xi_{[k]}} \left [\sum_{i=1}^d  (\tilde{g}_{med}^{k})_i^2\cdot x_{k,i}^{3/2} \right ] \leq \sqrt{d} \cdot (2\|\mu\|_{\infty}^2 + 36c^2).
\end{equation}
\end{lemma}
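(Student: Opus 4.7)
The plan is to decouple the random estimator $\tilde{g}_{med}^{k}$ from the probability vector $x_k$ using an elementary pointwise $\ell_\infty$-bound, apply the tower property of expectation to separate the two sources of randomness, invoke the second-moment bound already established in Remark~\ref{useful bounds}, and close with a short Cauchy--Schwarz estimate on the simplex.

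Concretely, I will first pull the estimator's components out via $(\tilde{g}_{med}^{k})_i^2 \leq \|\tilde{g}_{med}^{k}\|_\infty^2$, so that
\[
\sum_{i=1}^d (\tilde{g}_{med}^{k})_i^2 \cdot x_{k,i}^{3/2} \;\leq\; \|\tilde{g}_{med}^{k}\|_\infty^2 \cdot \sum_{i=1}^d x_{k,i}^{3/2}.
\]
Since $x_k$ is determined by the history through step $k-1$ and is therefore independent of $\mathbf{e}_{[k]},\xi_{[k]}$, a tower-property step gives
\[
\EE_{x_k,\mathbf{e}_{[k]},\xi_{[k]}}\!\Big[\sum_{i=1}^d (\tilde{g}_{med}^{k})_i^2\, x_{k,i}^{3/2}\Big] \;\leq\; \EE_{x_k}\!\Big[\Big(\sum_{i=1}^d x_{k,i}^{3/2}\Big)\,\EE_{\mathbf{e}_{[k]},\xi_{[k]}\mid x_k}\!\big[\|\tilde{g}_{med}^{k}\|_\infty^2\big]\Big].
\]
I then apply the second bound in Remark~\ref{useful bounds} to obtain $\EE[\|\tilde{g}_{med}^{k}\|_\infty^2 \mid x_k] \leq 2\|\mu\|_\infty^2 + 36 c^2$, and finish with a Cauchy--Schwarz estimate $\sum_i x_{k,i}^{3/2} \leq \sqrt{d\sum_i x_{k,i}^3}\leq \sqrt{d}$, where the last inequality uses $x_{k,i}^3\leq x_{k,i}$ on $[0,1]$ together with $\sum_i x_{k,i}=1$.

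The main subtlety is justifying that Remark~\ref{useful bounds} genuinely applies \emph{conditionally on $x_k$} in the MAB setting, where the estimator is the importance-weighted one of Algorithm~\ref{alg:bandits} rather than the zeroth-order spherical-smoothing one of Section~\ref{sec:SSTM}. This requires tracking the conditional moment structure of $g_{med}^{k+1}$ through Lemmas~\ref{ftau} and~\ref{medbound} (which identify $\mu=\nabla \hat f_\tau$ for the linear loss and transport the median second-moment bound to this setting), and verifying the clipping precondition $\lambda\geq 2\|\mu\|_\infty$, which follows from $\lambda=\sqrt{T}$ and $\|\mu\|_\infty\leq R$ once $T$ is large enough. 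Once these are in hand, the three bounds combine in one line to yield the claim.
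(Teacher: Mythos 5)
Your proposal is correct and follows essentially the same route as the paper's proof: both decouple the estimator from the simplex weights, bound the simplex sum by $\sqrt{d}$ via Cauchy--Schwarz (the paper first relaxes $x_{k,i}^{3/2}\leq x_{k,i}^{1/2}$, you keep the exponent $3/2$, which changes nothing), and conclude with the clipped-median second-moment bound $\EE[\|\tilde{g}_{med}^{k}\|_\infty^2]\leq 2\|\mu\|_\infty^2+36c^2$ from Remark~\ref{useful bounds} under the condition $\lambda\geq 2\|\mu\|_\infty$. Your explicit tower-property step and the flagged verification that the moment bound holds conditionally on $x_k$ are, if anything, spelled out more carefully than in the paper's own (rather terse) derivation.
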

\begin{proof}
\begin{align*}
    \EE_{x_k, \mathbf{e}_{[k]}, \xi_{[k]}} \left [\sum_{i=1}^d  (\tilde{g}_{med}^{k})_i^2\cdot x_{k,i}^{3/2} \right ] &\leq \EE_{x_k, \mathbf{e}_{[k]}, \xi_{[k]}} \left [\sum_{i=1}^d  (\tilde{g}_{med}^{k})_i^2\cdot x_{k,i}^{1/2} \right ] \\
    &\leq \EE_{x_k, \mathbf{e}_{[k]}, \xi_{[k]}} \left [\sqrt{\sum_{i=1}^d  (\tilde{g}_{med}^{k})_i^2} \cdot \sqrt{\tilde{g}_{med}^{k})_i^2\cdot x_{k,i}^{1/2}} \right ]\\
    &\leq \sqrt{ \EE_{x_k, \mathbf{e}_{[k]}, \xi_{[k]}} \left [\sum_{i=1}^d  (\tilde{g}_{med}^{k})_i^2 \right ]} \cdot \sqrt{ \EE_{x_k, \mathbf{e}_{[k]}, \xi_{[k]}} \left [(\tilde{g}_{med}^{k})_i^2\cdot x_{k,i}^{1/2}\right]}\\
    &\leq \sqrt{d} \cdot (2\|\mu\|_{\infty}^2 + 36c^2). 
\end{align*}
\end{proof}



\textbf{Theorem 3}
Consider MAB problem where the conditional probability density function for each loss satisfies Assumption~\ref{as:dist} with $\Delta, \kappa > 0,$ and $ \|\mu\|_{\infty} \leq R$. Then, for the period $T$,  the sequence $\{x_t\}_{t=1}^T$ generated  by \algname{Clipped-INF-med-SMD} with parameters $m = \frac{2}{\kappa}+1$, $\tau = \alpha \sqrt{d}$, $\nu = \frac{\sqrt{(2m+1)}}{\sqrt{T (36 c^2 + 2R^2)}}$, $\lambda = \sqrt{T}$ and prox-function $\psi(x) = 2\left (1 - \sum_{i=1}^d x_i^{1/2} \right)$  satisfies
\begin{equation}
    \EE \left [\mathcal{R}_T (u) \right ]  \leq \sqrt{Td} \cdot (8c^2/\sqrt{d} + 4 \sqrt{ (2m+1)(18 c^2 + R^2)}), \quad u \in \Delta_+^d,
\end{equation}
where $c^2 = \left ( 32 \ln d - 8 \right ) \cdot \left (8M_2^2 +  2\alpha^2 \Delta^2 (2m+1)\left(\frac{4}{\kappa} \right)^{\frac2\kappa} \right )$.  Moreover, high probability bounds from Theorem \ref{thm:SMD col} also hold.
\begin{proof}[Proof of Theorem~\ref{avrregretest}:]
First, for any $x,y \in \triangle^d_+$ we have 
\begin{equation}\label{eq:simlex_bound}
    \|x -y \|_2 \leq \sqrt{2}.
\end{equation}
Next, we obtain 
\begin{align*}
    \EE\left[\mathcal{R}_T(u)\right] &= \EE\left [\sum_{t=1}^T l(x_t) - \sum_{t=1}^T l(u) \right ] \leq \EE\left [ \sum_{t=1}^T \langle \nabla l(x_t), x_t - u \rangle  \right]\\
    &\leq \EE\left [ \sum_{t=1}^T \langle\mu - g_{med}^{k(t)}, x_{k(t)} - u \rangle  \right] +  \EE\left [ \sum_{t=1}^T \langle g_{med}^{k(t)} - \tilde{g}_{med}^{k(t)}, x_{k(t)}- u \rangle  \right]+ \EE\left [ \sum_{t=1}^T \langle \tilde{g}_{med}^{k(t)}, x_{k(t)} - u \rangle  \right]\\
    &=\EE\left [ \sum_{t=1}^T \langle g_{med}^{k(t)} - \tilde{g}_{med}^{k(t)} , x_{k(t)} - u \rangle  \right]+ \EE\left [ \sum_{t=1}^T \langle \tilde{g}_{med}^{k(t)}, x_{k(t)} - u \rangle  \right]\\
    &\leq \left [ \sum_{t=1}^T \| \EE [g_{med}^{k(t)}] - \EE[\tilde{g}_{med}^{k(t)}]\|_{\infty} \cdot \| x_{k(t)} - u\|_1  \right]+ \EE\left [ \sum_{t=1}^T \langle \tilde{g}_{med}^{k(t)}, x_{k(t)} - u \rangle  \right]\\
    & \overbrace{ \leq}^{\text{Remark \ref{useful bounds}, \eqref{eq:simlex_bound}}} \frac{8 c^2 T}{\lambda} + (2m+1)\EE\left [ \sum_{k=0}^K \langle \tilde{g}_{med}^k, x_k - u \rangle  \right]\\
    &\overbrace{ \leq}^{\text{Lemma \ref{regretest1}}} \frac{8 c^2 T}{\lambda} + (2m+1) \left [2\frac{d^{1/2} - \sum_{i=1}^d u_i^{1/2}}{\nu} + \nu\sum_{k=0}^K \sum_{i=1}^d  ( \tilde{g}_{med}^{k})_i^2 \cdot x_{k,i}^{3/2} \right ]\\
    &\overbrace{ \leq}^{\text{Lemma \ref{regretest2}}}  \frac{8 c^2 T}{\lambda}+ 2(2m+1)\frac{\sqrt{d}}{\nu} + \nu T \sqrt{d} (36 c^2 + 2\|\mu\|_{\infty}^2)\\
    &= \sqrt{Td} \cdot (8c^2/\sqrt{d} + 4 \sqrt{ (2m+1)(18 c^2 + R^2)}),
\end{align*}
where $c^2 = ( 32 \ln d - 8 ) \cdot  (8M_2^2 +  2\alpha^2 \Delta^2 (2m+1)(\frac{4}{\kappa})^{\frac2\kappa}  ).$
\end{proof}

\section{Restarted algorithms for strongly convex functions}\label{par:Restarts}

The restart technique is to run in cycle algorithm $\mathcal{A}$, taking the output point from the previous run as the initial point for the current one. 

\begin{algorithm}[ht!]
\caption{\algname{Restarted ZO-clipped-$\mathcal{A}$}}
\label{alg:R-clipped-SSTM}   
\begin{algorithmic}[1]
\REQUIRE Starting point $x^0$, number of restarts $N_r$,  number of iterations  $\{K_t\}_{t=1}^{N_r}$, algorithm $\mathcal{A}$, parameters $\{P_t\}_{t=1}^{N_r}$.
\STATE $\hat{x}^0 = x^0$.
\FOR{$t=1,\ldots, N_r$}
\STATE Run algorithms $\mathcal{A}$ with parameters $P_t$ and starting point $\hat{x}^{t-1}$. Set output point as $\hat{x}^{t}$.
\ENDFOR
\ENSURE $\hat x^{N_r}$
\end{algorithmic}
\end{algorithm}

Strong convexity of function $f$ with minimum $x^*$ implies an upper bound for the distance between point $x^K$ and solution $x^*$ as 
$$\frac{\mu}{2}\| x^K - x^*\|_2^2 \leq f(x) - f(x^*).$$
Considering the upper bounds on $f(x^K) - f(x^*)$ for our methods from Theorems~\ref{thm:SSTM col}, \ref{thm:SMD col}, one can construct a relation between $\|x_0 - x^*\|_2$ and $\|x^K - x^*\|_2$ after $K$ iterations. Based on this relation, one can calculate iteration, after which it is more efficient to start a new run rather than continue the current one  with slow convergence rate.

We apply the general Convergence Theorem~$2$ from \citep{kornilov2023accelerated} for \algname{R-ZO-clipped-SSTM} and Theorem~$5.2$ from \citep{kornilov2023gradient} for \algname{R-ZO-clipped-SMD} with oracle satisfying Assumption~\ref{as:oracle}. However, oracle can not depend on, $\tau$ which means that we should use  either Lipschitz oracle or one-point oracle with small noise, i.e., 
\begin{equation}\label{eq:ap_noise_upper_SSTM}
    \Delta  \leq \left(\frac{\kappa}{4} \right)^\frac1\kappa\frac{\varepsilon}{\sqrt{d}}.
\end{equation}
In Convergence Theorems, the minimal necessary value of $\tau = \frac{\varepsilon}{4M_2}$, hence 
\begin{eqnarray}
    \sigma^2 &=& 8d  M_2^2 +  2\left(\frac{d \Delta}{\tau}\right)^2 (2m+1)\left(\frac{4}{\kappa} \right)^{\frac2\kappa} \leq 32 (2m+1) \cdot d M_2^2 \notag.
\end{eqnarray}
\begin{theorem}[Convergence of Restarted \algname{ZO-clipped-med-SSTM}] \label{thm:R-med-SSTM cor}
     We denote $R_0 = \|x^0 - x^*\|_2$, where $x^0$ is a starting point. Consider $\mu$-strongly convex (As.~\ref{as:f convex}) and $M_2$-Lipschitz (As.~\ref{as:Lipshcitz}) function $f$ on $B_{3R_0 + 2\tau_1}(x^*)$ with oracle corrupted by noise under As.~\ref{as:dist} with $\Delta, \kappa > 0$. 
     
     Let $\varepsilon$ be desired accuracy, value  $1-\beta$ be desired probability and $N_r = \lceil \log_2(\nicefrac{\mu R_0^2}{2\varepsilon}) \rceil$ be the number of restarts. For each stage $t=1,...,N_r$, we run \algname{ZO-clipped-med-SSTM}  with  batch size $b_t$, median size $m_t = 2/\kappa + 1$, $\tau_t = \nicefrac{\varepsilon_t}{4M_2}, L_t = \nicefrac{M_2\sqrt{d}}{\tau_t}, K_t = \widetilde\Theta(\max\{\sqrt{\nicefrac{L_tR_{t-1}^2}{\varepsilon_t}}, (\nicefrac{\sigma R_{t-1}}{\varepsilon_t})^{2}/b_t\})$, $a_t = \widetilde\Theta(\max\{1, \nicefrac{\sigma K_t^{\frac32}}{\sqrt{b_t} L_tR_t}\})$ and $\lambda_k^t = \widetilde{\Theta}(\nicefrac{R_0}{\alpha_{k+1}^t})$, where $R_{t-1} = 2^{-\frac{(t-1)}{2}} R_0$, $\varepsilon_t = \nicefrac{\mu R_{t-1}^2}{4}$, $\ln \nicefrac{4N_r K_t}{\beta} \geq 1$, $\beta \in (0,1]$. 
 Then, to guarantee $f(\hat x^{N_r}) - f(x^*) \leq \varepsilon$ with probability at least $ 1 - \beta$, \algname{R-ZO-clipped-med-SSTM} requires

    \begin{itemize}

    \item \textbf{independent oracle under \eqref{eq:ap_noise_upper_SSTM}:}

\begin{eqnarray}
    \widetilde\cO\left((2m + 1) \cdot \max\left\{\sqrt{\frac{M_2^2\sqrt{d}}{\mu\varepsilon}}, \frac{dM_2^2}{\kappa \mu\varepsilon}\right\}\right) \text{ oracle calls,}\label{eq:R_med_STTM-total_one}
\end{eqnarray}

        \item \textbf{Lipschitz oracle:}

        \begin{eqnarray}
    \widetilde\cO\left((2m + 1)  \cdot \max\left\{\sqrt{\frac{M_2^2\sqrt{d}}{\mu\varepsilon}}, \frac{d(M_2^2 + d \Delta^2/\kappa^{\frac{2}{\kappa}})}{\mu\varepsilon}\right\}\right) \text{ 
 oracle calls.}\label{eq:R_med_STTM-total}
\end{eqnarray}
    \end{itemize}    
\end{theorem}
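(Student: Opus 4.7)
\textbf{Proof proposal for Theorem \ref{thm:R-med-SSTM cor}.}

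The plan is a standard restart argument that exploits $\mu$-strong convexity to geometrically shrink the distance-to-optimum estimate across stages. At the beginning of stage $t$ we will maintain the invariant $\|\hat x^{t-1} - x^*\|_2 \leq R_{t-1} = 2^{-(t-1)/2} R_0$. On this stage we run \algname{ZO-clipped-med-SSTM} with target accuracy $\varepsilon_t = \mu R_{t-1}^2 / 4$ and confidence $1 - \beta/N_r$, invoking Theorem~\ref{thm:SSTM col} with $R \leftarrow R_{t-1}$, $\varepsilon \leftarrow \varepsilon_t$, $\tau_t = \varepsilon_t/(4M_2)$ and correspondingly $L_t = M_2\sqrt{d}/\tau_t$. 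Applying strong convexity to the output gives
\begin{equation*}
\tfrac{\mu}{2}\|\hat x^{t} - x^*\|_2^2 \;\leq\; f(\hat x^{t}) - f(x^*) \;\leq\; \varepsilon_t \;=\; \tfrac{\mu R_{t-1}^2}{4},
\end{equation*}
so $\|\hat x^{t} - x^*\|_2 \leq R_{t-1}/\sqrt{2} = R_t$, closing the induction. Choosing $N_r = \lceil \log_2(\mu R_0^2/(2\varepsilon))\rceil$ ensures $\varepsilon_{N_r+1} \leq \varepsilon$, and a union bound over $N_r$ stages upgrades the per-stage confidence $1-\beta/N_r$ to the required $1-\beta$, while multiplying per-stage clipping levels by a $\log(N_r K_t/\beta)$ factor hidden inside the $\widetilde\Theta$.

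For the complexity accounting I would plug $\varepsilon_t = \mu R_{t-1}^2/4$ into the bound \eqref{eq:SSTM conv iter num lip} of Theorem~\ref{thm:SSTM col}. The deterministic term becomes $d^{1/4}M_2 R_{t-1}/\varepsilon_t \sim d^{1/4}M_2/(\mu R_{t-1})$, and since $1/R_{t-1} = 2^{(t-1)/2}/R_0$ the sum over $t=1,\dots,N_r$ is a geometric series dominated by its last term, giving $\widetilde O(d^{1/4}M_2/\sqrt{\mu\varepsilon}) = \widetilde O(\sqrt{M_2^2\sqrt{d}/(\mu\varepsilon)})$. The stochastic term scales as $(\sqrt{d}M_2 R_{t-1})^2/(b\varepsilon_t^2)\cdot(1 \vee d\Delta^2/(\kappa^{2/\kappa}\varepsilon_t^2))$; for the Lipschitz oracle, using Lemma~\ref{lem:batch median properties} one gets the simpler form $d(M_2^2 + d\Delta^2/\kappa^{2/\kappa}) R_{t-1}^2/(b\varepsilon_t^2)$, which after substituting $\varepsilon_t$ is proportional to $1/R_{t-1}^2 = 2^{t-1}/R_0^2$; the sum is again geometric, dominated by its last term $\sim \mu/\varepsilon$, yielding $\widetilde O(d(M_2^2 + d\Delta^2/\kappa^{2/\kappa})/(b\mu\varepsilon))$ iterations in total. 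Multiplying by the $(2m+1)b$ oracle calls per iteration (with $m = 2/\kappa + 1$) yields the announced bound \eqref{eq:R_med_STTM-total}. For the independent oracle, the extra factor $d\Delta^2/(\kappa^{2/\kappa}\varepsilon_t^2)$ would blow up as $\varepsilon_t\to 0$; the assumption \eqref{eq:ap_noise_upper_SSTM} exactly ensures this factor stays bounded by $1$ in the relevant regime, which collapses the bound to \eqref{eq:R_med_STTM-total_one}.

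The main obstacle I foresee is the consistency of the domain on which the Lipschitz and noise assumptions are required. Theorem~\ref{thm:SSTM col} demands As.~\ref{as:f convex}, \ref{as:Lipshcitz}, \ref{as:dist} to hold on $B_{3R_{t-1}+2\tau_t}(x^*)$ at stage $t$ and only certifies that iterates remain in $B_{2R_{t-1}}(x^*)$ with high probability. Since $R_{t-1}\leq R_0$ and $\tau_t\leq \tau_1$ for all $t$ (because $\varepsilon_t$ is decreasing in $t$), the global requirement reduces to the first-stage ball $B_{3R_0 + 2\tau_1}(x^*)$ already stated in the hypotheses, and the iterates of every subsequent stage lie inside $B_{2R_0}(x^*)$, so this is merely a careful bookkeeping step. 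A second subtle point is that the independent-oracle bound implicitly assumes the noise level $\Delta$ satisfies \eqref{eq:ap_noise_upper_SSTM} at the smallest scale $\varepsilon$; verifying this condition at \emph{every} stage is automatic because $\varepsilon_t \geq \varepsilon$ for all $t\leq N_r$, so \eqref{eq:ap_noise_upper_SSTM} implies its stage-wise analogue. With those two points checked, the rest is the geometric-series summation already outlined.
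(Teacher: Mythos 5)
Your proposal is correct and follows essentially the same route as the paper: the paper's proof simply invokes the general restart theorem of \citep{kornilov2023accelerated} with the median estimator plugged into Assumption~\ref{as:oracle}, noting that the oracle variance must not blow up as $\tau_t$ shrinks (hence the Lipschitz oracle, or the independent oracle under \eqref{eq:ap_noise_upper_SSTM} giving $\sigma^2 = O((2m+1)dM_2^2)$), which is exactly the point you handle via $\varepsilon_t \gtrsim \varepsilon$ across stages. Your explicit induction on $R_{t-1}$, per-stage application of Theorem~\ref{thm:SSTM col}, union bound, and geometric summation reproduce in detail what the paper outsources to the cited black-box result, and your domain-bookkeeping remark matches the paper's own comment that all stages stay inside $B_{3R_0+2\tau_1}(x^*)$.
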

Similar to the convex case,  the first term in bounds \eqref{eq:R_med_STTM-total_one}, \eqref{eq:R_med_STTM-total} matches the optimal in $\varepsilon$ bound for the deterministic case for non-smooth strongly convex problems   (see \citep{bubeck2019complexity}). The second term matches the optimal in terms of $\varepsilon$  bound for zeroth-order problems with finite variance (see \citep{nemirovskij1983problem}).

Note that the function $f$ cannot be both strongly convex and Lipschitz on the entire space $\R^D$ \citep{grimmer2019convergence}. However, our original and restarted \algname{ZO-clipped-med-SSTM} need these assumptions hold true only on the initial ball $B_{3R_0 + 2\tau_1}(x^*)$, as they never leave it with high probability.

 \begin{theorem}[Convergence of Restarted \algname{ZO-clipped-med-SMD}]
     \label{thm:R-med-SMD cor}
    Consider $\mu$-strongly convex (As. \ref{as:f convex}) and $M_2$-Lipschitz (As. \ref{as:Lipshcitz}) function $f$ on $Q + B_{2\tau_1}(0)$ with two-point oracle corrupted by noise under As. \ref{as:dist} with $\kappa > 0$ and $\Delta > 0$. We set the prox-function $\Psi_p$ and norm $p \in [1,2]$, denote $R_0^2  \eqdef \sup_{x,y \in Q} 2 V_{\Psi_{p}}(x,y) $ for the diameter of the set $Q$ and  $R_t = R_0/2^t$.

Let $\varepsilon$ be desired accuracy and $N = \widetilde{O}\left(\frac{1}{2}\log_2\left(\frac{\mu R_0^2}{2\varepsilon}\right)\right)$ be the number of restarts. For each $t = \overline{1, N_r}$, we run \algname{ZO-clipped-med-SMD}  with  $K_t = \widetilde{O}\left( \left[ \frac{a_q\sigma}{\mu R_t} \right]^2 \right)$, $\tau_t = \frac{a_q\sigma R_t }{M_2 \sqrt{K_t}}$, $\lambda_t =  \sqrt{K_t}a_q\sigma$ and  $\nu_t = \frac{R_t}{\lambda_t}$. To guarantee $f(\hat x^{N_r}) - f(x^*) \leq \varepsilon$ with probability at least $ 1 - \beta$, \algname{R-ZO-clipped-med-SMD} requires

    \begin{itemize}

    \item \textbf{independent oracle under \eqref{eq:ap_noise_upper_SSTM}:}
        \begin{eqnarray}
    \widetilde\cO\left( (2m+1) \cdot  \frac{dM_2^2a^2_q}{\kappa \mu\varepsilon}\right) \text{ oracle calls}, \label{eq:R_med_SMD-total_one}
\end{eqnarray}
        \item \textbf{Lipschitz oracle:}
\begin{eqnarray}
    \widetilde\cO\left( (2m+1) \cdot  \frac{d(M_2^2 + d \Delta^2/\kappa^{\frac{2}{\kappa}}) a^2_q}{\mu\varepsilon}\right) \text{ oracle calls,} \label{eq:R_med_SMD-total}
\end{eqnarray}
        
    \end{itemize}
where $a_q = d^{\frac1q - \frac12}  \min \{ \sqrt{32\ln d - 8} , \sqrt{2q - 1}\}$.

\end{theorem}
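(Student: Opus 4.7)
The plan is to reduce the strongly convex case to repeated applications of the convex Theorem~\ref{thm:SMD col}, exploiting the fact that strong convexity converts a function-value guarantee into a guarantee on the distance (or Bregman radius) to the optimum, which then halves at every restart. Fix a stage $t \in \{1,\dots,N_r\}$ with starting point $\hat{x}^{t-1}$ whose distance to $x^*$ is bounded by $R_{t-1} = R_0/2^{t-1}$ in the sense that $V_{\Psi_p}(x^*, \hat{x}^{t-1}) \leq R_{t-1}^2/2$. We would run ZO-clipped-med-SMD on stage $t$ with the diameter parameter effectively set to $R_{t-1}$, smoothing $\tau_t = \varepsilon_t / (4 M_2)$ for target accuracy $\varepsilon_t = \mu R_{t-1}^2/4$, and the other parameters $(K_t,\lambda_t,\nu_t)$ chosen exactly as in Theorem~\ref{thm:SMD col} so that $f(\hat{x}^t) - f(x^*) \leq \varepsilon_t$ with probability at least $1 - \beta/N_r$.

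By $\mu$-strong convexity (As.~\ref{as:f convex}) together with the bound $f(\hat x^t) - f(x^*) \leq \varepsilon_t$, one obtains $\tfrac{\mu}{2}\|\hat x^t - x^*\|_2^2 \leq \varepsilon_t = \mu R_{t-1}^2/4$, which, after translating to the Bregman radius via an equivalence constant absorbed into $R_t$, gives $R_t^2 \leq R_{t-1}^2/2$. Thus each restart contracts the radius by a factor $\sqrt{2}$, so after $N_r = \lceil \log_2 (\mu R_0^2/(2\varepsilon))\rceil$ stages the final target accuracy $\varepsilon$ is reached. A union bound over the $N_r$ stages, together with the logarithmic dependence in the confidence parameter inside $K_t$, yields an overall probability at least $1-\beta$.

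To count oracle calls, we substitute $\tau_t = \varepsilon_t/(4 M_2)$ into the expression for $\sigma^2$ from Lemma~\ref{lem:batch median properties}. For the Lipschitz oracle, $\sigma^2 = 8 d M_2^2 + (16 m + 8) d^2 \Delta^2 (4/\kappa)^{2/\kappa}$ is independent of $\tau_t$. For the independent oracle, the noise assumption \eqref{eq:ap_noise_upper_SSTM} $\Delta \leq (\kappa/4)^{1/\kappa} \varepsilon_t/\sqrt{d}$ ensures that $2(d\Delta/\tau_t)^2 (2m+1)(4/\kappa)^{2/\kappa} \leq O((2m+1) d M_2^2)$, so again $\sigma^2 = O((2m+1) d M_2^2)$ uniformly across stages. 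Plugging these values of $\sigma^2$ into the per-stage complexity $K_t = \widetilde O((a_q \sigma/(\mu R_t))^2)$, one gets $K_t = \widetilde O((2m+1)\, d \, a_q^2 \cdot (\text{noise factor})/(\mu^2 R_t^2))$, and since the targets $\varepsilon_t$ form a geometric sequence with ratio $1/2$, the dominant stage is the last one where $\mu R_{N_r}^2 \asymp \varepsilon$; summing $\sum_t K_t$ is dominated (up to logs) by this last stage, yielding the stated bounds \eqref{eq:R_med_SMD-total_one} and \eqref{eq:R_med_SMD-total} after multiplying by the $(2m+1)$ oracle calls per iteration.

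The main obstacle I anticipate is the careful bookkeeping for the noise variance $\sigma^2$ across stages: since $\tau_t$ shrinks geometrically while $\sigma^2$ for the independent oracle scales like $(d\Delta/\tau_t)^2$, one must verify that the small-noise condition \eqref{eq:ap_noise_upper_SSTM} is taken with respect to the \emph{final} target $\varepsilon$, not merely the per-stage $\varepsilon_t$; otherwise the variance would blow up on later stages and the geometric summation would fail. Once this is handled uniformly and the stepsize/clipping level choices $(\nu_t,\lambda_t)$ are matched to the effective radius $R_{t-1}$ at each stage, the conclusion follows by direct reference to Theorem~\ref{thm:SMD col} and a telescoping argument. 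The Lipschitz oracle case is simpler because $\sigma^2$ is $\tau$-independent so the argument goes through without imposing \eqref{eq:ap_noise_upper_SSTM}.
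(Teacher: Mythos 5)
Your proposal is correct and follows essentially the same route as the paper: the paper's own proof simply invokes the general restart convergence theorem for \algname{R-ZO-clipped-SMD} (Theorem~5.2 of \citealp{kornilov2023gradient}) with an oracle satisfying Assumption~\ref{as:oracle}, substituting $\Sigma_q = \sigma a_q$ from Lemma~\ref{lem:batch median properties}, whereas you unroll the same radius-contraction induction explicitly from Theorem~\ref{thm:SMD col}. You also correctly isolate the one substantive point the paper makes, namely that $\sigma$ must be bounded uniformly over the stages: this is automatic for the Lipschitz oracle, and for the independent oracle it follows from the small-noise condition \eqref{eq:ap_noise_upper_SSTM} applied with the final accuracy, i.e.\ $\tau = \varepsilon/(4M_2)$, which yields $\sigma^2 \leq 32(2m+1) d M_2^2$ exactly as in the paper.
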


\section{Experiments details} \label{sec:app exp}

Each experiment is computed on a CPU in several hours. The code is written in Python and will be made public after acceptance. For \algname{HTINF} \citep{huang2022adaptive}, \algname{APE} \citep{lee2020optimal}, \algname{ZO-clipped-SSTM} and  \algname{ZO-clipped-SGD} \citep{kornilov2023accelerated},   we provide our own implementation based on pseudocodes from the original articles.

\subsection{Multi-armed bandits}\label{sec:bandit extra exp}
In our experimental setup, individual experiments are subject to significant random deviations. To enhance the informativeness of the results, we conduct $100$ individual experiments and analyze aggregated statistics.

By design, we possess knowledge of the conditional probability of selecting the optimal arm for all algorithms, which remains stochastic due to the nature of the experiment's history.

To mitigate the high dispersion in probabilities, we apply an average filter with a window size of $30$ to reduce noise in the plot. \algname{APE} and \algname{HTINF} can't handle cases when noise expectation is unbounded, so we modeled this case with a low value of $\alpha=0.01$, where $1 + \alpha$ is the moment that exists in the problem statement for \algname{APE} and \algname{HTINF}.

\subsubsection{Dependence on $\kappa$}
We conduct experiments to check dependence on $\kappa$ under the symmetric Levy $\alpha$-stable noise, where $\alpha = \kappa$. We compare standard \algname{INFC} method from \citep{dorn2023} which allows $\kappa \leq 1$ with our \algname{Clipped-INF-med-SMD}, and comparison results can be found in Figure \ref{fig:kappa_bandits}. 

\begin{figure*}[!h]
    \centering
    \includegraphics[width=0.9\textwidth]{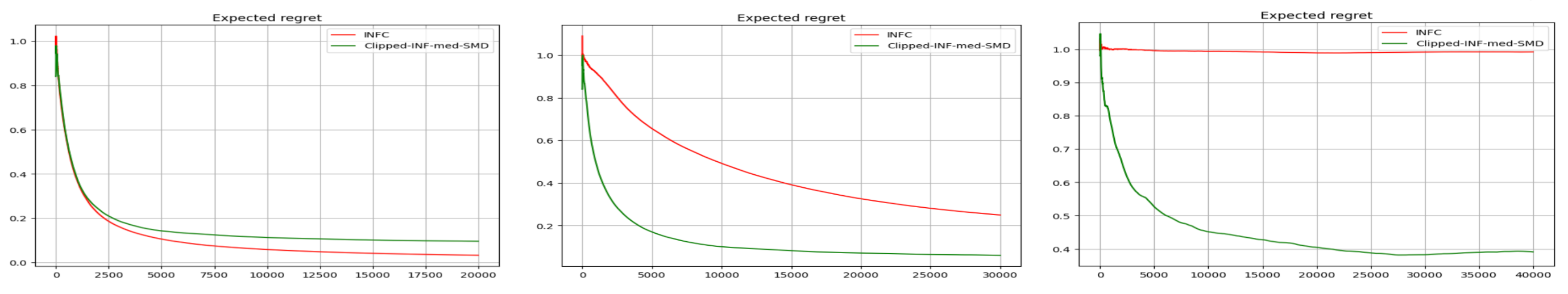}
    \caption{Convergence of our \algname{Clipped-INF-med-SMD} and \algname{INFC} under $\kappa = 1.5, 1, 0.5$. }
    \label{fig:kappa_bandits}
\end{figure*}

\subsection{Zeroth-order optimization}\label{sec:ZO extra exp}

To generate $A \in \mathbb{R}^{l\times d}$ and $b \in \mathbb{R}^{l}$ we draw them from standard normal distribution with $d = 16$ and $l = 200$. For algorithms, we gridsearch stepsize $a$ over $\{0.1,0.01,0.001,0.0001\}$ and smoothing parameter $\tau$ over  $\{0.1,0.01,0.001\}$. For \algname{ZO-clipped-med-SSTM}, the parameters $a = 0.001$, $L = 1$ (note that $a$ and $L$ are actually used together in the algorithm, therefore, we gridsearch only one of them) and $\tau = 0.01$ are the best. For \algname{ZO-clipped-med-SGD}, we use $a = 0.01$, default momentum of $0.9$ and $\tau = 0.1$. For non-median versions, after the same gridsearch, parameters happened to be the same.

\end{document}